\documentclass[11pt,english]{article}
\usepackage[utf8]{inputenc}
\usepackage[T1]{fontenc}
\usepackage{babel}
 \usepackage{amsmath}
 \usepackage{mathtools}
\usepackage{amsfonts}
\usepackage{amssymb}
\usepackage{graphicx}
\usepackage{amsthm}
\usepackage{bbm}
\usepackage{mathtools}
\usepackage[colorlinks=true,
            linkcolor=blue,
            urlcolor=blue,
            citecolor=blue]{hyperref}
\usepackage{authblk}
%\definecolor {processblue}{cmyk}{0.96,0,0,0}

\usepackage{geometry}
\usepackage{algorithm}
\usepackage[noend]{algpseudocode}
\usepackage[vcentermath]{youngtab}
\usepackage{pgfplots}
\pgfplotsset{compat=newest}
\usetikzlibrary{calc}
\usepackage{mathrsfs}
\geometry{hmargin=1.5cm,vmargin=3cm}

\DeclarePairedDelimiter\ceil{\lceil}{\rceil}
\DeclarePairedDelimiter\floor{\lfloor}{\rfloor}
 \author{Mohamed Slim Kammoun
 \thanks{Partially supported  by the Labex CEMPI ANR-11-LABX-0007-01.}}
\affil{Laboratoire Paul Painlevé, Université de Lille}
\providecommand{\keywords}[1]{\textbf{\textit{Keywords:}} #1}
\title{Monotonous subsequences and  the descent process of invariant random permutations}
\newtheorem{theorem}{Theorem}
\newtheorem{corollary}[theorem]{Corollary}
\newtheorem{lemma}[theorem]{Lemma}
\newtheorem{proposition}[theorem]{Proposition}
\newtheorem{conjecture}[theorem]{Conjecture}
\theoremstyle{definition}
\newtheorem{definition}[theorem]{Definition}
\usepackage{tikz}
\usetikzlibrary{automata, positioning}
\usepackage{natbib}
\bibliographystyle{abbrvnat}
\begin{document}
\maketitle

\begin{abstract} 
It is known from the work of \cite*{Baik1999} that we have Tracy-Widom  fluctuations for the longest increasing subsequence of  uniform permutations. In this paper, we prove that this result holds also in the case of the Ewens distribution and more generally for a class of random permutations with distribution invariant under conjugation. Moreover, we obtain the convergence of the first components of the associated Young tableaux  to the Airy Ensemble as well as the global convergence to the Vershik-Kerov-Logan-Shepp shape. Using similar techniques, we also prove that the limiting descent process of a large class of random permutations is stationary, one-dependent and determinantal. 
\end{abstract}
\keywords{Descent process, determinantal point processes,  longest increasing subsequence, random permutations, Robinson-Schensted correspondence,  Tracy-Widom distribution.}
\section{Introduction and statement of results}
\subsection{Monotonous subsequences}
\paragraph*{}
Let $\mathfrak{S}_n$ be the symmetric group, namely the  group of permutations of $\{1,\dots,n\}$. Given $\sigma \in \mathfrak{S}_n$, a subsequence $(\sigma(i_1),\dots,\sigma(i_k))$ is an increasing (resp. decreasing) subsequence of $\sigma$ of length $k$ if $i_1<i_2<\dots<i_k$ and $\sigma(i_1)<\dots<\sigma(i_k)$ (resp. $\sigma(i_1)>\dots>\sigma(i_k)$). We denote by $\ell(\sigma)$  (resp. $\underline{\ell}(\sigma)$) the length of the longest increasing (resp. decreasing) subsequence of $\sigma$. For example, for the permutation \begin{equation*}\sigma=\begin{pmatrix}
 1& 2 & 3 & 4 & 5 \\ 
 5& 3 & 2 & 1 & 4 
\end{pmatrix},\end{equation*} we have $\ell(\sigma)=2$ and $\underline{\ell}(\sigma)=4$. % $\ell(\sigma)$  represents also the number of piles of patience sorting of the permutation $\sigma$ \citep*{Aldous1999}.
The study of the limiting behaviour of $\ell(\sigma_n)$ when $\sigma_n$ is a uniform random permutation is known as {Ulam's problem}: \cite{MR0129165} conjectured that the limit
\begin{equation*}
\lim_ {n\to \infty} \frac{\mathbb{E}(\ell(\sigma_n))}{\sqrt{n}}
\end{equation*}
exists. \cite{MR0480398} proved that this limit is equal to $2$. The  asymptotic fluctuations were studied by Baik, Deift and Johansson. They proved the following result:   
\begin{theorem} \citep*{Baik1999} \label{dbj}
If $\sigma_n$ is a  random permutation with the uniform distribution on $\mathfrak{S}_n$ then
\begin{equation*} 
\lim_{n \to \infty} \mathbb{P}\left(\frac{\ell(\sigma_n)-2\sqrt{n}}{n^\frac 16}\leq s\right)=F_2(s),
\end{equation*}
where  $F_2$ is  the cumulative distribution function of the  Tracy-Widom distribution.  %for $\beta=2$  defined as the Fredholm determinant of the Airy kernel.
\end{theorem}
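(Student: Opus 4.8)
The plan is to reduce the statement to an asymptotic question about Plancherel measure via the Robinson–Schensted correspondence, pass to a Poissonized model in which an exactly solvable (determinantal) structure appears, analyse the governing kernel at its spectral edge, and finally de-Poissonize. \emph{Step~1 (Robinson–Schensted and Plancherel measure).} The Robinson–Schensted correspondence is a bijection $\sigma\mapsto(P,Q)$ from $\mathfrak{S}_n$ onto pairs of standard Young tableaux of a common shape $\lambda\vdash n$, and by Schensted's theorem $\ell(\sigma)$ equals the length $\lambda_1$ of the first row. Hence, when $\sigma_n$ is uniform, the shape follows the Plancherel measure $\mathbb{P}_n(\{\lambda\})=(f^\lambda)^2/n!$, where $f^\lambda$ is the number of standard Young tableaux of shape $\lambda$, so that
\begin{equation*}
\mathbb{P}\bigl(\ell(\sigma_n)\le r\bigr)\;=\;\sum_{\lambda\vdash n,\ \lambda_1\le r}\frac{(f^\lambda)^2}{n!}.
\end{equation*}
\emph{Step~2 (Poissonization).} Let $N$ be a Poisson$(\theta^2)$ variable and, given $N$, let $\sigma_N$ be uniform on $\mathfrak{S}_N$; put $L_\theta=\ell(\sigma_N)$. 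The shape of the associated tableaux then follows the Poissonized Plancherel measure $\lambda\mapsto e^{-\theta^2}\theta^{2|\lambda|}(f^\lambda/|\lambda|!)^2$, and equivalently $L_\theta$ is the length of the longest up-right chain among the points of a Poisson process of intensity $\theta^2$ on the unit square.

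\emph{Step~3 (Determinantal structure).} Encode a partition $\lambda$ by the configuration $\mathfrak{F}(\lambda)=\{\lambda_i-i+\tfrac12:\ i\ge1\}\subset\mathbb{Z}+\tfrac12$. Under the Poissonized Plancherel measure, $\mathfrak{F}(\lambda)$ is a determinantal point process whose correlation kernel is the discrete Bessel kernel $\mathbf{J}_\theta$ (Borodin–Okounkov–Olshanski, Johansson; a special case of Okounkov's Schur measures). Since $\{\lambda_1\le r\}$ is the event that $\mathfrak{F}(\lambda)$ has no point in $(r-\tfrac12,\infty)$, this gives $\mathbb{P}(L_\theta\le r)=\det\bigl(I-\mathbf{J}_\theta\bigr)$ as a Fredholm determinant on $\ell^2(\{r+\tfrac12,r+\tfrac32,\dots\})$. (The original route of Baik–Deift–Johansson instead starts from Gessel's identity, which writes $e^{\theta^2}\mathbb{P}(L_\theta\le r)$ as a Toeplitz determinant with symbol $e^{\theta(z+z^{-1})}$, hence as a Riemann–Hilbert problem for the associated orthogonal polynomials on the unit circle.)

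\emph{Step~4 (Edge scaling).} Set $r=2\theta+u\,\theta^{1/3}$. The heart of the proof is the asymptotic statement that, as $\theta\to\infty$, the kernel $\mathbf{J}_\theta$, after the appropriate conjugation and rescaling by $\theta^{1/3}$ around the location $2\theta$, converges uniformly on compact sets to the Airy kernel
\begin{equation*}
\mathbf{A}(x,y)\;=\;\int_0^{\infty}\mathrm{Ai}(x+t)\,\mathrm{Ai}(y+t)\,dt,
\end{equation*}
together with the corresponding trace-class convergence of Fredholm determinants, so that $\mathbb{P}(L_\theta\le 2\theta+u\,\theta^{1/3})\to\det(I-\mathbf{A})|_{L^2(u,\infty)}=F_2(u)$. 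In the Riemann–Hilbert formulation this is exactly the steepest-descent analysis producing the Hastings–McLeod solution of Painlevé~II in Tracy and Widom's formula for $F_2$. Controlling this critical scaling — the uniform asymptotics of the kernel, or equivalently of the Toeplitz/Riemann–Hilbert problem — is the main obstacle; the remaining steps are comparatively soft.

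\emph{Step~5 (De-Poissonization).} Since $(\ell(\sigma_n))_n$ can be coupled to be non-decreasing (for instance by inserting the value $n+1$ at a uniformly chosen position), the sequence $q_n(r)=\mathbb{P}(\ell(\sigma_n)\le r)$ is non-increasing in $n$, and Johansson's de-Poissonization lemma yields, for suitable $\theta_\pm$ with $\theta_\pm^2=n\pm O(\sqrt{n\log n})$,
\begin{equation*}
\mathbb{P}\bigl(L_{\theta_-}\le r\bigr)-o(1)\;\le\;\mathbb{P}\bigl(\ell(\sigma_n)\le r\bigr)\;\le\;\mathbb{P}\bigl(L_{\theta_+}\le r\bigr)+o(1).
\end{equation*}
Specializing $r=\lfloor 2\sqrt{n}+s\,n^{1/6}\rfloor$ and using $2\theta_\pm=2\sqrt{n}+O(\sqrt{\log n})$ together with $\theta_\pm^{1/3}=n^{1/6}(1+o(1))$, we may write $r=2\theta_\pm+u_\pm\,\theta_\pm^{1/3}$ with $u_\pm\to s$; hence both bounds converge to $F_2(s)$ by Step~4 and the continuity of $F_2$, which is the assertion.
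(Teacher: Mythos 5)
The paper does not prove this statement: Theorem \ref{dbj} is quoted from \cite{Baik1999} and used as a black box (the paper's own contribution is to transfer it to non-uniform, conjugation-invariant measures via a coupling argument). So there is no internal proof to compare yours against; what can be assessed is whether your outline is a correct account of the known proof.

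As a roadmap it is accurate. Steps 1, 2 and 5 are correct and essentially complete: Schensted's theorem reduces the problem to the first row under the Plancherel measure, the Poissonized measure is the right exactly solvable object, the monotone coupling (inserting the value $n+1$ at a uniform position) does make $q_n(r)$ non-increasing in $n$, and Johansson's de-Poissonization lemma with $\theta_\pm^2=n\pm O(\sqrt{n\log n})$ correctly recovers the fixed-$n$ statement, since the shift $2\theta_\pm-2\sqrt n=O(\sqrt{\log n})=o(n^{1/6})$ is absorbed into $u_\pm\to s$. You also correctly identify that there are two standard routes through Steps 3--4: the original Gessel--Toeplitz--Riemann--Hilbert analysis of Baik, Deift and Johansson, and the later discrete Bessel kernel route of Borodin--Okounkov--Olshanski and Johansson (the latter being what the paper's Theorem \ref{BOOJ} rests on). However, Step 4 --- the uniform edge asymptotics of the kernel (equivalently, the steepest-descent analysis of the Riemann--Hilbert problem producing the Painlev\'e II/Airy description of $F_2$), together with the trace-class control needed to pass to the limit in the Fredholm determinant --- is the entire analytic content of the theorem, and you assert it rather than prove it. As submitted, the proposal is therefore a correct and well-organized reduction of the theorem to its known hard core, not a self-contained proof; if the intent is to cite that core (as this paper itself does), the argument is fine, but if the intent is to prove the theorem, Step 4 is the gap.
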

The Tracy-Widom distribution appears  in many problems of  random growth, integrable probability  and as the distribution of the rescaled largest eigenvalue of many models of random matrices \citep*{doi:10.1142/S2010326311300014,2012arXiv1212.3351B}. $F_2$ can be expressed as the Fredholm determinant of the Airy kernel on $L^2(s,\infty)$,  as well as in terms of the Hastings-McLeod solution of the   Painlevé  II equation \citep*{tracy1994}. Those problems are known as a part of the {Kardar-Parisi-Zhang dimension  1+1 universality class}. Apart the uniform case, \cite{Mueller2013} studied the longest increasing subsequence for Mallows distribution.
\paragraph*{}
This work's first aim is to study  the limiting behaviour of other distributions of random permutations, in particular,  to prove  a similar result to that of Baik, Deift and Johansson (Theorem \ref{dbj}). % for a larger class of random permutations.
More precisely, we are interested  in a class of random permutations which are stable under conjugation for which we provide a  sufficient condition to obtain  the Tracy-Widom fluctuations. It includes  the  Ewens distributions and other distributions appearing in genetics, random fragmentations and coagulation processes \citep*{EWENS197287,10.2307/2984986,Kingman1,9780521867283}.
\paragraph*{} For the  remainder of this article, we denote by $(\sigma_n)_{n\geq 1}$  a sequence of random permutations with joint distribution $\mathbb{P}$ such that for all positive integer $n$,  $\sigma_n \in \mathfrak{S}_n$. We denote by  $\#(\sigma)$ the number of cycles of a permutation $\sigma$. For example, the identity of $\mathfrak{S}_n$ has $n$ cycles.  We prove  the following.
\begin{theorem} \label{the1}
Assume that the sequence of random permutations  $(\sigma_n)_{n\geq 1}$ satisfies:
\begin{itemize}
\item  For all positive integer $n$, $\sigma_n$ is stable under conjugation i.e.  $\forall \sigma , \rho \in \mathfrak{S}_n$,
\begin{equation}\tag{H1}\label{h1}
\mathbb{P}(\sigma_n=\sigma)=\mathbb{P}(\sigma_n=\rho^{-1}\sigma\rho).
\end{equation}
\item The number of cycles is such that: For all $\varepsilon>0$,
\begin{equation}\tag{H2}\label{h2}
\lim_{n\to \infty}\mathbb{P}\left(\frac{\#(\sigma_n)}{n^\frac 16 }>\varepsilon\right) =0.
\end{equation}
\end{itemize}
Then  for all  $s \in \mathbb{R}$,
\begin{equation} \label{TW} \tag{TW}
\lim_{n\to \infty} \mathbb{P}\left(\frac{\ell(\sigma_n)-2\sqrt{n}}{n^\frac 16}\leq s\right)=\lim_{n\to \infty} \mathbb{P}\left(\frac{\underline{\ell}(\sigma_n)-2\sqrt{n}}{n^\frac 16}\leq s\right)=F_2(s).
\end{equation}
\end{theorem}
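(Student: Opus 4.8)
The plan is to transfer Theorem~\ref{dbj} from the uniform permutation to $\sigma_n$ by an explicit coupling, exploiting that $\ell$ and $\underline{\ell}$ are very stable under composition with a permutation supported on few points. The two ingredients are a deterministic stability estimate and a coupling of $\sigma_n$ with a uniform permutation obtained by conjugating one and the same uniform $\tau$.

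First I would record the elementary estimate: for $\sigma,\theta\in\mathfrak{S}_n$,
\begin{equation*}
|\ell(\sigma)-\ell(\sigma\theta)|\le\#\{i:\theta(i)\neq i\}=:\|\theta\|,
\end{equation*}
and likewise for $\underline{\ell}$. Indeed $\sigma$ and $\sigma\theta$ take the same value at every position fixed by $\theta$, so restricting an optimal monotone subsequence of one of them to those positions loses at most $\|\theta\|$ terms and yields a monotone subsequence of the other; the reverse inequality comes from replacing $\theta$ by $\theta^{-1}$. Also note $\|\phi\psi\|\le\|\phi\|+\|\psi\|$ and that conjugation preserves $\|\cdot\|$.

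Next, the coupling. Hypothesis \eqref{h1} says precisely that, conditionally on its cycle type $\Lambda_n$, $\sigma_n$ is uniform on the corresponding conjugacy class; hence $\sigma_n\overset{d}{=}\tau\kappa_{\Lambda_n}\tau^{-1}$ with $\tau$ uniform on $\mathfrak{S}_n$, independent of $\Lambda_n$, and $\kappa_\lambda$ any fixed permutation of cycle type $\lambda$. I would take $\kappa_\lambda$ to be the product of the consecutive ``interval cycles'' $(1\,2\cdots\lambda_1)(\lambda_1{+}1\cdots\lambda_1{+}\lambda_2)\cdots$ determined by the parts of $\lambda$; then $\kappa_\lambda$ agrees with the fixed long cycle $\gamma=(1\,2\,\cdots\,n)$ except at the $\#(\kappa_\lambda)$ right endpoints of its intervals, so $\|\gamma^{-1}\kappa_\lambda\|\le\#(\kappa_\lambda)$ and $\|\kappa_\lambda\gamma^{-1}\|\le\#(\kappa_\lambda)$. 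Now let $M_n$ be an independent partition of $n$ distributed as the cycle type of a uniform permutation, and set, on one probability space,
\begin{equation*}
\sigma_n:=\tau\kappa_{\Lambda_n}\tau^{-1},\qquad U_n:=\tau\kappa_{M_n}\tau^{-1}.
\end{equation*}
Then $U_n$ is uniform on $\mathfrak{S}_n$, while $\sigma_n=U_n\theta_n$ with $\theta_n=\tau\kappa_{M_n}^{-1}\kappa_{\Lambda_n}\tau^{-1}$; inserting $\gamma$ gives $\|\theta_n\|=\|\kappa_{M_n}^{-1}\kappa_{\Lambda_n}\|\le\|\kappa_{M_n}^{-1}\gamma\|+\|\gamma^{-1}\kappa_{\Lambda_n}\|\le\#(U_n)+\#(\sigma_n)$. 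Applying the deterministic estimate to $\ell$ and to $\underline{\ell}$ yields, on this space,
\begin{equation*}
|\ell(\sigma_n)-\ell(U_n)|\le\#(\sigma_n)+\#(U_n),\qquad|\underline{\ell}(\sigma_n)-\underline{\ell}(U_n)|\le\#(\sigma_n)+\#(U_n).
\end{equation*}

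Finally I would conclude: $\#(U_n)$ is $O_{\mathbb{P}}(\log n)$, hence $o_{\mathbb{P}}(n^{1/6})$, by the classical concentration of the number of cycles of a uniform permutation, while $\#(\sigma_n)=o_{\mathbb{P}}(n^{1/6})$ is exactly \eqref{h2}; so both right-hand sides above are $o_{\mathbb{P}}(n^{1/6})$. By Theorem~\ref{dbj}, $(\ell(U_n)-2\sqrt n)/n^{1/6}$ converges in law to the Tracy--Widom distribution, and so does $(\underline{\ell}(U_n)-2\sqrt n)/n^{1/6}$ since $\underline{\ell}(\,\cdot\,)=\ell(w_0\,\cdot\,)$ for the reversal $w_0$ and $w_0U_n\overset{d}{=}U_n$. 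Slutsky's lemma, together with the continuity of $F_2$, then upgrades these to pointwise convergence of the distribution functions at every $s$, which is \eqref{TW}. I expect the only point requiring care to be the simultaneous choice of class representatives $\kappa_\lambda$ that are \emph{all} within $O(\#(\kappa_\lambda))$ transpositions of one fixed long cycle $\gamma$, which is what forces $\sigma_n$ and the coupled uniform permutation to be close in the relevant Lipschitz sense; beyond that the argument is soft, using only Theorem~\ref{dbj}, elementary symmetric-group combinatorics, and concentration of $\#(U_n)$.
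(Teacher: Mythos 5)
Your argument is correct, and it realizes the strategy the paper announces (a coupling with the uniform distribution plus Lipschitz stability of $\ell$ and $\underline{\ell}$), but the coupling itself is genuinely different from the paper's. The paper's coupling is dynamic: a Markov operator $T$ merges two cycles at a time by composing with a transposition, so that $T^{n-1}(\sigma_n)=T^{\#(\sigma_n)-1}(\sigma_n)$ lands on the uniform law on $n$-cycles (Ewens with $\theta=0$) for \emph{every} conjugation-invariant starting law (Lemma~\ref{lemmma11}); combined with the transposition bound $|\ell(\sigma)-\ell(\sigma\circ\tau)|\le 2$ of Lemma~\ref{lem} this yields $|\ell(T^{n-1}(\sigma_n))-\ell(\sigma_n)|\le 2(\#(\sigma_n)-1)$, and Theorem~\ref{dbj} is transferred first to the Ewens$(0)$ law and then to $\sigma_n$. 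Your coupling is static: writing $\sigma_n=\tau\kappa_{\Lambda_n}\tau^{-1}$ and $U_n=\tau\kappa_{M_n}\tau^{-1}$ with one shared uniform conjugator and interval-cycle class representatives gives directly $|\ell(\sigma_n)-\ell(U_n)|\le\#(\sigma_n)+\#(U_n)$ with $U_n$ exactly uniform, and your support estimate $|\ell(\sigma)-\ell(\sigma\theta)|\le\|\theta\|$ is the natural strengthening of Lemma~\ref{lem} (which is the case $\|\theta\|=2$). What each route buys: yours avoids the somewhat laborious verification that $T$ preserves conjugation-invariance and the detour through Ewens$(0)$, at the modest price of invoking $\#(U_n)=O_{\mathbb{P}}(\log n)$ for a uniform permutation (which the paper also needs, implicitly, to start the transfer from the uniform case); the paper's operator $T$ is reused verbatim for Theorems~\ref{Airyens} and~\ref{VCthm}, where the analogous inequality is needed for the partial sums $\sum_{k\le i}\lambda_k$. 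All the individual steps of your argument --- conditional uniformity on conjugacy classes under \eqref{h1}, uniformity of $U_n$, subadditivity of the support under products and its invariance under conjugation, the bound $\|\gamma^{-1}\kappa_\lambda\|\le\#(\kappa_\lambda)$ for the interval representatives, the reduction of $\underline{\ell}$ to $\ell$ via the reversal, and the Slutsky conclusion using the continuity of $F_2$ --- check out.
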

%We recall that $F_2$ is the distribution of Tracy-Widom, $\#(\sigma_n)$ is the number of cycles of $\sigma_n$ and $\ell(\sigma_n)$, $\underline{\ell}(\sigma_n)$ are  respectively the length of the longest increasing and decreasing subsequences of $\sigma_n$ 
The idea of the proof  we give in  Subsection \ref{proof1} is to construct a coupling   between any distribution satisfying these hypotheses and the uniform distribution in order to use Theorem \ref{dbj}. Let us  illustrate  {Theorem \ref{the1}} with the   Ewens distributions that were introduced by \cite{EWENS197287} to describe the mutation of alleles.
\begin{definition} \label{Ewens} 
Let $\theta$ be a non-negative real number. We say that a random permutation $\sigma_n$  follows the Ewens distribution with parameter $\theta$ if for all $\sigma \in \mathfrak{S}_n$,
\begin{align*}
\mathbb{P}(\sigma_n=\sigma)= \frac{\theta^{\#(\sigma)-1}}{\prod_{i=1}^{n-1}(\theta+i)}.
\end{align*}
\end{definition} 
Note that when $\theta=1$, the Ewens distribution is just the uniform distribution on $\mathfrak{S}_n$, whereas when $\theta=0$ we have the uniform distribution on permutations having a unique cycle. For general $\theta$, the Ewens distribution is clearly invariant under conjugation since it only involves the cycles' structure of $\theta$.
For our purpose, a useful property is that, if $\sigma_n$ follows the Ewens distribution with parameter $\theta>0$, then the number of cycles
$\#(\sigma_n)$ is the sum of $n$ independent Bernoulli random variables with parameters $\left\{\frac{\theta}{\theta+i}\right\}_{0\leq i \leq n-1}$. For further reading, we recommend \citep*{aldous,McCullagh2011,chafai:hal-00806514}. This already yields the following:
\begin{corollary} \label{2.1}
Let $(\theta_n)_{n\geq 1}$ be a sequence of  non-negative real numbers
such that:
\begin{equation}\tag{H'2}\label{ewcond}
\lim_{n\to \infty} \frac{\theta_n \log(n)  }{n^\frac 1 6}=0.
\end{equation}
If $\sigma_n$  follows the Ewens distribution  with parameter $\theta_n$, then we have Tracy-Widom fluctuations \eqref{TW}.
\end{corollary}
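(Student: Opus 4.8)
The plan is to deduce this directly from Theorem \ref{the1} by checking that its two hypotheses \eqref{h1} and \eqref{h2} hold when $\sigma_n$ follows the Ewens distribution with parameter $\theta_n$. Hypothesis \eqref{h1} is immediate: $\mathbb{P}(\sigma_n=\sigma)$ depends on $\sigma$ only through $\#(\sigma)$, hence only through the cycle type of $\sigma$, and conjugation preserves the cycle type; so $\sigma_n$ is stable under conjugation for every $n$. The whole content of the corollary is therefore the verification of \eqref{h2}, and the key observation is that \eqref{h2} only demands an $o(n^{1/6})$ bound \emph{in probability}, so a first moment (Markov) estimate is enough — no concentration argument is needed.

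To carry this out I would first dispose of the degenerate case $\theta_n=0$: then $\sigma_n$ is uniform on the $n$-cycles, so $\#(\sigma_n)=1$ deterministically and \eqref{h2} is trivial. When $\theta_n>0$, I would use the stated representation of $\#(\sigma_n)$ as a sum of $n$ independent Bernoulli variables with parameters $\theta_n/(\theta_n+i)$, $0\le i\le n-1$ (the term $i=0$ contributing a deterministic $1$). Taking expectations and bounding the harmonic sum,
\begin{equation*}
\mathbb{E}\bigl(\#(\sigma_n)\bigr)=1+\sum_{i=1}^{n-1}\frac{\theta_n}{\theta_n+i}\le 1+\theta_n\sum_{i=1}^{n-1}\frac{1}{i}\le 1+\theta_n\bigl(1+\log n\bigr).
\end{equation*}
Hypothesis \eqref{ewcond} says precisely that $\theta_n\log n=o(n^{1/6})$, and of course $1=o(n^{1/6})$, so $\mathbb{E}(\#(\sigma_n))=o(n^{1/6})$.

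It then only remains to apply Markov's inequality: for every $\varepsilon>0$,
\begin{equation*}
\mathbb{P}\left(\frac{\#(\sigma_n)}{n^{1/6}}>\varepsilon\right)\le \frac{\mathbb{E}(\#(\sigma_n))}{\varepsilon\,n^{1/6}}\longrightarrow 0 \qquad (n\to\infty),
\end{equation*}
which is exactly \eqref{h2}. Both hypotheses of Theorem \ref{the1} being met, \eqref{TW} follows. There is no real obstacle here; the only mild points of care are keeping track of the deterministic cycle coming from the $i=0$ term and noticing that the additive constant $1$ in the bound is harmless because $n^{1/6}\to\infty$. One could alternatively invoke the known central limit theorem $\#(\sigma_n)\approx\theta_n\log n$ for the Ewens distribution, but the crude first-moment bound above is both shorter and sufficient.
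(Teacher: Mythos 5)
Your proposal is correct and follows essentially the same route as the paper: check invariance under conjugation, handle $\theta_n=0$ separately, bound $\mathbb{E}(\#(\sigma_n))$ by $O(1)+\theta_n\log n$ using the Bernoulli representation, and conclude by Markov's inequality. The only cosmetic difference is how you bound the harmonic-type sum (termwise $\theta_n/(\theta_n+i)\le\theta_n/i$ versus the paper's integral comparison), which changes nothing.
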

\begin{proof}
For $n\geq3$ and $\theta_n>0$, we have
\begin{align*}
\mathbb{E}(\#(\sigma_n))&=\sum_{i=0}^{n-1} \frac{\theta_n}{i+\theta_n} = 1 +\frac{\theta_n}{1+\theta_n}+\sum_{i=2}^{n-1}\frac{\theta_n}{i+\theta_n} \leq 2+ \theta_n\sum_{i=2}^{n-1}\int_{i}^{i+1} \frac{ dt}{t-1} \leq 2+\theta_n \log(n),
\end{align*}
whereas when $\theta_n=0$, we have $\#(\sigma_n)\overset{a.s}=1$. Thus, under \eqref{ewcond}, \eqref{h2} follows from Markov inequality. 
\end{proof} 
We will apply Theorem \ref{the1}  for a generalized version of the Ewens distributions in  Section \ref{diss}. We give also other applications for random virtual permutations in Subsection \ref{vpsec}. 
\paragraph*{} The proof of Theorem  \ref{dbj} uses determinantal point processes properties   obtained from  the Plancherel measure which is also the law of the shape of the Robinson-Schensted correspondence of  random uniform permutations, see \citep{kerov}.  We will study in the next subsection this correspondence in the non-uniform setting and we give  a more general result, see Theorem \ref{Airyens}.
\subsection{The Robinson–Schensted correspondence of random permutations} \label{RSKsub}
\paragraph*{} In this subsection, we study, under appropriate scaling,  the limiting shape and the limiting distribution  of the first components of the  image of a random permutation stable under conjugation by the Robinson-Schensted correspondence. 
\paragraph*{} Let $n$ be a positive integer. A Young diagram  $\lambda=\{\lambda_i\}_{i\geq1}$ of size $n$
is a partition of $n$ i.e. 
\begin{itemize}
\item $\forall i\geq 1$, $\lambda_i \in \mathbb{N}$,
\item $\forall i\geq 1$, $\lambda_{i+1}\leq \lambda_i$,
\item $\sum_{i=1}^\infty \lambda_i=n$.
\end{itemize}

We can represent a Young diagram by boxes of size $1\times1$ such that the row  $i$ contains exactly $\lambda_i$ boxes.  For example, if $\lambda=(4,2,1,\underline{0})$, we have the diagram
$$\yng(4,2,1),$$ where $\underline{0}=(0)_{i\geq 1}$.   Let $\mathbb{Y}_n$ be the set of Young diagrams of size $n$. For example, 
$$\mathbb{Y}_4=\{(4,\underline{0}),\,(3,1,\underline{0}),\,(2,2,\underline{0}),\,(2,1,1,\underline{0}),\,(1,1,1,1,\underline{0})\}=\left\{\,\yng(4)\,,\quad\yng(3,1)\,,\quad\yng(2,2)\,,\quad\yng(2,1,1)\,,\quad\yng(1,1,1,1)\,\right\}.$$
\paragraph*{}
% conjugate before  Young tableau % , where each number .... along each row and column
In the sequel of this paper, for a young diagram $\lambda$, we denote by $\lambda'$ its conjugate  defined by ${\lambda'=(\lambda'_i)_{i\geq 1}}$ where $\lambda'_i:=|\{j;\lambda_j\geq i\}|$. For example, if $\lambda=(4,2,1,\underline{0})$, $\lambda'=(3,2,1,1,\underline{0})$.
\paragraph*{}We will use  the well-known application on the symmetric group $\mathfrak{S}_n$ with values in $\mathbb{Y}_n$ known as the shape of the image of a permutation $\sigma$ by the Robinson–Schensted correspondence \citep*{RSKR,MR0121305} or the Robinson–Schensted–Knuth correspondence \citep*{RSKK}. We denote it by $$\lambda(\sigma)=\{\lambda_i(\sigma)\}_{i\geq 1}.$$ We will not include here algorithmic details. For further reading, we recommend \citep*[Chapter 3]{Sagan2001}. For our purpose,  a useful property of this transform is that 
\begin{equation} \label{defrs}
\lambda_1(\sigma)=\ell(\sigma), \quad \lambda'_1(\sigma)=\underline{\ell}(\sigma). 
\end{equation}
\paragraph*{}
When $\sigma_n$ follows the uniform law, the distribution of $\lambda(\sigma_n)$ on $\mathbb{Y}_n$  is known as the Plancherel measure. In this case, after appropriate scaling, $\lambda(\sigma_n)$ converges at the edge to the Airy ensemble. For the definition of the Airy ensemble, which is the determinantal point process associated with the Airy kernel, see for example \citep*{tracy1994}.
\paragraph*{} In the remainder of this paper, we denote by $F_{2,k}(s_1,s_2,\dots,s_k):=\mathbb{P}(\forall i\leq k,\;\xi_i\leq s_i)$ the cumulative distribution of the top right $k$ particles of the Airy ensemble $(\xi_i)_{i\geq 1}$.
\begin{theorem}\citep*[Theorem 5]{Borodin2000}\cite[Theorem 1.4]{10.2307/2661375} \label{BOOJ}
Assume that $\sigma_n$ follows the uniform distribution on $\mathfrak{S}_n$. Then for all real numbers $s_1,s_2,\dots,s_k$,
\begin{align*}
\lim_{n\to \infty}\mathbb{P}\left(\forall i\leq k, \;\frac{\lambda_i(\sigma_n)-2\sqrt{n}}{n^\frac{1}{6}}\leq s_i\right)
= F_{2,k}(s_1,s_2,\dots,s_k).
\end{align*}
\end{theorem}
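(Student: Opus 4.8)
The plan is to route through the determinantal structure of the Plancherel measure together with a de-poissonization argument, following \citet{Borodin2000} and \citet{10.2307/2661375}. Recall first that when $\sigma_n$ is uniform the shape $\lambda(\sigma_n)$ produced by the Robinson--Schensted correspondence is Plancherel-distributed on $\mathbb{Y}_n$. The key reduction is \emph{poissonization}: one replaces the deterministic size $n$ by a random size $N$ with Poisson law of mean $\theta^2$, so that the resulting measure on $\bigsqcup_{m\ge 0}\mathbb{Y}_m$ puts mass $e^{-\theta^2}\theta^{2m}/m!$ on the Plancherel measure of $\mathbb{Y}_m$. Under this poissonized Plancherel measure the particle configuration $\{\lambda_i-i:i\ge 1\}\subset\mathbb{Z}$ is a determinantal point process whose correlation kernel is the discrete Bessel kernel with parameter $\theta$; this structural fact is the main external input, and I would cite it rather than reprove it.

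The second step is the edge scaling limit. Since $\lambda_1\sim 2\theta$ as $\theta\to\infty$, I would substitute $x=2\theta+\theta^{1/3}u$ and $y=2\theta+\theta^{1/3}v$ and show that $\theta^{1/3}$ times the discrete Bessel kernel evaluated at $(x,y)$ converges, uniformly for $u,v$ in compact sets, to the Airy kernel, together with an integrable domination bound valid on the relevant half-line. This is the technical heart and the step I expect to be the main obstacle: it amounts to a Laplace-type (steepest descent) analysis of the integral representation of the Bessel functions appearing in the kernel, in the transition regime where the order is comparable to the argument. Granting this, locally uniform convergence of correlation kernels together with the domination yields convergence of the associated determinantal point processes, and in particular
\[
\mathbb{P}_{\mathrm{poi}(\theta)}\!\left(\forall i\le k,\ \lambda_i\le 2\theta+\theta^{1/3}s_i\right)\xrightarrow[\theta\to\infty]{}F_{2,k}(s_1,\dots,s_k),
\]
the shift by $i$ being negligible at the scale $\theta^{1/3}$.

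It remains to de-poissonize. Fix integers $\ell_1\ge\dots\ge\ell_k$ and set $\Phi_m=\mathbb{P}(\forall i\le k,\ \lambda_i(\sigma_m)\le\ell_i)$; then $\sum_{m\ge 0}e^{-\theta^2}\theta^{2m}/m!\,\Phi_m$ is exactly the poissonized probability above. The sequence $(\Phi_m)_m$ is non-increasing in $m$: realizing all the $\sigma_m$ simultaneously via the Robinson--Schensted shapes of the prefixes of a single infinite sequence of i.i.d.\ continuous random variables, one has $\lambda(\sigma_m)\subseteq\lambda(\sigma_{m+1})$, hence $\lambda_i(\sigma_m)\le\lambda_i(\sigma_{m+1})$ for every $i$. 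Johansson's de-poissonization lemma then transfers the above limit to $\Phi_m$ itself, the change of scale it introduces (a shift of order $\sqrt{\log m}$ in $\theta$, hence of order $\sqrt{\log m}/m^{1/6}\to 0$ after rescaling) being harmless and $F_{2,k}$ being continuous. Taking $\theta=\sqrt n$, so that $\theta^{1/3}=n^{1/6}$, and $\ell_i=\lfloor 2\sqrt n+n^{1/6}s_i\rfloor$ gives the statement. Since the three ingredients — the discrete Bessel kernel, its Airy edge limit, and the de-poissonization lemma — are all available in \citet{Borodin2000,10.2307/2661375}, in the paper this amounts to a short invocation of those references; only the edge asymptotics would require genuine work if one insisted on a self-contained proof.
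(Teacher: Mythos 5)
Your outline is correct, and it matches what the paper does: Theorem \ref{BOOJ} is an external input that the paper simply quotes from \citet{Borodin2000} and \citet{10.2307/2661375} without proof, and your sketch (poissonization, the discrete Bessel kernel, its Airy edge limit, and de-poissonization via the monotonicity $\lambda_i(\sigma_m)\le\lambda_i(\sigma_{m+1})$ under the prefix coupling) is an accurate summary of the argument in those references. Nothing further is needed here.
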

\paragraph*{} For distributions satisfying the  same assumptions  as in  Theorem \ref{the1}, we have the same asymptotic as in the uniform setting at the edge.
 \begin{theorem}\label{Airyens}
Assume that the sequence of random permutations  $(\sigma_n)_{n\geq 1}$ satisfies \eqref{h1} and \eqref{h2}. Then for all positive integer $k$, for all real numbers $s_1,s_2,\dots,s_k$,
\begin{align}\tag{Ai} \label{TW2}
\lim_{n\to \infty}\mathbb{P}\left(\forall i\leq k,\frac{\lambda_i(\sigma_n)-2\sqrt{n}}{n^\frac{1}{6}}\leq s_i\right)=\lim_{n\to \infty}\mathbb{P}\left(\forall i\leq k,\frac{\lambda'_i(\sigma_n)-2\sqrt{n}}{n^\frac{1}{6}}\leq s_i\right)
= F_{2,k}(s_1,s_2,\dots,s_k).
\end{align}
\end{theorem}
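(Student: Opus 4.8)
The plan is to reuse the coupling behind Theorem \ref{the1}, the one genuinely new ingredient being a stability estimate for the partial sums $L_k(\sigma):=\lambda_1(\sigma)+\dots+\lambda_k(\sigma)$ under multiplication by a single transposition. By Greene's theorem, $L_k(\sigma)$ equals the maximal cardinality of a set of positions that is a union of $k$ increasing subsequences of $\sigma$, and by Dilworth's theorem such an optimal set splits into $k$ pairwise disjoint increasing subsequences; similarly $\bar L_k(\sigma):=\lambda'_1(\sigma)+\dots+\lambda'_k(\sigma)$ is the maximal size of a union of $k$ disjoint decreasing subsequences. The first step is to prove: if $\sigma'=t\sigma$ or $\sigma'=\sigma t$ for a transposition $t$, then $|L_k(\sigma)-L_k(\sigma')|\le 2$ and $|\bar L_k(\sigma)-\bar L_k(\sigma')|\le 2$. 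Indeed $\sigma$ and $\sigma'$ agree outside two positions; taking $k$ disjoint increasing subsequences of $\sigma'$ realizing $L_k(\sigma')$ and deleting the at most two entries located at those two positions produces $k$ disjoint increasing subsequences of $\sigma$ of total size at least $L_k(\sigma')-2$, and the reverse inequality is symmetric. Iterating, for any factorization $\sigma=t_1\cdots t_m\,\zeta$ into transpositions one gets $|L_k(\sigma)-L_k(\zeta)|\le 2m$ and $|\bar L_k(\sigma)-\bar L_k(\zeta)|\le 2m$, hence $|\lambda_i(\sigma)-\lambda_i(\zeta)|\le 4m$ and $|\lambda'_i(\sigma)-\lambda'_i(\zeta)|\le 4m$ for every $i\le k$.

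The second step produces such a factorization with $\zeta$ a uniform $n$-cycle and $m=\#(\sigma_n)-1$. Conditionally on the cycle type $\mu$ of $\sigma_n$, \eqref{h1} forces $\sigma_n$ to be uniform on the conjugacy class $C_\mu$, so $\sigma_n=\tau\gamma_\mu\tau^{-1}$ with $\tau$ uniform on $\mathfrak{S}_n$ and independent of $\mu$. Merging cycles by successive left multiplications by transpositions writes any fixed representative as $\gamma_\mu=u_1\cdots u_{c-1}\zeta_0$, where $c$ is the number of cycles, the $u_i$ are transpositions and $\zeta_0$ is a fixed $n$-cycle; conjugating by $\tau$ gives $\sigma_n=(\tau u_1\tau^{-1})\cdots(\tau u_{c-1}\tau^{-1})\,\zeta_n$ with $\zeta_n:=\tau\zeta_0\tau^{-1}$ a uniform $n$-cycle independent of $c=\#(\sigma_n)$. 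Combining with the first step, both $\max_{i\le k}|\lambda_i(\sigma_n)-\lambda_i(\zeta_n)|$ and $\max_{i\le k}|\lambda'_i(\sigma_n)-\lambda'_i(\zeta_n)|$ are at most $4(\#(\sigma_n)-1)$, which by \eqref{h2} is $o(n^{1/6})$ in probability.

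The third step checks that the uniform $n$-cycle already satisfies \eqref{TW2}. Since the uniform distribution on $\mathfrak{S}_n$ satisfies \eqref{h1} and \eqref{h2}, applying the first two steps to a uniform $\pi_n$ yields $\max_{i\le k}|\lambda_i(\pi_n)-\lambda_i(\zeta_n)|=O(\#(\pi_n))=o(n^{1/6})$ in probability, so Theorem \ref{BOOJ} transfers the convergence of the rescaled $(\lambda_i(\pi_n))_{i\le k}$ to the top of the Airy ensemble over to $(\lambda_i(\zeta_n))_{i\le k}$; for the conjugate rows, composing $\pi_n$ with the order-reversing permutation keeps it uniform and transposes the Robinson--Schensted shape, so $(\lambda'_i(\pi_n))_{i\le k}$, hence $(\lambda'_i(\zeta_n))_{i\le k}$, also converge to the Airy ensemble after rescaling. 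A routine sandwiching then concludes: writing $A_n:=4(\#(\sigma_n)-1)$ and $x_i:=2\sqrt n+s_i n^{1/6}$, one has $\{\forall i\le k,\ \lambda_i(\zeta_n)\le x_i-\delta n^{1/6}\}\cap\{A_n\le\delta n^{1/6}\}\subseteq\{\forall i\le k,\ \lambda_i(\sigma_n)\le x_i\}\subseteq\{\forall i\le k,\ \lambda_i(\zeta_n)\le x_i+\delta n^{1/6}\}\cup\{A_n>\delta n^{1/6}\}$; letting $n\to\infty$ and then $\delta\to0$, using Step 3, the continuity of $F_{2,k}$, and $\mathbb{P}(A_n>\delta n^{1/6})\to0$, gives the claim for the $\lambda_i$, and the same computation with $\bar L_k$ and the order-reversal gives it for the $\lambda'_i$.

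The main obstacle is the first step: the point is to recognize that Greene's theorem is exactly what converts the individual row lengths $\lambda_i$ — which are not stable under an arbitrary transposition (left or right multiplication by a transposition can move the shape by many boxes) — into partial sums that behave like $1$-Lipschitz functions, and that the optimal families can be chosen disjoint so that deleting the two affected entries costs at most two. Once this is in place, the coupling of Theorem \ref{the1} propagates verbatim to all first $k$ rows and to the conjugate shape.
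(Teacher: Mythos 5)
Your proof is correct, and its two essential ingredients coincide with the paper's: the stability estimate $\bigl|\sum_{i\le k}\lambda_i(\sigma)-\sum_{i\le k}\lambda_i(\sigma\tau)\bigr|\le 2$ derived from Greene's theorem (the paper's Lemma \ref{lemma2}, via Lemma \ref{RSKLEMMA}), and the reduction to the uniform case by comparing both $\sigma_n$ and a uniform permutation to a uniform $n$-cycle, then invoking Theorem \ref{BOOJ}. (The appeal to Dilworth in your Step 1 is harmless but unnecessary: deleting two positions from the \emph{union} already costs at most two in cardinality, which is all Greene's formulation requires.) The one genuine difference is the construction of the coupling. The paper builds the uniform $n$-cycle by iterating a Markov operator $T$ that merges two uniformly chosen cycles, and must then verify (Lemma \ref{lemmma11}) that $T$ preserves conjugation-invariance so that $T^{n-1}(\sigma_n)$ is uniform on $n$-cycles. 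You instead write $\sigma_n=\tau\gamma_\mu\tau^{-1}$ with $\tau$ uniform and independent of the cycle type $\mu$, merge the cycles of the fixed representative $\gamma_\mu$ once and for all, and conjugate; this hands you the uniform $n$-cycle $\zeta_n=\tau\zeta_0\tau^{-1}$ at transposition distance $\#(\sigma_n)-1$ with no distributional computation at all, which is arguably cleaner for this theorem. What the paper's $T$ buys in exchange is reusability: the same operator and Lemma \ref{lemmma11} are quoted again verbatim in the proofs of Theorems \ref{the1} and \ref{VCthm}. Your Step 3 also makes explicit, via composition with the order-reversing permutation, why the $\lambda_i'$ statement holds for the uniform case --- a point the paper leaves as ``the same argument applies.''
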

  Clearly, the convergence  \eqref{TW2} holds for the Ewens distributions under the hypothesis \eqref{ewcond}.
\paragraph*{}
Using \eqref{defrs}, Theorem \ref{the1} is a direct application of this theorem for $k=1$.
 The proof we provide in Subsection \ref{RSKPROOF} is a generalization of the proof of Theorem \ref{the1}. We give separate proofs of Theorem \ref{the1} and Theorem \ref{Airyens} because the  proof of Theorem \ref{the1} is  simpler and does not require any knowledge of the representations of the symmetric group. Moreover, we believe that understanding the  proof of Theorem~\ref{the1} is  helpful to understand the main idea of the  proof of Theorem~\ref{Airyens}.      
\paragraph*{}
The typical shape  under the Plancherel measure  was studied separately by \cite{LOGAN1977206} and \cite{MR0480398}. Stronger results are proved by \cite{Vershik1985}.  In 1993, Kerov studied the limiting fluctuations  but did not publish his results. See \citep*{10.1007/978-94-010-0524-1_3} for further  details.
 Let $L_{\lambda(\sigma)}$ be the height function of $\lambda(\sigma)$   rotated by $\frac{3\pi}{4}$  and extended by the function $x\mapsto |x|$ to obtain a function defined on $\mathbb{R}$. For example,  if $\lambda(\sigma)=(7,5,2,1,1,\underline{0})$ the associated function $L_{\lambda(\sigma)}$ is represented by Figure \ref{figL}. 
\begin{figure}[ht]
\centering
\begin{tikzpicture}    [/pgfplots/y=0.5cm, /pgfplots/x=0.5cm]
      \begin{axis}[
    axis x line=center,
    axis y line=center,
    xmin=0, xmax=10,
    ymin=0, ymax=10, clip=false,
    ytick={0},
	xtick={0},
    minor xtick={0,1,2,3,3,4,5,6,7,8,9},
    minor ytick={0,1,2,3,3,4,5,6,7,8,9},
    grid=both,
    legend pos=north west,
    ymajorgrids=false,
    xmajorgrids=false, anchor=origin,
    grid style=dashed    , rotate around={45:(rel axis cs:0,0)}
,
]

\addplot[
    color=blue,
        line width=3pt,
    ]
    coordinates {
    (0,10)(0,7)(1,7)(1,5)(2,5)(2,2)(3,2)(3,1)(5,1)(5,0)(10,0)
    };
 
\end{axis}
\begin{axis}[
	axis x line=center,
    axis y line=center,
    xmin=-7.07, xmax=7.07,
    ymin=0, ymax=8, anchor=origin, clip=false,
    xtick={-7,-6,-5,-4,-3,-2,-1,0,1,2,3,4,5,6,7},
    ytick={0,1,2,3,3,4,5,6,7,8},
    legend pos=north west,
    ymajorgrids=false,
    xmajorgrids=false,rotate around={0:(rel axis cs:0,0)},
    grid style=dashed];
\end{axis}
    \end{tikzpicture}
    \caption{ $L_{(7,5,2,1,1,\underline{0})}$}
     \label{figL}
\end{figure}
For the Plancherel measure we have the following result. 

\begin{theorem}\cite[Theorem 4]{Vershik1985} \label{BOOJ2}
Assume that $\sigma_n$ follows the uniform distribution.
Then for all $\varepsilon>0$,
\begin{align*}
\lim_{n\to \infty} \mathbb{P}\left(\sup_{s\in \mathbb{R}} \left|\frac{1}{\sqrt{2n}}L_{\lambda(\sigma_n)}\left({s}{\sqrt{2n}}\right)-\Omega(s)\right|<\varepsilon\right) =1,
\end{align*}
where 
\begin{align*}
\Omega(s):=\begin{cases}
\frac{2}{\pi}(s\arcsin({s})+\sqrt{1-s^2}) & \text{ if } |s|<1 \\ 
|s| & \text{ if } |s|\geq 1 
\end{cases}.
\end{align*}
\end{theorem}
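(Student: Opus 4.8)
The plan is to recast this Vershik--Kerov--Logan--Shepp limit-shape statement as the concentration of the Plancherel measure around the unique minimiser of a deterministic functional, as in \citep{LOGAN1977206, MR0480398, Vershik1985}. For $\lambda\vdash n$, let $\phi_\lambda$ denote the rescaled profile $s\mapsto\frac{1}{\sqrt{2n}}L_\lambda\!\left(\frac{s}{\sqrt{2n}}\right)$ of the statement; each $\phi_\lambda$ is a $1$-Lipschitz function equal to $s\mapsto|s|$ outside a compact set, subject to the area normalisation forced by $|\lambda|=n$. A crude first-moment bound on the number of monotone subsequences of a uniform permutation, combined with \eqref{defrs}, gives $\mathbb{P}\bigl(\lambda_1(\sigma_n)\vee\lambda'_1(\sigma_n)>C\sqrt n\bigr)\to 0$ for $C$ large; on that event $\phi_{\lambda(\sigma_n)}$ belongs to a fixed compact set $\mathcal{K}$ of such profiles (compactness for uniform convergence follows from Arzelà--Ascoli). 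Hence it is enough to prove that, for each $\varepsilon>0$, $\mathbb{P}\bigl(\phi_{\lambda(\sigma_n)}\in\mathcal{K},\ \sup_{s}|\phi_{\lambda(\sigma_n)}(s)-\Omega(s)|\ge\varepsilon\bigr)\to 0$.

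The engine is the uniform estimate, over $\lambda\vdash n$ with $\phi_\lambda\in\mathcal{K}$,
\begin{equation*}
\frac{1}{n}\log\mathbb{P}\bigl(\lambda(\sigma_n)=\lambda\bigr)\;=\;-\,\mathcal{I}(\phi_\lambda)\;+\;o(1),
\end{equation*}
where $\mathcal{I}$ is an explicit lower-semicontinuous functional on $\mathcal{K}$. It follows from $\mathbb{P}(\lambda(\sigma_n)=\lambda)=(f^\lambda)^2/n!$, the hook-length formula $f^\lambda=n!/\prod_{(i,j)\in\lambda}h(i,j)$, Stirling's formula, and the fact that $\sum_{(i,j)\in\lambda}\log h(i,j)$, after removing its entropic part $\tfrac12 n\log n$, is a Riemann sum converging to a continuum double-integral functional of $\phi_\lambda$ (built from the logarithmic energy of the signed ``corner measure'' $\phi_\lambda''$). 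One then gets $\mathcal{I}\ge 0$ on $\mathcal{K}$ with $\inf_{\mathcal{K}}\mathcal{I}=0$: a negative value of $\mathcal{I}$ would force some Plancherel weight to exceed $1$ for large $n$, whereas $\inf\mathcal{I}>0$ together with $|\mathbb{Y}_n|=e^{O(\sqrt n)}$ would contradict $\sum_\lambda\mathbb{P}(\lambda)=1$.

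It remains to identify the minimiser and to conclude. Logan--Shepp and Vershik--Kerov \citep{LOGAN1977206, MR0480398} show that the Euler--Lagrange equation for $\mathcal{I}$ is a singular integral equation — the Hilbert transform of $\Omega''$ is affine on the support of $\Omega''$ — whose unique admissible solution is the curve $\Omega$; uniqueness is ensured by the strict convexity of the logarithmic-energy functional on signed measures of a given total mass, and a direct computation gives $\mathcal{I}(\Omega)=0$. Lower semicontinuity of $\mathcal{I}$ and compactness of $\mathcal{K}$ then yield
\begin{equation*}
\delta\;:=\;\inf\bigl\{\mathcal{I}(\phi):\phi\in\mathcal{K},\ \sup_{s}|\phi(s)-\Omega(s)|\ge\varepsilon\bigr\}\;>\;0,
\end{equation*}
and a union bound over the $e^{O(\sqrt n)}$ elements of $\mathbb{Y}_n$ gives $\mathbb{P}\bigl(\sup_{s}|\phi_{\lambda(\sigma_n)}(s)-\Omega(s)|\ge\varepsilon\bigr)\le e^{O(\sqrt n)}e^{-n(\delta+o(1))}\to 0$; combined with the a priori support bound, this is the claim. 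The main obstacle is the variational analysis — solving the Euler--Lagrange integral equation explicitly and establishing uniqueness of the minimiser — together with making the Stirling/Riemann-sum expansion of $\tfrac1n\log\mathbb{P}(\lambda)$ genuinely uniform over $\mathcal{K}$, which requires care for the cells near the main diagonal and near the corners of $\lambda$ (a mild Poissonisation of $n$ can be used to streamline these estimates).
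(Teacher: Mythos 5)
The paper does not prove this statement: it is imported verbatim as Theorem 4 of \cite{Vershik1985}, and everything downstream (the proof of Theorem \ref{VCthm}) uses it as a black box. So there is no internal proof to compare against; what you have written is a reconstruction of the original Logan--Shepp/Vershik--Kerov argument. As a roadmap it is the correct one: the reduction to a compact set of profiles via the $e\sqrt n$ bound on $\lambda_1\vee\lambda'_1$, the hook-length formula plus Stirling giving $\frac1n\log\mathbb{P}(\lambda)=-\mathcal{I}(\phi_\lambda)+o(1)$, the soft normalisation argument forcing $\inf\mathcal{I}=0$, the variational identification of $\Omega$, and the union bound over $e^{O(\sqrt n)}$ partitions are exactly the ingredients of the classical proof.

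That said, as a proof it is incomplete in precisely the two places you flag yourself. First, the uniform Riemann-sum asymptotics for $\sum\log h(i,j)$ over all $\lambda$ with $\phi_\lambda\in\mathcal{K}$ is the technical heart of Logan--Shepp and Vershik--Kerov and is only asserted here; note also that for the union bound you only need the one-sided estimate $\mathbb{P}(\lambda)\le e^{-n(\mathcal{I}(\phi_\lambda)-o(1))}$, which is materially easier to make uniform than the two-sided version you state (the lower bound is delicate near the corners and the diagonal). Second, the Euler--Lagrange analysis --- solving the singular integral equation, verifying that $\Omega$ is admissible, and proving uniqueness via strict convexity of the logarithmic energy --- is the other substantial piece and is likewise deferred. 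One further small gap in the soft argument for $\inf_{\mathcal{K}}\mathcal{I}=0$: to rule out $\mathcal{I}(\phi)<0$ you need every $\phi\in\mathcal{K}$ (with the area normalisation) to be approximable by genuine diagram profiles of size exactly $n$ with an error compatible with the $o(1)$ in the exponent; this is true but should be said. In short: correct strategy, same as the cited source, but the statement is being used in the paper as an external input and your sketch does not yet discharge the two analytic steps that make the original theorem nontrivial.
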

Under weaker conditions than those of Theorem \ref{Airyens}, we show a similar result. For the remainder of this paper, we will refer to this limiting shape as  the Vershik-Kerov-Logan-Shepp shape.
This convergence is closely related to the Wigner's semi-circular law. For further details, one can see \citep{ss1,ss3,ss2}.
\begin{theorem}\label{VCthm} %the hight

\begin{figure}[ht]
    \centering
    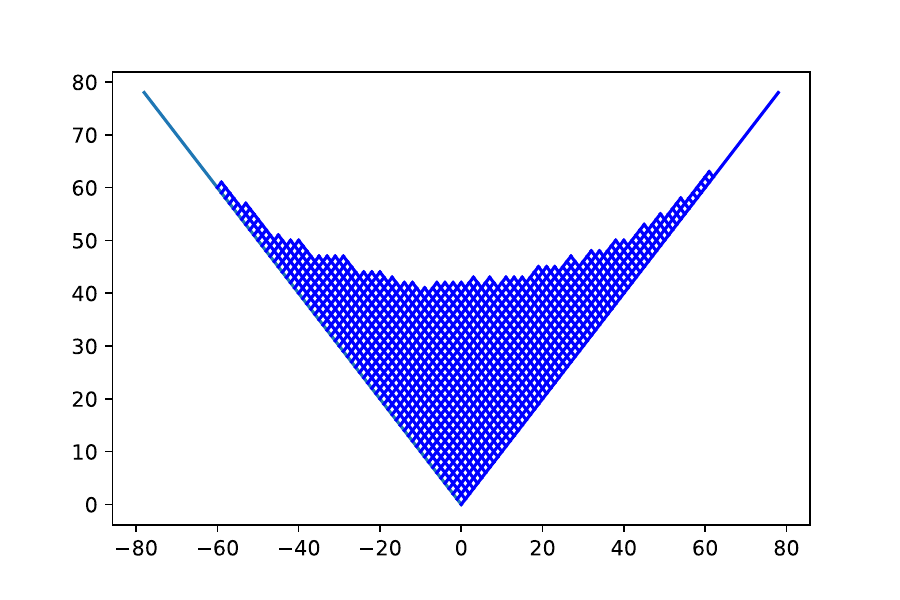
    
    \caption{Illustration of the Vershik-Kerov-Logan-Shepp convergence}
\end{figure}
Assume that the sequence of random permutations  $(\sigma_n)_{n\geq 1}$ satisfies \eqref{h1} and that
for all $\varepsilon>0$,
\begin{equation}\tag{H3}\label{H4}
\lim_{n\to \infty}\mathbb{P}\left(\frac{\#(\sigma_n)}{{n} }>\varepsilon\right) =0.
\end{equation}
Then for all $\varepsilon>0$,
\begin{align}\tag{VKLS}\label{VC}
\lim_{n\to \infty} \mathbb{P}\left(\sup_{s\in \mathbb{R}} \left|\frac{1}{\sqrt{2n}}L_{\lambda(\sigma_n)}\left({s}{\sqrt{2n}}\right)-\Omega(s)\right|<\varepsilon\right) =1.
\end{align}

\end{theorem}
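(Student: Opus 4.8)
The plan is to deduce the result from the Plancherel case, Theorem \ref{BOOJ2}, by a coupling argument, much as for Theorem \ref{the1}, exploiting that the conclusion \eqref{VC} is only a law of large numbers so that the coarser hypothesis \eqref{H4} suffices. First I would use \eqref{h1} to condition on the cycle type: writing $\mu$ for the cycle type of $\sigma_n$ and $K=\#(\sigma_n)$ for its number of parts, given $\mu$ the permutation $\sigma_n$ is uniform among the permutations of type $\mu$. It is then enough to show that for every $\delta>0$ there is $\varepsilon>0$ such that, for all large $n$ and every partition $\mu$ of $n$ with at most $\varepsilon n$ parts, a uniform permutation of type $\mu$ satisfies $\sup_{s}\bigl|\tfrac{1}{\sqrt{2n}}L_{\lambda(\sigma_n)}(s/\sqrt{2n})-\Omega(s)\bigr|<\delta$ with probability $>1-\delta$; conditioning on $\mu$ and using \eqref{H4} to bound $\mathbb P(K>\varepsilon n)$ then gives the theorem.

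For the coupling, sample a uniform word $w=(w_1,\dots,w_n)$ and form from it both the uniform $n$-cycle $\tau=(w_1\,w_2\cdots w_n)$ and the permutation $\sigma$ obtained by cutting $w$ into consecutive blocks of lengths $\mu_1,\dots,\mu_K$ and cyclising each block; a direct count shows $\sigma$ is uniform of type $\mu$, and the permutation matrices of $\sigma$ and $\tau$ differ in exactly $2K$ entries, one pair per block boundary. A uniform $n$-cycle in turn has shape $\Omega$: a uniform permutation of $\mathfrak S_n$ has $O(\log n)$ cycles with high probability, so merging its cycles alters $O(\log n)$ matrix entries and produces a uniform $n$-cycle, reducing this case to Theorem \ref{BOOJ2}. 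Thus everything rests on a deterministic stability estimate: if $\sigma,\sigma'\in\mathfrak S_n$ have permutation matrices differing in at most $D$ entries, then $\sup_{x\in\mathbb R}|L_{\lambda(\sigma)}(x)-L_{\lambda(\sigma')}(x)|\le C\sqrt D$ for an absolute constant $C$, i.e. after the scaling of \eqref{VC} the rescaled profiles differ by at most $C\sqrt{D/(2n)}$. Applying this with $D=2K=o(n)$ and with $D=O(\log n)$, the triangle inequality and Theorem \ref{BOOJ2} yield \eqref{VC}.

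To prove the stability estimate, Greene's theorem first reduces it to a statement about partitions: $\lambda_1(\sigma)+\dots+\lambda_k(\sigma)$ is the maximal total size of $k$ disjoint increasing subsequences of $\sigma$, and $\lambda_1'(\sigma)+\dots+\lambda_k'(\sigma)$ the analogue for decreasing subsequences, so deleting or inserting a single $1$ from a $0/1$-matrix changes each of these by at most $1$; interpolating between the matrices of $\sigma$ and $\sigma'$ gives $\bigl|\sum_{i\le k}\lambda_i(\sigma)-\sum_{i\le k}\lambda_i(\sigma')\bigr|\le D$ and the same for the conjugates, for all $k$. The delicate step — and the one I expect to be the main obstacle — is to promote this control of all partial row- and column-sums to the profile bound: a discrepancy of size $h$ between the two profiles along some diagonal forces one Young diagram to contain, within a slab around that diagonal, a roughly $h\times h$ block of boxes absent from the other, which costs order $h^2$ in the relevant partial sum, so $h^2=O(D)$. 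It is precisely this gain from the naive $O(D)$ to the necessary $O(\sqrt D)$ that makes the hypothesis $\#(\sigma_n)=o(n)$ in \eqref{H4}, rather than $o(\sqrt n)$, enough, and proving it seems to require the geometry of Young diagrams beyond Greene's theorem; the remaining points — the count for the block construction and the conditioning in the first reduction — are routine.
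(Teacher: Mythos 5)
Your proposal is correct and follows essentially the same route as the paper: a coupling of $\sigma_n$ with a uniform $n$-cycle whose cost is controlled by $\#(\sigma_n)$ (the paper uses an iterated cycle-merging Markov operator $T$ in place of your word-cutting construction, but the two are interchangeable here), Greene's theorem to bound the change in the partial sums $\sum_{i\le k}\lambda_i$ by $O(\#(\sigma_n))$, and then the square-root gain from partial-sum control to sup-norm control of the rescaled profiles. The ``delicate step'' you flag is exactly the paper's Lemma \ref{34}, namely $\sup_s\left(L_\lambda(s)-L_\mu(s)\right)^2\le 4\max_k\bigl|\sum_{i\le k}(\lambda_i-\mu_i)\bigr|$, and its proof is precisely the argument you sketch: a discrepancy of height $k$ between the two profiles at a given diagonal position, combined with the monotonicity of the two partitions, forces $\sum_l(\mu_l-\lambda_l)\ge k^2$ over a window of $k$ consecutive indices, so your plan goes through as written.
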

We will prove this result in Subsection \ref{RSKPROOF} using the same coupling as in the proof of Theorem \ref{the1}. 
\subsection{The descent process}
\paragraph*{}
Let $n$ be a positive integer and $\sigma \in \mathfrak{S}_n$. We define  
\begin{align}
{D}(\sigma):=\{ i\in \{1,\,\dots,\,n-1\};\;\sigma(i+1)<\sigma(i)\}.
\end{align}
For example,  $$\text{for }\sigma=\begin{pmatrix}
 1& 2 & 3 & 4 & 5 \\ 
 5& 3 & 1 & 4 & 2
\end{pmatrix}, \quad D(\sigma)=\{1,\,2,\,4\}.$$
\paragraph*{} When $\sigma$ is random, $D(\sigma)$ is known as the descent process.

\begin{theorem} % le noyaux introduire apres, k_0(i-j) dans la formule et dire apres que le noyaux etc. 

(\citep*[Theorem 5.1]{MR2721041})
\label{borodin2}
Assume that $\sigma_n$  follows the uniform distribution on $\mathfrak{S}_n$. Then for all  ${A \subset \{1,2,\dots,n-1\}}$, 
\begin{equation*}\mathbb{P}(A \subset D(\sigma_n))=\det([k_{0}(j-i)]_{i,j \in A}),
\end{equation*}
where, 
\begin{align*}
\sum_{i\in \mathbb{Z}}k_0(i)z^i=\frac{1}{1-e^z}.
\end{align*}
\end{theorem}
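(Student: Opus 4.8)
The plan is to realise the uniform permutation through i.i.d.\ continuous random variables, compute $\mathbb P(A\subseteq D(\sigma_n))$ directly, and then recognise the answer as the Toeplitz determinant in the statement.

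First I would let $U_1,\dots,U_n$ be i.i.d.\ uniform on $[0,1]$ and let $\sigma_n\in\mathfrak S_n$ be the (almost surely well-defined) permutation recording their relative ranks; it is uniformly distributed on $\mathfrak S_n$, and $i\in D(\sigma_n)$ holds precisely when $U_i>U_{i+1}$. Hence $\mathbb P(A\subseteq D(\sigma_n))=\mathbb P\big(\bigcap_{i\in A}\{U_i>U_{i+1}\}\big)$, an expression that no longer depends on $n$. Then I would decompose $A$ into its maximal runs of consecutive integers, of lengths $r_1,\dots,r_K$ say. A run of length $r$ starting at $a$ contributes the event $U_a>U_{a+1}>\dots>U_{a+r}$, which is measurable with respect to the $r+1$ variables $U_a,\dots,U_{a+r}$; since two distinct maximal runs are separated by at least one index, the blocks of variables attached to different runs are disjoint and the corresponding events independent. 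As $r+1$ i.i.d.\ continuous random variables occur in any prescribed order with probability $1/(r+1)!$, this yields $\mathbb P(A\subseteq D(\sigma_n))=\prod_{k=1}^K\frac1{(r_k+1)!}$.

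It then suffices to show $\det\big([k_0(j-i)]_{i,j\in A}\big)=\prod_{k=1}^K\frac1{(r_k+1)!}$. The identity $\sum_i k_0(i)z^i=\frac1{1-e^z}$, whose right-hand side has a simple pole at the origin, gives $k_0(i)=0$ for $i\le-2$ and $k_0(-1)=-1$. Listing $A$ in increasing order, whenever $i,j\in A$ belong to two different runs with $i$ in the later run we have $j-i\le-2$, so $[k_0(j-i)]_{i,j\in A}$ is block upper-triangular with one diagonal block per run, and its determinant is the product of the block determinants. Since $k_0(j-i)$ depends only on $j-i$, the block attached to a run of length $r$ equals $[k_0(j-i)]_{0\le i,j\le r-1}$, and the whole statement reduces to $\det\big([k_0(j-i)]_{0\le i,j\le r-1}\big)=\frac1{(r+1)!}$.

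For this last identity I would set $h(z)=\frac{e^z-1}{z}=\sum_{m\ge0}\frac{z^m}{(m+1)!}$ and $g=1/h=\sum_{m\ge0}g_mz^m$ (so $g_0=1$); then $\frac1{1-e^z}=-\frac1{z\,h(z)}$ forces $k_0(l)=-g_{l+1}$. Writing $a_r:=\det[k_0(j-i)]_{0\le i,j\le r-1}$ and $a_0:=1$, I would expand this upper-Hessenberg matrix along its last column: using $k_0(i)=0$ for $i\le-2$ again, the minor obtained by deleting row $i$ and the last column is block upper-triangular and equals $(-1)^{r-1-i}a_i$, and with the cofactor signs all cancelling one gets the recursion $a_r=\sum_{j=0}^{r-1}k_0(j)\,a_{r-1-j}$. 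The claim $a_r=1/(r+1)!$ then follows by induction, the inductive step being exactly the coefficient relation $\sum_{j=0}^{r-1}\frac{k_0(j)}{(r-j)!}=\frac1{(r+1)!}$, i.e.\ $[z^r]\big(h(z)g(z)\big)=0$ rewritten through $k_0(l)=-g_{l+1}$. (Alternatively, one can identify the run-block with a complementary minor of the inverse of the triangular Toeplitz matrix of $h$ and apply Jacobi's minor identity, which produces $1/(r+1)!$ directly.) I expect this Toeplitz-determinant evaluation — matching the combinatorial answer $1/(r+1)!$ with the coefficients of $1/(1-e^z)$ — to be the only genuine obstacle; in the earlier steps the only points requiring care are the uniformisation of the first paragraph and the disjointness of the run-blocks, both of which are routine.
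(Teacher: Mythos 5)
Your proof is correct. The paper itself does not prove this statement --- it imports it from Borodin--Diaconis--Fulman --- and the route taken there (and echoed later in the paper's proof of its Theorem~\ref{5}) is the opposite of yours: one first computes the run correlations $\rho(\{1,\dots,r\})=\tfrac{1}{(r+1)!}$ exactly as in your second paragraph, and then invokes the general structural theorem (Theorem~\ref{bordin} in the paper) that any stationary one-dependent simple point process on $\mathbb{N}^*$ is determinantal with kernel generated by $-1/(z+\sum_{i\ge1}a_iz^{i+1})$; plugging in $a_i=\tfrac{1}{(i+1)!}$ gives $-1/(e^z-1)=1/(1-e^z)$. You instead verify the determinantal identity directly: the block upper-triangular reduction to a single run (using $k_0(i)=0$ for $i\le-2$), the Hessenberg expansion $a_r=\sum_{j=0}^{r-1}k_0(j)a_{r-1-j}$, and the coefficient identity $[z^r](h\cdot 1/h)=0$ with $h(z)=(e^z-1)/z$ all check out, including the sign bookkeeping in the cofactor expansion and the base case $k_0(0)=\tfrac12$. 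What your approach buys is a fully self-contained, elementary proof of this one instance with no appeal to the one-dependence machinery; what it gives up is generality --- the BDF theorem handles any stationary one-dependent process at once, which is exactly what the paper needs later for the kernels $k_{x_0}$. Your argument is essentially a hands-on reproof of that theorem specialized to $a_i=\tfrac{1}{(i+1)!}$, and the recursion $a_r=\sum_j k_0(j)a_{r-1-j}$ you derive is the same inversion relation that underlies the general statement.
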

We say  that the descent process is determinantal with kernel $K_0(i,j):=k_0(j-i)$.
Determinantal point processes were  introduced by \cite{10.2307/1425855} to describe fermions in quantum mechanics. For further reading we refer for example to \citep{BDPP}.
\paragraph*{} In the non-uniform setting, the descent process is already studied for the Mallow's law with  Kendall tau metric: it is also determinantal  with   different kernels. See \citep*[Proposition 5.2]{MR2721041}. Using similar techniques as in the previous subsections, we  show that for a large class of random permutations, the limiting descent process is determinantal with the same kernel as the uniform setting.
\begin{theorem}\label{thm2}
Assume that the sequence of random permutations  $(\sigma_n)_{n\geq 1}$ satisfies \eqref{h1} and
\begin{equation}\tag{H4}\label{h3}
\lim_{n\to \infty} \mathbb{P}(\sigma_n(1)=1)= 0.
\end{equation}
Then  for all finite set  $A \subset \mathbb{N}^*:=\{1,2,\dots\}$, 
\begin{equation} \label{main} \tag{DPP}
\lim_{n\to \infty} \mathbb{P}(A \subset D(\sigma_n))=\det([k_0(j-i)]_{i,j \in A}).
\end{equation}
\end{theorem}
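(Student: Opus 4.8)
The plan is to reduce the non-uniform case to Theorem~\ref{borodin2} via the same conjugation-coupling philosophy used for Theorems~\ref{the1} and \ref{Airyens}, exploiting the fact that the descent set of a permutation is governed by the relative order of consecutive values, which is stable under relabeling in a controlled way. The starting observation is that if $\tau \in \mathfrak{S}_n$ and $c$ is an $n$-cycle, then $D$ of the conjugate $c^{-1}\tau c$ differs from $D(\tau)$ only near the ``seam'' where $c$ wraps around: more precisely, conjugating by a cycle amounts to a cyclic shift of the one-line notation together with a relabeling of the values, and outside a bounded neighbourhood of the shift point the pattern of ascents/descents is preserved. So for a fixed finite set $A \subset \mathbb{N}^*$, the event $\{A \subset D(\sigma_n)\}$ is, up to a negligible set of ``bad'' conjugating elements, determined by the descent pattern of a uniform permutation restricted to $A$.

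The key steps, in order, are as follows. First I would make precise the decomposition of an invariant permutation: by \eqref{h1}, $\sigma_n$ has the same law as $\rho^{-1}\sigma_n\rho$ for a uniform independent $\rho$, and one shows (as in the earlier proofs) that this lets one write $\sigma_n \overset{d}{=}$ a uniform permutation conjugated by an element whose cycle structure matches that of $\sigma_n$; alternatively, one conditions on the cycle type of $\sigma_n$ and notes that conditionally $\sigma_n$ is a uniformly random permutation of that cycle type, which in turn is a relabeling of a \emph{fixed} product of cycles of those lengths by a uniform $\rho$. Second, I would quantify how $D(\sigma_n) \cap (A \cup (A-1) \cup (A+1))$ compares to $D(\tau_n)$ where $\tau_n$ is genuinely uniform on $\mathfrak{S}_n$: the difference is controlled by the number of cycle-boundaries of $\sigma_n$ that fall inside a fixed window, plus the single index $1$ (which is why \eqref{h3} is exactly the right hypothesis — a descent at $1$ is spoiled precisely when $\sigma_n(1)=1$ in the relevant normalization). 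Third, using \eqref{h3} and the fact that $A$ is fixed and finite, I would show the probability of a ``bad'' configuration (a cycle boundary landing in the finite window around $A$, or $\sigma_n(1)=1$) tends to $0$; here one needs that the expected number of short cycles does not blow up, which follows from invariance plus \eqref{h3} by a standard argument (the number of fixed points alone, and more generally short cycles, is controlled once $\mathbb{P}(\sigma_n(1)=1)\to 0$, since by symmetry $\mathbb{P}(\sigma_n(i)=i)\to 0$ for each $i$ and one sums). Finally, on the complement of the bad set, $A \subset D(\sigma_n)$ iff $A \subset D(\tau_n)$ for the coupled uniform $\tau_n$, so
\begin{equation*}
\lim_{n\to\infty}\mathbb{P}(A \subset D(\sigma_n)) = \lim_{n\to\infty}\mathbb{P}(A \subset D(\tau_n)) = \det([k_0(j-i)]_{i,j\in A}),
\end{equation*}
the last equality being Theorem~\ref{borodin2} together with the convergence of its right-hand side (which is already in fixed determinantal form and does not depend on $n$ once $\max A < n$).

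The main obstacle I anticipate is step two: making the coupling tight enough that the descent pattern on the finite window around $A$ agrees \emph{exactly}, not just approximately, with that of a uniform permutation, while honestly accounting for boundary effects at the ends of the one-line word. Conjugation by a cycle cyclically rotates the positions, so ``position $i$'' in $\sigma_n$ corresponds to a shifted position, and one must argue that the law of the pattern in a fixed window is asymptotically insensitive to this shift and to the relabeling of values — this is where one uses that $k_0$ is translation-invariant and that the uniform descent process is stationary (indeed Theorem~\ref{borodin2}'s kernel depends only on $j-i$), so the window can be placed anywhere. A clean way to organize this is to prove a one-dependence/stationarity statement for the limiting process first and then observe that the finite-dimensional event $\{A\subset D(\sigma_n)\}$ only ever sees a bounded number of coordinates, so the global rotation is harmless in the limit. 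The hypothesis \eqref{h3} enters precisely to kill the one genuinely non-translation-invariant defect, namely the behaviour at the left end $i=1$.
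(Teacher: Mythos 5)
Your top-level plan -- localize the event $\{A\subset D(\sigma_n)\}$ to the finite window $A'=\{1,\dots,\max(A)+1\}$, show that on a good event of probability tending to $1$ the window looks like that of a uniform permutation, and then invoke Theorem~\ref{borodin2} -- is exactly the paper's strategy. But the mechanism you propose for the crucial middle step does not work, and this is a genuine gap rather than a presentational one. First, conjugation by a general $\rho$ is not ``a cyclic shift of the one-line notation plus a relabeling''; that description is only valid for the particular cycle $i\mapsto i+1$, so the ``seam'' picture does not give you a coupling with a uniform permutation. Second, your claim that \eqref{h1} and \eqref{h3} control the expected number of short cycles is false: a uniform fixed-point-free involution satisfies \eqref{h1} and has $\mathbb{P}(\sigma_n(1)=1)=0$, yet it has $n/2$ cycles, so in any ``chop a uniform word into cycles'' encoding a constant fraction of positions are cycle boundaries and your bad event (a boundary landing in the fixed window) has probability bounded away from $0$. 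Since the theorem \emph{does} hold for such involutions, any argument that needs to avoid cycle boundaries is taking a wrong turn. Third, the assertion that off the bad set one has $A\subset D(\sigma_n)$ iff $A\subset D(\tau_n)$ for a coupled uniform $\tau_n$ is never backed by an actual coupling.

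The step you are missing is the following exchangeability lemma, which replaces both the coupling and the cycle-boundary bookkeeping. Let $E_n=\{\sigma_n(A')\cap A'=\emptyset\}$. For any two tuples $(b_1,\dots,b_{m+1})$ and $(c_1,\dots,c_{m+1})$ of distinct elements of $\{m+2,\dots,n\}$ there is a permutation $\hat\sigma$ fixing $A'$ pointwise with $\hat\sigma(c_i)=b_i$; conjugating by $\hat\sigma$ and using \eqref{h1} gives
$\mathbb{P}(\sigma_n(1)=b_1,\dots,\sigma_n(m+1)=b_{m+1})=\mathbb{P}(\sigma_n(1)=c_1,\dots,\sigma_n(m+1)=c_{m+1})$,
so conditionally on $E_n$ the vector $(\sigma_n(1),\dots,\sigma_n(m+1))$ is uniform over such tuples -- identically distributed to the corresponding vector for a uniform permutation conditioned on $E_n$. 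Since $\{A\subset D(\sigma_n)\}$ depends only on this vector, $\mathbb{P}(A\subset D(\sigma_n)\mid E_n)=\mathbb{P}(A\subset D(\tilde\sigma_n)\mid E_n)$ exactly, with no translation-invariance or stationarity input needed. Finally $\mathbb{P}(E_n)\to 1$ follows directly from invariance and \eqref{h3}, because \eqref{h1} forces $\mathbb{P}(\sigma_n(i)=j)=(1-\mathbb{P}(\sigma_n(1)=1))/(n-1)$ for $i\neq j$ and $\mathbb{P}(\sigma_n(i)=i)=\mathbb{P}(\sigma_n(1)=1)$, so a union bound over the finitely many pairs in $A'\times A'$ suffices; no control on the number of cycles, short or otherwise, is required.
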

We  will prove this result in Subsection \ref{descpr} but before that  let us illustrate it by the Ewens distributions (see Definition~\ref{Ewens}).
\begin{corollary}\label{4.1}  Let $(\theta_n)_{n \geq 1}$ be a sequence of non-negative real numbers. Assume that $\sigma_n$  follows the Ewens distribution with parameter $\theta_n$. If \begin{align*} 
\lim_{n \to \infty}  \frac{\theta_n}{n}=0.
\end{align*}
Then the limiting descent process is determinantal with kernel $K_0$ \eqref{main}. \end{corollary}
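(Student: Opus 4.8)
The plan is to deduce Corollary \ref{4.1} directly from Theorem \ref{thm2}, so it suffices to check that the Ewens distribution with parameter $\theta_n$ satisfies the two hypotheses \eqref{h1} and \eqref{h3}. Hypothesis \eqref{h1} is immediate: as already observed after Definition \ref{Ewens}, $\mathbb{P}(\sigma_n=\sigma)$ depends on $\sigma$ only through $\#(\sigma)$, which is a conjugation invariant, so $\mathbb{P}(\sigma_n=\sigma)=\mathbb{P}(\sigma_n=\rho^{-1}\sigma\rho)$ for every $\rho\in\mathfrak{S}_n$. Hence the only thing to verify is \eqref{h3}, namely that $\mathbb{P}(\sigma_n(1)=1)\to 0$ when $\theta_n/n\to 0$.

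First I would compute $\mathbb{P}(\sigma_n(1)=1)$ exactly. Assume $\theta_n>0$. The permutations $\sigma\in\mathfrak{S}_n$ with $\sigma(1)=1$ are in bijection with the permutations $\sigma'$ of $\{2,\dots,n\}$, obtained by restriction, and under this bijection $1$ is a fixed point of $\sigma$, so that $\#(\sigma)=\#(\sigma')+1$. Combining this with the definition of the Ewens distribution and the classical identity $\sum_{\tau\in\mathfrak{S}_m}\theta^{\#(\tau)}=\prod_{i=0}^{m-1}(\theta+i)$, I get
\begin{align*}
\mathbb{P}(\sigma_n(1)=1)=\frac{\sum_{\sigma'\in\mathfrak{S}_{n-1}}\theta_n^{\#(\sigma')}}{\prod_{i=1}^{n-1}(\theta_n+i)}=\frac{\prod_{i=0}^{n-2}(\theta_n+i)}{\prod_{i=1}^{n-1}(\theta_n+i)}=\frac{\theta_n}{\theta_n+n-1}.
\end{align*}
(Equivalently, one may invoke the classical fact that the expected number of fixed points of an Ewens$(\theta_n)$ permutation equals $n\theta_n/(\theta_n+n-1)$ and divide by $n$, using \eqref{h1} to pass from the expected count of fixed points to the probability that $1$ in particular is fixed.) When $\theta_n=0$ the permutation $\sigma_n$ is almost surely an $n$-cycle, so for $n\geq 2$ we have $\mathbb{P}(\sigma_n(1)=1)=0$, which is also the value produced by the formula above.

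Finally, writing $\dfrac{\theta_n}{\theta_n+n-1}=\dfrac{\theta_n/n}{\theta_n/n+1-1/n}$ and using the assumption $\theta_n/n\to 0$, I conclude that $\mathbb{P}(\sigma_n(1)=1)\to 0$; hence \eqref{h3} holds, Theorem \ref{thm2} applies, and \eqref{main} follows. I do not expect any genuine obstacle here: the proof is short, and the only points requiring a little care are the exact evaluation of $\mathbb{P}(\sigma_n(1)=1)$ via the cycle-counting bijection and the separate treatment of the degenerate case $\theta_n=0$.
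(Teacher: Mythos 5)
Your proof is correct and follows essentially the same route as the paper: both reduce the corollary to verifying hypothesis \eqref{h3} for Theorem \ref{thm2} via the exact formula $\mathbb{P}(\sigma_n(1)=1)=\theta_n/(\theta_n+n-1)$. The only (immaterial) difference is that you derive this formula directly from the normalization constant $\sum_{\tau\in\mathfrak{S}_m}\theta^{\#(\tau)}=\prod_{i=0}^{m-1}(\theta+i)$, whereas the paper obtains $\mathbb{P}(\sigma_n(n)=n)$ from the Chinese restaurant process and then invokes conjugation invariance.
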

\begin{proof} % the counter part for permutation for kingmann .... 
Using the Chinese restaurant process interpretation of the Ewens measures, see for example \citep*[Part II~Section 11]{aldous},  we have 
\begin{align*}
\mathbb{P}(\sigma_n(n)=n)=\frac{\theta_n}{\theta_n+n-1}.
\end{align*}
By the stability under conjugation, 
\begin{align*}% supprimer with respect to the counting measure,  partout
\lim_{n\to \infty} \mathbb{P}(\sigma_n(1)=1)=\lim_{n\to \infty} \mathbb{P}(\sigma_n(n)=n)=\lim_{n\to \infty}\frac{\theta_n}{\theta_n+n-1}\leq \lim_{n\to \infty} \frac{\theta_n}{n-1}= 0.
\end{align*}
We can now conclude using Theorem \ref{thm2}.
\end{proof}
When $\theta_n=0$ (the uniform measure on permutations having a unique cycle), we have a stronger result. For all positive integers $n$ and $m$ such that $m\geq n+2$,  for all $A  \subset \{1,\dots,n\}$, 
\begin{equation*} 
\mathbb{P}(A \subset D(\sigma_m))=\det([k_0(j-i)]_{i,j \in A}).
\end{equation*}
In other terms, in this case,  the restriction of the descent process of $\sigma_{n+2}$ to $\{1,2,\dots,n\}$  is determinantal  with kernel $K_0$. This result is a direct consequence of the main result of \cite{ELIZALDE2011688}. 

\subsection{Virtual permutations }
\label{vpsec}
\paragraph*{} We give in this subsection another application of previous theorems. Virtual permutations  are introduced by \cite*{MR1218259} as the projective limit of $\mathfrak{S}_n$. We are interested in this article only  in random virtual permutations  stable under conjugation also known as central measures as defined and  totally characterized   by \cite{TSILEVICH1998StationaryMO}. Those measures are the counterpart for random permutations of the Kingman exchangeable random partitions \citep*{10.2307/2984986,Kingman1}.
\paragraph*{}
Let $n$ be a positive integer and $\pi_n$ be the projection of $\mathfrak{S}_{n+1}$ on $\mathfrak{S}_n$ obtained by removing $n+1$ from the cycles' structure of the permutation. For example, $$\pi_3((1,\,3)\,(2,\,4))=\pi_3((1,\,4,\,3)\,(2))=\pi_3((1,\,3)\,(2)\,(4))=(1,\,3)\,(2).$$ 
We define the space of virtual permutations $\mathfrak{S}^\infty$ as the projective limit of $\mathfrak{S}_n$ as $n$ goes to infinity:
\begin{equation*}
\mathfrak{S}^\infty:=\{ (\hat\sigma_n)_{n\geq 1};\;\forall n\geq 1,\; \pi_n(\hat\sigma_{n+1})=\hat\sigma_n\}=\lim_{\longleftarrow }\mathfrak{S}_n.
\end{equation*} 
\paragraph{}Therefore, a  random  virtual permutation  is a sequence  $(\sigma_n)_{n \geq 1}$ of random permutations such that ${\pi_n(\sigma_{n+1})\overset{a.s}=\sigma_n}$.
We say that it is stable under conjugation if for all positive integer $n$, $\sigma_n$ is stable under conjugation. In this case, the number of cycles can be expressed in terms of probabilities of fixed points.
\begin{corollary}\label{2.3} 
Let $(\sigma_n)_{n\geq 1}$ be  a random virtual permutation stable under conjugation. Assume that 
\begin{align} \label{vpeq52} \tag{H'4}
\lim_{n\to \infty} \mathbb{P}(\sigma_n(1)=1)=0.
\end{align}
Then we have the  Vershik-Kerov-Logan-Shepp limiting shape \eqref{VC}. Moreover, if 
\begin{align} \label{vpeq5} \tag{H''2}
\mathbb{P}(\sigma_n(1)=1)=o\left({n^{-\frac{5}{6}}}\right).
\end{align}
Then we have Tracy-Widom fluctuations \eqref{TW} and the convergence at the edge to the Airy ensemble \eqref{TW2}.
\end{corollary}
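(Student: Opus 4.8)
The plan is to deduce the three conclusions straight from Theorem~\ref{VCthm}, Theorem~\ref{the1} and Theorem~\ref{Airyens}: hypothesis \eqref{h1} holds by assumption for a conjugation-invariant virtual permutation, so the only thing to check is \eqref{H4} for the first claim and \eqref{h2} for the second. Both are statements about the size of $\#(\sigma_n)$, and the bridge to fixed points is the elementary remark that, by \eqref{h1}, every position of $\sigma_k$ has the same probability of being fixed: conjugating by any $\rho\in\mathfrak{S}_k$ with $\rho(1)=j$ gives $\mathbb{P}(\sigma_k(j)=j)=\mathbb{P}(\sigma_k(1)=1)=:p_k$.

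First I would establish the key identity. Since $\pi_k$ erases $k+1$ from the cycle structure, removing $k+1$ from $\sigma_{k+1}$ either shortens by one the cycle containing it (when that cycle has length at least $2$), leaving the number of cycles unchanged, or deletes it as a $1$-cycle, decreasing the number of cycles by one; the latter happens precisely when $\sigma_{k+1}(k+1)=k+1$. Hence $\#(\sigma_{k+1})=\#(\sigma_k)+\mathbbm{1}_{\{\sigma_{k+1}(k+1)=k+1\}}$, and telescoping from $\sigma_1$, which has a single cycle, gives
\[
\#(\sigma_n)=\sum_{k=1}^{n}\mathbbm{1}_{\{\sigma_k(k)=k\}},\qquad\text{so}\qquad\mathbb{E}\bigl(\#(\sigma_n)\bigr)=\sum_{k=1}^{n}p_k .
\]

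For the first assertion, \eqref{vpeq52} says $p_k\to 0$, so Cesàro averaging gives $\tfrac1n\sum_{k=1}^{n}p_k\to 0$, whence $\mathbb{P}(\#(\sigma_n)>\varepsilon n)\le \varepsilon^{-1}n^{-1}\sum_{k=1}^{n}p_k\to 0$ by Markov's inequality; this is \eqref{H4}, and Theorem~\ref{VCthm} then yields \eqref{VC}. For the second assertion, \eqref{vpeq5} says $p_k=o(k^{-5/6})$; splitting $\sum_{k=1}^{n}p_k$ at a large index and using $\sum_{k=1}^{n}k^{-5/6}\sim 6\,n^{1/6}$ gives $\sum_{k=1}^{n}p_k=o(n^{1/6})$, so $\mathbb{P}(\#(\sigma_n)>\varepsilon n^{1/6})\le \varepsilon^{-1}n^{-1/6}\sum_{k=1}^{n}p_k\to 0$ by Markov's inequality; this is \eqref{h2}, and Theorem~\ref{the1} together with Theorem~\ref{Airyens} yield \eqref{TW} and \eqref{TW2}.

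The only point requiring care is the cycle-counting identity, namely distinguishing the case where deleting $n+1$ merely shortens a cycle of length at least $2$ from the case where it removes a $1$-cycle; granting this, the argument reduces to Markov's inequality plus the elementary asymptotics of $\sum k^{-5/6}$, so no real obstacle remains.
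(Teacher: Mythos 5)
Your proposal is correct and follows essentially the same route as the paper: both rest on the identity $\#(\sigma_n)=\sum_{k=1}^{n}\mathbbm{1}_{\{\sigma_k(k)=k\}}$ coming from the projection $\pi_k$, the conjugation-invariance observation $\mathbb{P}(\sigma_k(k)=k)=\mathbb{P}(\sigma_k(1)=1)$, and then Markov's inequality to verify \eqref{H4} resp.\ \eqref{h2} before invoking Theorems~\ref{VCthm} and~\ref{Airyens}. You merely make explicit some steps the paper leaves implicit (the Cesàro averaging and the tail estimate showing $\sum_k p_k=o(n^{1/6})$), which is fine.
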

 \begin{proof}
 By construction,  for all random virtual permutation $(\sigma_n)_{n\geq 1}$ and for all positive integer $n$,  
 \begin{equation*}
\#(\sigma_n)= \#(\pi_n(\sigma_{n+1}))=\#(\sigma_{n+1})-\mathbbm{1}_{\sigma_{n+1}(n+1)=n+1}.\end{equation*}
Consequently, 
\begin{align*} \mathbb{E}(\#(\sigma_{n}))=\sum_{i=1}^n \mathbb{P}(\sigma_i(i)=i)=\sum_{i=1}^n \mathbb{P}(\sigma_i(1)=1).\end{align*}
Moreover, under the hypothesis \eqref{vpeq5} we have
\begin{equation*}
\sum_{i=1}^n \mathbb{P}(\sigma_i(1)=1) =o(n^\frac{1}{6}).
\end{equation*}
 We can then conclude using Theorem \ref{Airyens}. Similarly,  using the hypothesis \eqref{vpeq52} we obtain:
\begin{equation*}
\sum_{i=1}^n \mathbb{P}(\sigma_i(1)=1) =o(n).
\end{equation*}  
We can then conclude using Theorem \ref{VCthm}.
 \end{proof}

According to \citep*[Section 2]{TSILEVICH1998StationaryMO} there exists a one-to-one correspondence between  
the set of probability distributions on $\mathfrak{S}^\infty$ stable under conjugation and the set of probability distributions on  \begin{equation*}
\Sigma:=\left\{(x_i)_{i\geq 1};\, x_1 \geq x_2 \geq  \dots\geq0,\; \sum_i {x_i}\leq 1 \right\}.
\end{equation*}
Let $0\leq a\leq 1$. We denote
 \begin{equation*}
\Sigma_a:=\left\{(x_i)_{i\geq 1};\, x_1 \geq x_2 \geq  \dots\geq0, \; \sum_i {x_i}= a \right\}.
\end{equation*}
Let $\nu$ be a probability measure on $\Sigma$. We denote by $(\sigma^{\nu}_n)_{n\geq 1}$ a random virtual permutation stable under conjugation such that the associated distribution on $\Sigma$ is $\nu$. 
We will study this correspondence in three parts:
\begin{itemize}
\item Let $x=(x_i)_{i\geq 1} \in \Sigma_1$.  If $\nu=\delta_x$, then for all positive integer $n$, for all $\sigma\in \mathfrak{S}_n$,
\begin{align} \label{deferg}
f(n,x,\sigma):=\mathbb{P}(\sigma_n^{\delta_x}=\sigma)= \prod_{j\geq 1}\frac{r_j!}{((j-1)!)^{r_j}}\sum_{m}\prod_{i\geq1}x_i^{m_i}.
\end{align}
Here, $r_j$ is the number of cycles of length $j$ of $\sigma$ and the sum is over all sequences of non-negative integers $m=(m_i)_{i\geq1}$ such that $\forall j\geq 1,|\{i;m_i=j\}|=r_j$. 
For more details, see \citep*[Section 2]{TSILEVICH1998StationaryMO}. 
% let x  : tels que sum =1 ::  definir sigma_1 avant m
\begin{corollary} \label{2.4}
If  $x_n=o(n^{-\alpha})$ with  $\alpha>6$,  then we have Tracy-Widom fluctuations \eqref{TW} and  the convergence at the edge to the Airy ensemble \eqref{TW2}.
\end{corollary}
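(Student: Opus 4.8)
The plan is to reduce to Corollary~\ref{2.3}: since $(\sigma_n^{\delta_x})_{n\geq 1}$ is by construction a random virtual permutation stable under conjugation, it suffices to check that it satisfies \eqref{vpeq5}, i.e. $\mathbb{P}(\sigma_n^{\delta_x}(1)=1)=o(n^{-5/6})$, and then \eqref{TW} and \eqref{TW2} will follow. The first step is to make this probability explicit. Because $x\in\Sigma_1$, the central measure $\delta_x$ is described by a paintbox: one assigns to each of $1,\dots,n$ an independent label in $\mathbb{N}^*$, label $k$ being picked with probability $x_k$; the cycles of $\sigma_n^{\delta_x}$ are then the label classes, each endowed with a uniformly random cyclic order (this is precisely what the prefactor $\prod_{j\geq 1}\frac{r_j!}{((j-1)!)^{r_j}}$ in \eqref{deferg} encodes), see \citep*[Section~3]{TSILEVICH1998StationaryMO}. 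Consequently $1$ is a fixed point of $\sigma_n^{\delta_x}$ if and only if no other integer receives $1$'s label, so that
\begin{equation*}
\mathbb{P}\!\left(\sigma_n^{\delta_x}(1)=1\right)=\sum_{k\geq 1}x_k(1-x_k)^{n-1}=:S_n .
\end{equation*}

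The second step is an elementary estimate of $S_n$. Since $x_k k^{\alpha}\to 0$, the constant $C:=\sup_{k\geq 1}x_k k^{\alpha}$ is finite, hence $x_k\leq Ck^{-\alpha}$ for every $k$. Using $(1-t)^{n-1}\leq e^{-(n-1)t}$ and $\max_{t\geq 0}t\,e^{-(n-1)t}=\frac{1}{e(n-1)}$, and splitting the sum at $K_n:=\lceil n^{1/\alpha}\rceil$, one gets for $n\geq 2$
\begin{equation*}
S_n\leq \sum_{k\leq K_n}\frac{1}{e(n-1)}+\sum_{k> K_n}Ck^{-\alpha}\leq \frac{K_n}{e(n-1)}+\frac{C}{\alpha-1}\,K_n^{1-\alpha}=O\!\left(n^{\frac{1}{\alpha}-1}\right).
\end{equation*}
Since $\alpha>6$ gives $\frac{1}{\alpha}-1<-\frac{5}{6}$, this bound is $o(n^{-5/6})$, which is exactly \eqref{vpeq5}.

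Applying Corollary~\ref{2.3} then finishes the proof, yielding the Tracy–Widom fluctuations \eqref{TW} and the convergence at the edge to the Airy ensemble \eqref{TW2}. I expect the only genuinely delicate point to be the first step, namely justifying rigorously that the label-class partition coincides with the cycle partition of $\sigma_n^{\delta_x}$ (equivalently, extracting the expected number of fixed points of $\sigma_n^{\delta_x}$ from \eqref{deferg}); once this identity is in hand, the remaining tail estimate is a one-line split-the-sum argument, and the threshold $\alpha>6$ is precisely what makes the balanced bound $O(n^{1/\alpha-1})$ drop below $n^{-5/6}$.
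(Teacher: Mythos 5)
Your proof is correct. It differs from the paper's only in which link of the chain it enters: the paper bounds the expected cycle count directly, writing $\mathbb{E}\bigl(\#\bigl(\sigma^{\delta_x}_n\bigr)\bigr)=\sum_{i\geq 1}\bigl(1-(1-x_i)^n\bigr)$, splits that sum at $n^{\beta}$ for a suitable $\beta<\tfrac16$, concludes it is $o(n^{1/6})$, and invokes Theorem \ref{Airyens} via \eqref{h2}; you instead bound the single-level fixed-point probability $\mathbb{P}(\sigma^{\delta_x}_n(1)=1)=\sum_k x_k(1-x_k)^{n-1}$ by $O(n^{1/\alpha-1})=o(n^{-5/6})$ and feed it into Corollary \ref{2.3}, whose proof sums exactly these probabilities back up to $\mathbb{E}(\#(\sigma_n))=o(n^{1/6})$ before applying Theorem \ref{Airyens}. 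Since $\sum_{i=1}^n\sum_k x_k(1-x_k)^{i-1}=\sum_k\bigl(1-(1-x_k)^n\bigr)$, the two routes control the same quantity, and your split at $K_n=\lceil n^{1/\alpha}\rceil$ together with $t\,e^{-(n-1)t}\leq \frac{1}{e(n-1)}$ is the same kind of tail estimate as the paper's. The paintbox identity you flag as the delicate point is indeed the one the paper itself uses (it appears verbatim in Step 1 of the proof of Theorem \ref{5}, via the circle-insertion description at the start of Subsection \ref{appli}), so nothing further is needed there; your version has the small advantage of reusing Corollary \ref{2.3} rather than re-deriving the cycle-count bound, at the cost of passing through the slightly stronger-looking pointwise hypothesis \eqref{vpeq5}.
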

 \begin{corollary} \label{2.4p} If  $x_n=o(n^{-\alpha})$ with  $\alpha>1$, then we have the  Vershik-Kerov-Logan-Shepp limiting shape \eqref{VC}.
\end{corollary}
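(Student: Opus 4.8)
The plan is to deduce the statement from Corollary~\ref{2.3}: since $(\sigma_n^{\delta_x})_{n\ge1}$ is a random virtual permutation that is stable under conjugation, it suffices to verify hypothesis \eqref{vpeq52}, i.e. that $\mathbb{P}(\sigma_n^{\delta_x}(1)=1)\to0$, after which the Vershik--Kerov--Logan--Shepp shape \eqref{VC} follows at once.

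The first step is to obtain an exact expression for $\mathbb{P}(\sigma_n^{\delta_x}(1)=1)$. By the construction of $\sigma_n^{\delta_x}$ recalled after~\eqref{deferg} (see \citep*[Section~3]{TSILEVICH1998StationaryMO}), the cycles of $\sigma_n^{\delta_x}$ are obtained by colouring $\{1,\dots,n\}$ with independent colours, colour $i$ being chosen with probability $x_i$, and endowing each colour class with a uniform random cyclic order; in particular an element is a fixed point precisely when its colour is carried by no other element of $\{1,\dots,n\}$. Counting fixed points, taking expectations, and using conjugation invariance (which makes $\mathbb{P}(\sigma_n^{\delta_x}(k)=k)$ independent of $k$) gives
\begin{equation*}
\mathbb{P}(\sigma_n^{\delta_x}(1)=1)=\sum_{i\ge1}x_i(1-x_i)^{n-1}.
\end{equation*}
The same identity can also be read off from the projective recursion $\mathbbm{1}_{\sigma_{n+1}(n+1)=n+1}=\mathbbm{1}\{\text{colour of }n+1\text{ is not among the first }n\}$ used in the proof of Corollary~\ref{2.3}, or extracted directly from the cycle-type formula~\eqref{deferg}.

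The second step is an elementary estimate of this sum under $x_n=o(n^{-\alpha})$. Let $C:=\sup_{i\ge1}i^{\alpha}x_i$, which is finite, and split the sum at $N:=\lceil n^{1/\alpha}\rceil$. For $i\le N$ we bound $x_i(1-x_i)^{n-1}\le x_i e^{-(n-1)x_i}\le\frac{1}{e(n-1)}$, so this part is at most $\frac{N}{e(n-1)}=O(n^{1/\alpha-1})$. For $i>N$ we use $x_i(1-x_i)^{n-1}\le x_i\le Ci^{-\alpha}$, so this part is at most $\frac{C}{\alpha-1}N^{1-\alpha}=O(n^{1/\alpha-1})$. Hence $\mathbb{P}(\sigma_n^{\delta_x}(1)=1)=O(n^{1/\alpha-1})\to0$ since $\alpha>1$, so \eqref{vpeq52} holds and Corollary~\ref{2.3} yields \eqref{VC}. (The quantitative rate $O(n^{1/\alpha-1})$ is exactly what the companion Corollary~\ref{2.4} needs, $\alpha>6$ making it $o(n^{-5/6})$, which is hypothesis~\eqref{vpeq5}; and since $x\in\Sigma_1$ forces $\sum_ix_i=1<\infty$, dominated convergence already gives $\mathbb{P}(\sigma_n^{\delta_x}(1)=1)\to0$ even without any decay assumption.)

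I expect the only genuinely delicate point to be making the colour-class description of $\sigma_n^{\delta_x}$ precise enough to justify the fixed-point formula --- in particular, checking that the colouring together with the uniform cyclic order is consistent under the projections $\pi_n$ and does realise the central measure $\delta_x$; once that is granted, the remaining tail estimate is routine.
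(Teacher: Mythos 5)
Your proof is correct, and it takes a slightly different route from the paper's. The paper proves Corollary~\ref{2.4p} by bounding the expected number of cycles directly: using the circle construction it writes $\mathbb{E}(\#(\sigma^{\delta_x}_n))=\sum_{i\geq1}(1-(1-x_i)^n)$, splits this sum at $n^{1/\alpha}$ to get the bound $o(n)$, and then invokes Theorem~\ref{VCthm} (whose hypothesis \eqref{H4} follows by Markov's inequality). You instead verify the fixed-point condition \eqref{vpeq52} of Corollary~\ref{2.3}, computing $\mathbb{P}(\sigma^{\delta_x}_n(1)=1)=\sum_{i\geq1}x_i(1-x_i)^{n-1}$ --- a formula the paper itself establishes and uses in Step~1 of the proof of Theorem~\ref{5} --- and estimating it by the same kind of split at $n^{1/\alpha}$. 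Since Corollary~\ref{2.3} is deduced in the paper from the identity $\mathbb{E}(\#(\sigma_n))=\sum_{i=1}^n\mathbb{P}(\sigma_i(1)=1)$, the two arguments are ultimately Ces\`aro-equivalent, but yours has two small advantages: it yields the quantitative rate $O(n^{1/\alpha-1})$ that simultaneously settles Corollary~\ref{2.4} when $\alpha>6$ (via \eqref{vpeq5}), and your closing parenthetical observation is sharper than the stated corollary --- for any $x\in\Sigma_1$, dominated convergence alone gives $\sum_i x_i(1-x_i)^{n-1}\to0$, so \eqref{VC} holds with no decay assumption on $(x_i)$ whatsoever, exactly as Step~1 of the proof of Theorem~\ref{5} shows. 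The consistency of the colouring description with the projections $\pi_n$, which you flag as the delicate point, is precisely the content of the Tsilevich correspondence the paper cites, so nothing further is needed there.
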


We give a proof of Corollary \ref {2.4} and Corollary \ref{2.4p} in Subsection~\ref{appli}. A trivial application of these corollaries is when $x_i=\delta_1{(i)}$. In this case, $\sigma^{\delta_x}_n$ follows the Ewens distribution with parameter $\theta=0$.
\item If $\nu(\Sigma_1)=1$, $\nu$ is  called a 1-measure. In this case,  the distribution of $(\sigma^\nu_n)_{n \geq 1} $ is a mixture of the previous  distributions i.e.
for all positive integer $n$, for all $\sigma\in \mathfrak{S}_n$,
\begin{align}\label{onem} 
\mathbb{P}(\sigma^\nu_n=\sigma)=\int_{x\in\Sigma_1}f(n,x,\sigma)d\nu(x).
\end{align}

\begin{corollary} \label{2.5p}
Assume that $\nu$ is a 1-measure and 
\begin{align*}
\int_{x\in \Sigma_1}\sum_{i=1}^\infty \left(1-(1-x_i)^n\right) d\nu(x) =o\left(n^\frac{1}{6}\right),
\end{align*}
then  we have Tracy-Widom fluctuations \eqref{TW} and the convergence at the edge to the Airy ensemble \eqref{TW2}.
\end{corollary}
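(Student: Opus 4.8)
The plan is to verify hypotheses \eqref{h1} and \eqref{h2} for $(\sigma_n^\nu)_{n\geq1}$ and then invoke Theorem \ref{Airyens}. Hypothesis \eqref{h1} is free: for each $x\in\Sigma_1$ the permutation $\sigma_n^{\delta_x}$ is stable under conjugation (this is built into the notion of a central measure), and by \eqref{onem} the $1$-measure $\nu$ makes $\sigma_n^\nu$ a mixture of the $\sigma_n^{\delta_x}$, hence conjugation-invariant as well. So everything reduces to controlling $\#(\sigma_n^\nu)$.

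First I would compute $\mathbb{E}(\#(\sigma_n^{\delta_x}))$ for a fixed $x=(x_i)_{i\geq1}\in\Sigma_1$. The permutation $\sigma_n^{\delta_x}$ admits the following Kingman-paintbox realization, which one can read off directly from \eqref{deferg}: assign to the points $1,\dots,n$ i.i.d.\ ``colours'' in $\mathbb{N}^*$ with $\mathbb{P}(\mathrm{colour}=i)=x_i$ --- there is no dust since $\sum_i x_i=1$ --- and then replace each colour class by a uniformly random cyclic order on it. In this description each cycle of $\sigma_n^{\delta_x}$ corresponds to exactly one non-empty colour class, so $\#(\sigma_n^{\delta_x})$ is the number of colours actually used. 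By the standard occupancy identity, colour $i$ is used with probability $1-(1-x_i)^n$, hence
\[
\mathbb{E}\bigl(\#(\sigma_n^{\delta_x})\bigr)=\sum_{i=1}^\infty\bigl(1-(1-x_i)^n\bigr),
\]
the sum being finite since it is bounded by $n\sum_i x_i=n$.

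Next, integrating against $\nu$ and using \eqref{onem}, linearity of expectation gives
\[
\mathbb{E}\bigl(\#(\sigma_n^{\nu})\bigr)=\int_{x\in\Sigma_1}\sum_{i=1}^\infty\bigl(1-(1-x_i)^n\bigr)\,d\nu(x)=o\bigl(n^{1/6}\bigr)
\]
by hypothesis, so Markov's inequality yields $\mathbb{P}\bigl(\#(\sigma_n^\nu)>\varepsilon n^{1/6}\bigr)\leq\varepsilon^{-1}n^{-1/6}\,\mathbb{E}(\#(\sigma_n^\nu))\to0$ for every $\varepsilon>0$, which is exactly \eqref{h2}. Applying Theorem \ref{Airyens} then delivers \eqref{TW} and \eqref{TW2}.

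The only genuine content is the first step --- pinning down the paintbox description of $\delta_x$ precisely enough that $\#(\sigma_n^{\delta_x})$ becomes a clean occupancy count; after that the argument is just linearity of expectation followed by Markov's inequality, so I expect no real obstacle. As a consistency check, if $x_n=o(n^{-\alpha})$ then $\sum_i(1-(1-x_i)^n)\leq\sum_i\min(1,nx_i)=O(n^{1/\alpha})$, which is $o(n^{1/6})$ precisely when $\alpha>6$; thus Corollary \ref{2.5p} indeed recovers Corollary \ref{2.4}.
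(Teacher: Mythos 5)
Your proposal is correct and follows essentially the same route as the paper: the author likewise uses the paintbox/circle realization of $\sigma_n^{\delta_x}$ to get $\mathbb{E}(\#(\sigma_n^{\delta_x}))=\sum_{i}(1-(1-x_i)^n)$ for $x\in\Sigma_1$, integrates against $\nu$ via \eqref{onem}, and concludes with Markov's inequality and Theorem \ref{Airyens}. No substantive differences.
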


\begin{corollary} \label{2.5}
Assume that $\nu$ is a 1-measure. We have then the  Vershik-Kerov-Logan-Shepp limiting shape  \eqref{VC}. \end{corollary}

\paragraph*{}
We will prove Corollary \ref{2.5p} and Corollary \ref{2.5} in Subsection \ref{appli}.
To explain the relation with the  Ewens distributions, we need first to introduce the Poisson-Dirichlet distributions. Let $\theta>0$ and let ${1\geq x_1 \geq x_2 \geq\dots\geq 0}$ be a Poisson point  process on $(0,1]$ with intensity $\lambda(t)=\frac{\theta\exp(-t)}{t}$.
We define the random variable $S:=\sum_{i\geq 1} x_i$. It is proved that the sum $S$ is almost surely finite.
We can find a  proof for example in  \citep*{Holst01thepoisson-dirichlet}. The point process  $\hat{x}:=\left(\frac{x_i}{S}\right)_{i\geq1}$ defines a measure on $\Sigma_1$ known as the Poisson-Dirichlet distribution with parameter $\theta$. It was introduced by \cite*{10.2307/2984986} and it is a useful tool to study some problems of combinatorics, analytic number theory,
statistics and population genetics. See \citep*{0898711665,Donnelly1993,arratia,9780821898543}.
\paragraph*{}
The Poisson-Dirichlet distribution with parameter $\theta>0$ represents also the limiting distribution of normalized cycles' lengths of the Ewens distribution with the same parameter, see \citep*{arratia}. As a consequence, using the description of these measures in \citep*[Section 2]{TSILEVICH1998StationaryMO}, if  $\nu$ follows the Poisson-Dirichlet distribution with parameter $\theta$,  $\sigma^\nu_n$  follows the  Ewens measure with same parameter $\theta$. In this case, the hypotheses of Corollaries \ref{2.5p} and \ref{2.5} are satisfied.
\item In the general case, the correspondence is given by the formula: 
\begin{align*}
\mathbb{P}(\sigma^\nu_n=\sigma)=\int_{x\in\Sigma}f(n,x,\sigma)d\nu(x),
\end{align*}
where
\begin{align}\label{defdef}
f(n,x,\sigma):=\begin{cases} \prod_{j\geq 1}\frac{r_j!}{((j-1)!)^{r_j}}\sum_{m}\prod_{i\geq1}x_i^{m_i}
 & \text{ if } \sum_{i=1}^\infty(x_i)=1
 \\ \sum_{j=0}^{l} \binom{l}{j} x_0^j(1-x_0)^{n-j}  f(n-j,\,y,\,\sigma^j)
 & \text{ if } 0<\sum_{i=1}^\infty(x_i)<1
 \\ \mathbbm{1}_{\sigma=Id_n}  & \text{ if } \sum_{i=1}^\infty(x_i)=0
\end{cases}.
\end{align}
Here, $r_j$ is the number of cycles of length $j$ of $\sigma$ and the sum is over all sequences of non-negative integers $m=(m_i)_{i\geq1}$ such that $\forall j\geq 1,|\{i;m_i=j\}|=r_j$, $y:=\frac{x}{\sum_i x_i}$, $x_0:=1-\sum_{i=1}^\infty x_i$, $l$ is the number of fixed points of $\sigma$, $\sigma^j$ is the permutation obtained by removing $j$ fixed points of $\sigma$ and $Id_n$ is the identity of $\mathfrak{S}_n$.
For more details, we recommend \citep*[Section 2]{TSILEVICH1998StationaryMO}.
\paragraph*{} In the general case, we do not expect the Tracy-Widom fluctuations neither for $\ell$ nor for $\underline{\ell}$ (see Section~\ref{diss}). We limit then our study to the case where there exists $0<x_0<1$ such that $\nu(\Sigma_{1-x_0})=1$.
Unlike all  previous examples when $\ell(\sigma_n)$ and $\underline\ell(\sigma_n)$ have the same asymptotic fluctuations, in this case, the expected length of the longest increasing subsequence is larger than $(1-x_0)n$ and   we will show that there exist some cases   where   the expected length of the longest decreasing  subsequence is asymptotically proportional to $\sqrt{n}$ with Tracy-Widom fluctuations.   
\begin{corollary} \label{2.6}
Let $0<x_0<1$ and $\nu$  be a probability measure on $\Sigma$ satisfying $\nu\left(\Sigma_{1-x_0}\right)=1$. Let $\hat\nu$ be the 1-measure such that $d\hat\nu(x)=d\nu\left(\frac{x}{1-x_0}\right)$. If there exists a positive integer $k$ such that  for all real numbers $s_1,s_2,\dots,s_k$, 
\begin{equation*} 
\lim_{n\to \infty} \mathbb{P}\left(\forall 1\leq i \leq k,\;\frac{\lambda'_i(\sigma^{\hat{\nu}}_n)-2\sqrt{n}}{n^\frac 16}\leq s_i\right)=F_{2,k}(s_1,\,\dots,\,s_k),
\end{equation*}
then for all real numbers $s_1,s_2,\dots,s_k$, 
\begin{equation*} 
\lim_{n\to \infty} \mathbb{P}\left(\forall 1\leq i \leq k,\;\frac{\lambda'_i(\sigma^\nu_n)-2\sqrt{(1-x_0)n}}{((1-x_0)n)^\frac 16}\leq s_i\right)=F_{2,k}(s_1,\,\dots,\,s_k).
\end{equation*}
In particular, for all real $s$,
\begin{equation*} 
\lim_{n\to \infty} \mathbb{P}\left(\frac{\underline\ell(\sigma^\nu_n)-2\sqrt{(1-x_0)n}}{((1-x_0)n)^\frac 16}\leq s\right)=F_2(s).
\end{equation*}
\end{corollary}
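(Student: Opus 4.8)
The plan is to realize $\sigma^\nu_n$ through the colouring construction that underlies the recursion \eqref{defdef}, to compare it with the auxiliary $1$-measure permutation $\sigma^{\hat\nu}_m$ via Greene's theorem, and then to substitute the random size of the ``bulk'' for $m$. Since $\nu(\Sigma_{1-x_0})=1$, every $x$ in the support of $\nu$ satisfies $\sum_i x_i=1-x_0$, so the dust mass $1-\sum_i x_i$ equals $x_0$ almost surely; in particular $\hat\nu$, the push-forward of $\nu$ under $x\mapsto x/(1-x_0)$, is a $1$-measure. Reading off \eqref{defdef}, $\sigma^\nu_n$ can be sampled as follows: colour each element of $\{1,\dots,n\}$ independently, declaring it \emph{dust} with probability $x_0$; writing $D$ for the set of dust elements and $B:=\{1,\dots,n\}\setminus D$, each element of $D$ is a fixed point of $\sigma^\nu_n$, while, conditionally on $B$, the permutation that $\sigma^\nu_n$ induces on $B$ (after the order-preserving relabelling of $B$ onto $\{1,\dots,|B|\}$) has the law of $\sigma^{\hat\nu}_{|B|}$; the point here is that, the dust probability being the constant $x_0$ on the support of $\nu$, the colouring is independent of $x$, so the conditional law of $x$ given $B$ is still $\nu$. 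Since $|B|\sim\mathrm{Bin}(n,1-x_0)$ we have $|B|/n\to1-x_0>0$, hence $|B|\to\infty$ and $|B|-(1-x_0)n=O_{\mathbb P}(\sqrt n)$.

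Fix $k\ge1$. A decreasing subsequence of $\sigma^\nu_n$ contains at most one element of $D$: if $d_1<d_2$ lie in $D$ then $\sigma^\nu_n(d_1)=d_1<d_2=\sigma^\nu_n(d_2)$. Hence $k$ pairwise disjoint decreasing subsequences of $\sigma^\nu_n$ use at most $k$ elements of $D$ in total, and deleting those elements yields $k$ pairwise disjoint decreasing subsequences of $\sigma^\nu_n$ restricted to $B$; conversely, decreasing subsequences of the induced permutation on $B$ are decreasing subsequences of $\sigma^\nu_n$. By Greene's theorem, $\sum_{i=1}^k\lambda'_i(\tau)$ equals the maximal total length of $k$ pairwise disjoint decreasing subsequences of $\tau$, so the two inclusions give
\[
\sum_{i=1}^k\lambda'_i\!\left(\sigma^{\hat\nu}_{|B|}\right)\ \le\ \sum_{i=1}^k\lambda'_i\!\left(\sigma^\nu_n\right)\ \le\ \sum_{i=1}^k\lambda'_i\!\left(\sigma^{\hat\nu}_{|B|}\right)+k
\]
for every $k\ge1$. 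Taking successive differences, $\bigl|\lambda'_i(\sigma^\nu_n)-\lambda'_i(\sigma^{\hat\nu}_{|B|})\bigr|\le i\le k$ for all $i\le k$, deterministically, under the coupling above.

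Since $k$ is fixed and $((1-x_0)n)^{1/6}\to\infty$, the difference $\lambda'_i(\sigma^\nu_n)-\lambda'_i(\sigma^{\hat\nu}_{|B|})$ is negligible after rescaling; by a routine sandwiching that uses the continuity of $F_{2,k}$, it suffices to prove that $\bigl(\lambda'_i(\sigma^{\hat\nu}_{|B|})-2\sqrt{(1-x_0)n}\bigr)/((1-x_0)n)^{1/6}$ converges jointly over $i\le k$ in law to $(\xi_i)_{i\le k}$. Because $|B|-(1-x_0)n=O_{\mathbb P}(\sqrt n)$ one has $2\sqrt{|B|}-2\sqrt{(1-x_0)n}=O_{\mathbb P}(1)=o_{\mathbb P}(n^{1/6})$ and $|B|^{1/6}/((1-x_0)n)^{1/6}\to1$ in probability, so this is equivalent to the joint convergence of $\bigl(\lambda'_i(\sigma^{\hat\nu}_{|B|})-2\sqrt{|B|}\bigr)/|B|^{1/6}$ to $(\xi_i)_{i\le k}$. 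The corollary's hypothesis is precisely this statement with the deterministic index $m\to\infty$; to pass to the random index $|B|$ I would condition on $|B|$ and use an Anscombe-type argument. Setting $G_m(t_1,\dots,t_k):=\mathbb P\bigl(\forall i\le k,\ \lambda'_i(\sigma^{\hat\nu}_m)\le 2\sqrt m+m^{1/6}t_i\bigr)$, we have $G_m\to F_{2,k}$ pointwise, $0\le G_m\le1$, and $\mathbb P(|B|\le M)\to0$ for each fixed $M$; splitting $\mathbb E[G_{|B|}(t)]$ over $\{|B|\le M\}$ and $\{|B|>M\}$ and letting $M\to\infty$ gives $\mathbb E[G_{|B|}(t)]\to F_{2,k}(t)$. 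This is the first displayed limit of the corollary, and the statement for $\underline\ell(\sigma^\nu_n)=\lambda'_1(\sigma^\nu_n)$ is the case $k=1$, where $F_{2,1}=F_2$.

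The main obstacle is the combinatorial comparison in the middle step: the whole argument works only because passing from $\sigma^\nu_n$ to its bulk part costs $O(1)$ in each $\lambda'_i$. This is particular to decreasing subsequences—inserting many fixed points blows up the longest \emph{increasing} subsequence, which is exactly why the corollary (and Section~\ref{diss}) asserts Tracy--Widom behaviour only for $\lambda'$ and $\underline\ell$, not for $\lambda$ and $\ell$. Greene's theorem is what turns the elementary ``at most one fixed point per decreasing subsequence'' observation into a uniform $O(1)$ control of $\lambda'_1,\dots,\lambda'_k$ simultaneously; the remaining analytic point, substituting the binomially fluctuating index $|B|$, is then standard since $|B|/n\to1-x_0>0$.
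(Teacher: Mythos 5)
Your proof is correct and follows essentially the same route as the paper, which proves this corollary as a direct application of Proposition \ref{them11}: the dust/bulk colouring realization of $\sigma^\nu_n$, the observation that a decreasing subsequence meets at most one fixed point of the dust set, Greene's theorem to control $\sum_{i\le k}\lambda'_i$ up to an additive $k$, and then the substitution of the binomial index. Your Anscombe-style dominated-convergence treatment of the random index $|B|$ is just a cleaner packaging of the paper's explicit $\varepsilon$--$\delta$ sandwich over the window $|{|A_n|}-nx_0|<\alpha\sqrt n$; the two are equivalent.
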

This corollary is a direct application of Proposition \ref{them11}.
Here are some examples of measures $\nu$ that meet the assumptions of the previous corollary:   \begin{itemize}
\item  When $\nu=\delta_{x}$ and $x_i=o(\frac{1}{i^{6+\varepsilon}})$.
\item  When $d\nu(x)= dPD(\beta)(\frac{x}{\alpha})$, $\beta \geq 0$, $0<\alpha\leq1$  and $PD(\beta)$ is Poisson-Dirichlet distribution with parameter $\beta$. 
\end{itemize}
In fact:
\begin{itemize}
\item If $\nu=\delta_x$ and $x_i=o(\frac{1}{i^{6+\varepsilon}})$, then $\hat\nu=\delta_\frac{x}{\sum_{i\geq1}{x_i}}$   satisfies hypotheses of Corollary \ref{2.4}.
\item If $d\nu(x)= dPD(\beta)(\frac{x}{\alpha})$,  then $d\hat \nu(x)= dPD(\beta)(x)$ and $\hat\sigma_n$ follows the Ewens distribution with parameter $\beta$. We can then conclude using Corollary \ref{Airyens}. 
\end{itemize}
For the descent process, we have the following result:
\begin{theorem} \label{5}
 If there exists $0\leq x_0\leq 1$ such that $\nu(\Sigma_{1-x_0})=1$, then  for all finite set  $A \subset \mathbb{N}^*$, 
\begin{equation*} 
\lim_{n\to \infty} \mathbb{P}(A \subset D\left(\sigma^\nu_n)\right)=\det([k_{x_0}(j-i)]_{i,j \in A}),
\end{equation*}
with 
\begin{align}\label{keneldescvirt}
\sum_{l\in \mathbb{Z}} k_{x_0}(l)z^l=\frac{1}{1-(1+x_0z )e^{(1-x_0)z}}=\frac{-1}{z+\sum_{l=1}^\infty\hat{a}_l(x_0)z^{l+1}},
\end{align}
where 
\begin{align}\label{eqq7}
\hat{a}_l(x_0):=\frac{(1-x_0)^{l+1}}{(l+1)!}+\frac{x_0(1-x_0)^{l}}{l!}.
\end{align}
\end{theorem}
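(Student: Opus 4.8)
The plan is to combine the conjugation coupling already used for Theorems~\ref{the1} and~\ref{thm2} with one new input — that a macroscopic proportion $x_0$ of fixed points persists inside the scaling window — and then to match the resulting limiting point process with the determinant of the kernel \eqref{keneldescvirt}. First, by the mixture formula $\mathbb{P}(\sigma^\nu_n=\sigma)=\int f(n,x,\sigma)\,d\nu(x)$ and bounded convergence it suffices to treat $\nu=\delta_x$ with $x\in\Sigma_{1-x_0}$ fixed. Using the description \eqref{defdef}, $\sigma^{\delta_x}_n$ is built by declaring each of $1,\dots,n$ to be a ``dust'' fixed point independently with probability $x_0$, and letting the non-dust part be conjugation invariant with cycle lengths equal to the sizes of the colour classes drawn according to $(x_i/(1-x_0))_{i\ge1}$. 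Hence its number of fixed points is $K_n+R_n$, where $K_n\sim\mathrm{Bin}(n,x_0)$ counts the dust and $R_n$ the singleton colour classes; the law of large numbers gives $K_n/n\to x_0$, while $\mathbb{E}[R_n]=\sum_{i\ge1}nx_i(1-x_i)^{n-1}=o(n)$ by dominated convergence, so the proportion of fixed points of $\sigma^{\delta_x}_n$ tends to $x_0$ in probability; integrating over $\nu$ (supported on $\Sigma_{1-x_0}$) gives the same for $\sigma^\nu_n$. Thus Theorem~\ref{5} reduces to: \emph{if $\sigma_n$ is conjugation invariant and the proportion of its fixed points tends to $x_0$ in probability, then $\mathbb{P}(A\subset D(\sigma_n))\to\det([k_{x_0}(j-i)]_{i,j\in A})$.}

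\textbf{The coupling and the limiting model.} Write $\sigma_n\overset{d}{=}\rho^{-1}\tau_n\rho$ with $\rho$ uniform, $\tau_n\overset{d}{=}\sigma_n$ independent. Fix $A$, set $p=\max A$ and $W=\{1,\dots,p+1\}$, so that $D(\sigma_n)\cap A$ depends only on $(\sigma_n(i))_{i\in W}$. Since $i\in W$ is a fixed point of $\sigma_n$ iff $\rho(i)$ is one of $\tau_n$, and $(\rho(i))_{i\in W}$ are $|W|$ distinct uniform points, the indicator vector $(\mathbbm{1}_{\sigma_n(i)=i})_{i\in W}$ converges in law to i.i.d.\ $\mathrm{Bernoulli}(x_0)$ variables. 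Conditionally on $\rho|_W$, on $\tau_n$ and on its fixed-point set $F$, the permutation $\tau_n|_{F^c}$ is conjugation invariant and fixed-point free, so $\mathbb{P}(\tau_n(z)=z')=(|F^c|-1)^{-1}$ for $z\neq z'$ in $F^c$; hence, off an event of probability $O(|W|^2/n)$, the values $\tau_n(\rho(i))$ at the non-fixed $i\in W$ are distinct and avoid $\{\rho(i'):i'\in W\}$, and then $(\sigma_n(i))_{\sigma_n(i)\neq i}$ is a uniformly random injection of the non-fixed indices into $\{p+2,\dots,n\}$, because $\rho^{-1}$ on the complement of $\{\rho(i'):i'\in W\}$ is a uniform bijection onto $\{p+2,\dots,n\}$ independent of the rest. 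Letting $n\to\infty$, $(\sigma_n(i))_{i\in W}$ converges to the \emph{model}: each $i\in W$ is independently a fixed site (value $i$) with probability $x_0$ or a free site with probability $1-x_0$, the free values being i.i.d., all larger than every fixed value, with uniform relative order; therefore $\mathbb{P}(A\subset D(\sigma_n))\to\mathbb{P}(A\subset D_{\mathrm{mod}})$, where $i\in D_{\mathrm{mod}}$ iff $i$ is free and either $i+1$ is fixed or $i+1$ is free with a descent of the uniform order at $i$. (When $x_0=1$ the model is the identity, consistent with $k_1(l)=-\mathbbm{1}_{l=-1}$.)

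\textbf{Identifying the model with $k_{x_0}$.} It remains to prove $\mathbb{P}(A\subset D_{\mathrm{mod}})=\det([k_{x_0}(j-i)]_{i,j\in A})$. Inclusion--exclusion on ascents gives $\mathbb{P}(A\subset D_{\mathrm{mod}})=\sum_{S\subset A}(-1)^{|S|}\,\mathbb{P}(\text{every site of }S\text{ is an ascent})$, and the latter probability factorises over the maximal blocks of consecutive integers determined by $S$, a block forcing $r+1$ consecutive sites to be increasing and contributing $g_{r+1}$, where a run of sites is increasing exactly when its fixed sites precede its free sites and the free values increase; thus $\sum_{\ell\ge0}g_\ell z^\ell=\frac{e^{(1-x_0)z}}{1-x_0z}$. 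One then checks, by a generating-function computation, that this alternating sum is the announced determinant; equivalently, that the finite-$N$ version of the model has $\mathbb{E}[t^{1+\mathrm{des}}]$ with exponential generating function $\frac{t-1}{t-(1+x_0(t-1)z)e^{(1-x_0)(t-1)z}}$, from which the determinantal structure with kernel \eqref{keneldescvirt}--\eqref{eqq7} follows exactly as the uniform case yields $k_0$, the algebraic bridge being $(1+x_0z)e^{(1-x_0)z}=(1-x_0^2z^2)\,\frac{e^{(1-x_0)z}}{1-x_0z}$. The hard part will be this last step: the probabilistic content above is a routine adaptation of the machinery already developed in the paper (with the extra observation that a density $x_0$ of forced fixed points survives in the window), whereas pinning down the limiting correlation functions — computing the exponential generating function of $1+\mathrm{des}$ for the model permutation and recognising the kernel — is where the genuinely new bookkeeping lies.
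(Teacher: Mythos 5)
Your reduction to the limiting window model is sound and is, in substance, the same move the paper makes: the paper conditions on the event $E_n$ that the non-fixed images of the window $\{1,\dots,\max(A)+1\}$ escape the window and on which window points are ``dust'' fixed points, and shows by the exchangeability argument of Lemma \ref{lemmadesc} that the conditional law of the window is exactly your model --- independent $\mathrm{Bernoulli}(x_0)$ fixed sites, free values uniformly ordered and larger than everything in the window. Your coupling $\sigma_n\overset{d}{=}\rho^{-1}\tau_n\rho$ and the $o(n)$ bound on the non-dust fixed points are fine. The genuine gap is the last step, which you yourself flag as ``the hard part'': you never prove that $\mathbb{P}(A\subset D_{\mathrm{mod}})$ equals $\det([k_{x_0}(j-i)]_{i,j\in A})$. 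The inclusion--exclusion over ascent sets, the claimed Eulerian-type generating function $\frac{t-1}{t-(1+x_0(t-1)z)e^{(1-x_0)(t-1)z}}$, and the deduction of the determinantal structure from it are all asserted rather than carried out, and the ``algebraic bridge'' $(1+x_0z)e^{(1-x_0)z}=(1-x_0^2z^2)\frac{e^{(1-x_0)z}}{1-x_0z}$ is a tautological rewriting that by itself proves nothing. Since the determinantal identity is the conclusion of the theorem, the proof is incomplete as it stands.

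The missing step is not ``genuinely new bookkeeping'', though: it is precisely what Theorem \ref{bordin} (Borodin--Diaconis--Fulman), already quoted in the paper for this purpose, delivers. In your model a decreasing run over $k+1$ consecutive sites forces all free sites to precede the (necessarily at most one) fixed site and the free values to be in decreasing relative order, whence $\rho(\{i,\dots,i+k-1\})=\frac{(1-x_0)^{k+1}}{(k+1)!}+\frac{x_0(1-x_0)^{k}}{k!}=\hat a_k(x_0)$; and for a general finite $B=\bigcup_i B_i$ decomposed into maximal runs, $\rho(B)=\prod_i\hat a_{|B_i|}(x_0)$ because blocks at distance at least $2$ use disjoint, independent randomness. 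This exhibits the limiting descent process as stationary and $1$-dependent, and Theorem \ref{bordin} then yields the determinant with kernel $\frac{-1}{z+\sum_{l\geq1}\hat a_l(x_0)z^{l+1}}=\frac{1}{1-(1+x_0z)e^{(1-x_0)z}}$, using $\hat a_0(x_0)=1$. That is exactly the paper's route (it performs the block count via the sets $\mathfrak{E}_1,\mathfrak{E}_2$ for the particular measure $\nu_1$ after showing the answer is $\nu$-independent); you should replace your ascent-side generating-function sketch with this two-line computation in the model you already built.
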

The proof of this result we suggest  in Subsection \ref{descpr} consists in studying in a first step the case where the corresponding measure $\nu$ is concentrated on $\Sigma_1$. We prove that  the limiting point process is determinantal with kernel $(i,j)\mapsto k_{0}(j-i)$. In a second step, we prove that the kernel depends only on  $\sum_{i \geq 1} x_i$.
\paragraph*{}
Theorem \ref{5} implies that 
for a general random virtual permutation stable under conjugation, we have the following result. 
\begin{corollary} \label{gcase} For any probability measure $\nu$ on $\Sigma$,
\begin{equation} \label{limvir} 
\lim_{n\to \infty} \mathbb{P}(A \subset D\left(\sigma^\nu_n)\right)=\int_\Sigma \det\left(\left[k_{1-\sum_i{x_i}}(j-i)\right]_{i,j \in A}\right)d\nu(x).
\end{equation}
\end{corollary}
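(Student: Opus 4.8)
The plan is to combine the explicit integral representation of the law of $\sigma^\nu_n$ with Theorem~\ref{5}, and then pass to the limit by dominated convergence. The key observation is that $\nu\mapsto\mathbb{P}(A\subset D(\sigma^\nu_n))$ is an affine functional of $\nu$, so it is enough to understand the case $\nu=\delta_x$ for a single $x\in\Sigma$, which is precisely what Theorem~\ref{5} delivers (applied to a deterministic virtual permutation).

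First I would record the linearity. Summing the correspondence $\mathbb{P}(\sigma^\nu_n=\sigma)=\int_\Sigma f(n,x,\sigma)\,d\nu(x)$ over all $\sigma\in\mathfrak{S}_n$ with $A\subset D(\sigma)$ and applying Tonelli (all terms are nonnegative), one obtains
\begin{equation*}
\mathbb{P}(A\subset D(\sigma^\nu_n))=\int_\Sigma\Big(\sum_{\sigma:\,A\subset D(\sigma)}f(n,x,\sigma)\Big)\,d\nu(x)=\int_\Sigma \mathbb{P}\big(A\subset D(\sigma^{\delta_x}_n)\big)\,d\nu(x),
\end{equation*}
where the last equality uses that $f(n,x,\cdot)$ is exactly the distribution of $\sigma^{\delta_x}_n$. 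Measurability of $x\mapsto f(n,x,\sigma)$, and hence of the finite sum over $\sigma$, is clear from the defining formula~\eqref{defdef}, so the integral is well defined.

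Next, fix $x=(x_i)_{i\ge1}\in\Sigma$ and set $x_0:=1-\sum_{i\ge1}x_i\in[0,1]$; then $\delta_x(\Sigma_{1-x_0})=1$, so Theorem~\ref{5} applies to the measure $\delta_x$ and gives
\begin{equation*}
\lim_{n\to\infty}\mathbb{P}\big(A\subset D(\sigma^{\delta_x}_n)\big)=\det\big(\big[k_{1-\sum_i x_i}(j-i)\big]_{i,j\in A}\big).
\end{equation*}
The boundary values $\sum_i x_i\in\{0,1\}$ are included in the admissible range $0\le x_0\le1$ of Theorem~\ref{5}, so the pointwise limit holds for every $x\in\Sigma$. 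Finally, since $x\mapsto \mathbb{P}(A\subset D(\sigma^{\delta_x}_n))$ takes values in $[0,1]$ and $\nu$ is a probability measure, the dominated convergence theorem permits interchanging the limit with the integral over $\Sigma$, which yields exactly \eqref{limvir}.

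I do not expect a serious obstacle: the whole argument is a one-line disintegration against $\nu$ followed by a bounded-convergence passage to the limit, with all the substantive analytic content packaged into Theorem~\ref{5}. The only mild points to verify are the Borel measurability of $x\mapsto f(n,x,\sigma)$ (immediate from~\eqref{defdef}) and the fact that $\delta_x$ satisfies the hypothesis of Theorem~\ref{5} for every $x$, including the degenerate cases $\sum_i x_i=0$ (the identity) and $\sum_i x_i=1$.
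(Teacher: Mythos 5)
Your argument is correct and is essentially identical to the paper's proof: the paper likewise disintegrates $\mathbb{P}(A\subset D(\sigma^\nu_n))$ as $\int_\Sigma \rho(n,x,A)\,d\nu(x)$ via Fubini, invokes the pointwise convergence of $\rho(n,x,A)$ to the limiting determinant (i.e.\ Theorem~\ref{5} for $\nu=\delta_x$), and concludes by dominated convergence. Your extra remarks on measurability and the boundary cases $\sum_i x_i\in\{0,1\}$ are fine but not part of the paper's (terser) write-up.
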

For the total number of descents we have
\begin{proposition} \label{1.1} For any probability measure $\nu$ on $\Sigma$,
 \begin{equation*}
\lim_{n\to \infty} \frac{\mathbb{E}(|D(\sigma^\nu_n)|)}{n}=\frac{1}{2}\left(1-\int_\Sigma \left(1-\sum_i{x_i}\right)^2 d\nu(x)\right).
\end{equation*}
\end{proposition}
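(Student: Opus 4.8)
The plan is to compute $\mathbb{E}(|D(\sigma^\nu_n)|)$ exactly for each $n$ and then pass to the limit using the paintbox description of $\sigma^\nu_n$. Writing $|D(\sigma)|=\sum_{i=1}^{n-1}\mathbbm{1}_{\sigma(i)>\sigma(i+1)}$, the first and main step is to establish the identity, valid for \emph{any} $\sigma_n$ stable under conjugation and any $i\in\{1,\dots,n-1\}$,
\[
\mathbb{P}(\sigma_n(i)>\sigma_n(i+1))=\tfrac12\Bigl(1-\mathbb{P}(\sigma_n(i)=i,\ \sigma_n(i+1)=i+1)+\mathbb{P}(\sigma_n(i)=i+1,\ \sigma_n(i+1)=i)\Bigr),
\]
together with the remark that, by stability under conjugation, the last two probabilities do not depend on $i$ (they are the probability that two prescribed distinct points are both fixed, resp. form a $2$-cycle). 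To prove the identity I would partition the sample space according to the behaviour of $\sigma_n$ at $\{i,i+1\}$: the generic event $E^{c}$ on which $i,i+1$ are not fixed and $\sigma_n(i)\neq i+1$, $\sigma_n(i+1)\neq i$, and a handful of exceptional events ($i$ fixed; $i+1$ fixed; $\sigma_n(i)=i+1$; $\sigma_n(i+1)=i$; the $2$-cycle $(i\ i+1)$). Conjugation by the transposition $(i\ i+1)$ is a law-preserving involution that stabilizes $E^{c}$ setwise and swaps $\sigma_n(i)$ with $\sigma_n(i+1)$, whence $\mathbb{P}(\sigma_n(i)>\sigma_n(i+1)\mid E^{c})=\tfrac12$; the same transposition matches the exceptional events into equiprobable pairs, and conjugation by transpositions $(a\ b)$ with $a,b\notin\{i,i+1\}$ shows that on each exceptional event other than \emph{both fixed} and \emph{$2$-cycle} the single relevant image of $\sigma_n$ is uniform on $\{1,\dots,n\}\setminus\{i,i+1\}$, contributing $\tfrac{i-1}{n-2}$ or $\tfrac{n-i-1}{n-2}$ with matching total weights; the two remaining events contribute $0$ and $1$. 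Summing the contributions, the $i$-dependence cancels and the identity falls out.

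Granting the identity, summing over $i$ gives $\mathbb{E}(|D(\sigma^\nu_n)|)=\tfrac{n-1}{2}(1-p_n+q_n)$ with $p_n:=\mathbb{P}(\sigma^\nu_n(1)=1,\ \sigma^\nu_n(2)=2)$ and $q_n:=\mathbb{P}(\sigma^\nu_n(1)=2,\ \sigma^\nu_n(2)=1)$, so it remains to show $p_n\to\int_\Sigma(1-\sum_i x_i)^2\,d\nu(x)$ and $q_n\to 0$. For this I would use the paintbox construction of the central measure attached to $\nu$ (as in \citep*[Section 3]{TSILEVICH1998StationaryMO}, compatible with \eqref{defdef}): conditionally on $x\sim\nu$, throw i.i.d.\ uniform marks $U_1,\dots,U_n$ on $[0,1)$, read off the blocks induced by the subdivision of $[0,1)$ into intervals of lengths $x_1,x_2,\dots$ and a dust interval of length $x_0=1-\sum_i x_i$, equip every genuine block with an independent uniform cyclic order, and make each dust mark a fixed point. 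Conditionally on $x$, the event defining $p_n$ splits into the contribution where $U_1,U_2$ both land in the dust (probability $x_0^2$) and configurations where $1$ or $2$ is an isolated mark of a non-dust interval; the latter have total probability at most $2\sum_k x_k(1-x_k)^{n-2}+\sum_{k\neq k'}x_kx_{k'}(1-x_k-x_{k'})^{n-2}$, which is bounded by $3$ and tends to $0$ pointwise in $x$, so $p_n\to\int_\Sigma x_0^2\,d\nu$ by dominated convergence. Likewise the event defining $q_n$ forces $U_1,U_2$ to be the only two marks in a common non-dust interval, of conditional probability $\sum_k x_k^2(1-x_k)^{n-2}\to 0$, whence $q_n\to0$. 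Putting the pieces together, $\tfrac1n\mathbb{E}(|D(\sigma^\nu_n)|)=\tfrac{n-1}{2n}(1-p_n+q_n)\to\tfrac12\bigl(1-\int_\Sigma(1-\sum_i x_i)^2\,d\nu\bigr)$.

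The step I expect to be the real obstacle is the first one: keeping the bookkeeping of the exceptional configurations straight and checking that every $i$-dependent term indeed cancels. Everything after that is routine. As a sanity check, the answer is consistent with Theorem~\ref{5}: expanding \eqref{keneldescvirt} shows that the density of the limiting determinantal descent process is $k_{x_0}(0)=\hat a_1(x_0)=\tfrac{1-x_0^{2}}{2}$ by \eqref{eqq7}, so Proposition~\ref{1.1} is the integrated form of that theorem; but the argument sketched above is self-contained and does not rely on it.
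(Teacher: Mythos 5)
Your argument is correct, and it diverges from the paper's proof in the half that actually carries the analytic content. The first step is, up to packaging, the paper's own Lemma \ref{finallemma}: the paper performs exactly the conditioning on the seven configurations of $\sigma_n$ at $\{i,i+1\}$, obtains the conditional probabilities $0,\tfrac{i-1}{n-2},\tfrac{n-i-1}{n-2},\tfrac12,\tfrac{i-1}{n-2},\tfrac{n-i-1}{n-2},1$, and uses conjugation by $(i\ i{+}1)$ to pair the $\tfrac{i-1}{n-2}$ and $\tfrac{n-i-1}{n-2}$ terms so that the $i$-dependence cancels; your closed form $\mathbb{P}(i\in D(\sigma_n))=\tfrac12\bigl(1-\mathbb{P}(\text{both fixed})+\mathbb{P}(\text{2-cycle})\bigr)$ is algebraically identical to the paper's four-term expression (substitute $P_1+P_4=\tfrac12(1-F-P_3-P_6)$ into $P_1+P_4+\tfrac12P_3+P_6$) and is arguably the cleaner way to state it. Where you genuinely depart is the second step: the paper does not compute the two exceptional probabilities at all, but instead feeds Lemma \ref{finallemma} into Corollary \ref{gcase} with $A=\{1\}$, so that the limit $\int_\Sigma\hat a_1\bigl(1-\sum_i x_i\bigr)d\nu(x)=\tfrac12\bigl(1-\int_\Sigma(1-\sum_i x_i)^2d\nu\bigr)$ is read off from the kernel \eqref{keneldescvirt}--\eqref{eqq7}; this makes Proposition \ref{1.1} a two-line corollary of Theorem \ref{5}, at the price of importing that theorem's full determinantal machinery. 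Your route instead evaluates $p_n=\mathbb{P}(\sigma^\nu_n(1)=1,\sigma^\nu_n(2)=2)\to\int_\Sigma(1-\sum_i x_i)^2d\nu$ and $q_n\to0$ directly from the paintbox description (the same combinatorial interpretation the paper sets up at the start of Subsection \ref{appli}), with correct domination ($x_0^2+3$ and $\sum_k x_k^2$ respectively) for the exchange of limit and integral. The payoff is a self-contained, elementary proof that is logically independent of Theorem \ref{5}, and your closing observation that $\hat a_1(x_0)=\tfrac{1-x_0^2}{2}$ turns the paper's derivation into a consistency check of yours rather than a prerequisite.
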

We will prove Corollary \ref{gcase} and Proposition \ref{1.1} in Subsection \ref{descpr}.
\end{itemize}
\section*{Acknowledgements}
\paragraph*{}
The author would like to acknowledge many extremely useful conversations
with   Adrien Hardy and Mylène Maïda, their supervision of this work and  their great help to elaborate  and to ameliorate the coherence of this paper. This work is partially supported by Labex CEMPI (ANR-11-LABX-0007-01).
\section{Further discussion}
\label{diss} 
\paragraph*{} In  previous subsections, except for Corollary \ref{2.1}, the applications are for virtual permutations, but with the same logic, we can prove a similar result  as Corollary \ref{2.6} for some  permutations non compatible with projections.
\begin{proposition} \label{them11}
Let $(\mathbb{P}_n)_{n\geq1}$ be a sequence of probability measures  stable under conjugation. Assume that there exists a positive integer $k$ such that  for all real numbers $s_1,s_2,\dots,s_{k}$,
\begin{equation} \label{convrh} \tag{H5}
\lim_{n\to \infty} \mathbb{P}_n\left(\left\{\sigma \in \mathfrak{S}_n,\, \forall  1 \leq i \leq k,\,  \frac{\lambda'_i(\sigma)-2\sqrt{n}}{n^\frac 16}\leq s_i\right\}\right)=F_{2,k}(s_1,\dots,s_{k}).
\end{equation}
Let $0\leq x_0< 1$ and $(\sigma_n)_{n\geq 1}$ be a sequence of random permutations such that for all positive integer $n$, for all $\sigma \in \mathfrak{S}_n$,
\begin{align}\label{def2.1}
\mathbb{P}(\sigma_n=\sigma):=\sum_{j=0}^{l} \binom{l}{j} x_0^j(1-x_0)^{n-j}  \mathbb{P}_{n-j}(\sigma^j),
\end{align}
where $l$ is the  number of fixed points of $\sigma$ and $\sigma^j$ is the permutation obtained by removing $j$ fixed points of $\sigma$.
Then  for all real numbers $s_1,s_2,\dots,s_{k}$,
\begin{equation*} 
\lim_{n\to \infty} \mathbb{P}\left(\forall  1 \leq i \leq k,\, \frac{\lambda'_i(\sigma_n)-2\sqrt{(1-x_0)n}}{((1-x_0)n)^\frac 16}\leq s_i\right)=F_{2,k}(s_1,\dots,s_{k}).
\end{equation*}
\end{proposition}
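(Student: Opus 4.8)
The plan is to realise $\sigma_n$ as a permutation distributed according to $\mathbb{P}_{n-J}$ into which $J$ fixed points have been inserted at random positions, with $J$ binomial, and then to observe that such an insertion changes each $\lambda'_i$ by at most a bounded amount. The statement is then reduced to hypothesis \eqref{convrh} together with the concentration of $J$ and the edge-rescaling argument already used for Theorem~\ref{the1}.

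\emph{The coupling.} Let $B_1,\dots,B_n$ be i.i.d.\ Bernoulli variables of parameter $x_0$, set $S:=\{p\le n:B_p=1\}$ and $J:=|S|\sim\mathrm{Bin}(n,x_0)$. Conditionally on $S$, let $\tau$ be a random permutation of $\{1,\dots,n\}\setminus S$ whose image under the unique increasing bijection $\{1,\dots,n\}\setminus S\to\{1,\dots,n-J\}$ has law $\mathbb{P}_{n-J}$, and define $\sigma_n$ by $\sigma_n(p)=p$ for $p\in S$ and $\sigma_n(p)=\tau(p)$ for $p\notin S$. Since every $\mathbb{P}_m$ is stable under conjugation, summing over the $\binom{l}{j}$ ways of designating which $j$ of the $l$ fixed points of a given $\sigma$ are the inserted ones shows that $\sigma_n$ has precisely the law \eqref{def2.1}. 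In this coupling, conditionally on $\{J=j\}$, the non-inserted part of $\sigma_n$ is order-isomorphic to a $\mathbb{P}_{n-j}$-permutation, while the $j$ inserted points are fixed points of $\sigma_n$.

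\emph{The combinatorial core.} The inserted fixed points occupy positions $q_1<\dots<q_J$ and take the values $q_1<\dots<q_J$, hence they form an increasing subsequence of $\sigma_n$; consequently any decreasing subsequence of $\sigma_n$ contains at most one of them, and a union of $i$ pairwise disjoint decreasing subsequences of $\sigma_n$ contains at most $i$ of them. Deleting those points turns such a union into a union of $i$ pairwise disjoint decreasing subsequences of the order-isomorphic copy of $\tau$, while conversely any such family for $\tau$ is one for $\sigma_n$. By Greene's theorem, which identifies $\lambda'_1(\pi)+\dots+\lambda'_i(\pi)$ with the maximal total size of $i$ pairwise disjoint decreasing subsequences of $\pi$, we get, for every $i$,
\[
\sum_{a\le i}\lambda'_a(\tau)\ \le\ \sum_{a\le i}\lambda'_a(\sigma_n)\ \le\ \sum_{a\le i}\lambda'_a(\tau)+i ,
\]
and therefore $|\lambda'_i(\sigma_n)-\lambda'_i(\tau)|\le i\le k$ for all $i\le k$. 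This robustness is special to the decreasing direction: inserting $\asymp x_0 n$ fixed points increases $\lambda_1$ by $\asymp x_0 n$, which is exactly why the statement concerns $\lambda'$ (and why one does not expect Tracy--Widom fluctuations for $\ell$ here).

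\emph{Conclusion.} Fix $\omega_n\to\infty$ with $\omega_n=o(n^{1/6})$ and call $j$ good if $|j-x_0n|\le\omega_n\sqrt n$; by Chebyshev's inequality $\mathbb{P}(J\text{ good})\to1$, and for good $j$ one has $n-j\to\infty$ with, uniformly, $2\sqrt{(1-x_0)n}=2\sqrt{n-j}+O(\omega_n)$ and $((1-x_0)n)^{1/6}=(1+o(1))(n-j)^{1/6}$. Conditioning on $\{J=j\}$ and using $\lambda'_i(\tau)\le\lambda'_i(\sigma_n)\le\lambda'_i(\tau)+k$, the probability $\mathbb{P}\big(\forall i\le k,\ \lambda'_i(\sigma_n)\le 2\sqrt{(1-x_0)n}+s_i((1-x_0)n)^{1/6}\ \big|\ J=j\big)$ lies between two quantities of the form $\mathbb{P}_{n-j}\big(\forall i\le k,\ \lambda'_i\le 2\sqrt{n-j}+(s_i+\varepsilon_n)(n-j)^{1/6}\big)$ with $\varepsilon_n\to0$; by \eqref{convrh} and the continuity of $F_{2,k}$ both of these tend to $F_{2,k}(s_1,\dots,s_k)$, uniformly over good $j$, by the same sandwiching as in the proof of Theorem~\ref{the1}. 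Averaging over $j$ and discarding the bad values, whose total probability tends to $0$, gives the asserted convergence. The step I expect to require the most care is the combinatorial core — converting the simple remark that the inserted fixed points form a single increasing chain into the uniform estimate $|\lambda'_i(\sigma_n)-\lambda'_i(\tau)|\le k$ through Greene's theorem; the coupling is routine bookkeeping enabled by conjugation-invariance, and the final step is the paper's standard edge argument.
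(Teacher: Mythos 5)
Your proposal is correct and follows essentially the same route as the paper: the same fixed-point-insertion coupling justified by conjugation-invariance, the same key observation (via Greene's theorem, Lemma \ref{RSKLEMMA}) that the inserted fixed points form an increasing chain so each partial sum $\sum_{a\leq i}\lambda'_a$ shifts by at most $O(k)$, and the same binomial-concentration plus recentering argument to transfer \eqref{convrh} from $n-j$ to $(1-x_0)n$. The only differences are cosmetic (a window $\omega_n\sqrt n$ with $\omega_n\to\infty$ instead of a fixed $\alpha\sqrt n$ with an $\varepsilon$ loss, and a slightly sharper constant in the shift), so no further comment is needed.
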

We prove this result in Subsection \ref{appli}. 
  An interpretation of the random permutation defined by equation \eqref{def2.1} is the following. Let $n$ be a positive integer. We construct  a subset $A$ of $\{1,2,\dots,n\}$ as follows: for every $1\leq i \leq n$, with probability $x_0$, $i \in A$  independently from other points. The points of $A$ are then fixed points of $\sigma_n$. After that,  we permute the elements of    $\{1,2,\dots,n\}\setminus A$ according to the probability distribution $\mathbb{P}_{n-|A|}$. In particular, $A$ is a subset of all   fixed points of $\sigma_n$.
\paragraph*{} As a consequence, recalling \eqref{defdef}, if there exists $0<x_0<1$ such that $\nu\left(\Sigma_{1-x_0}\right)=1$, then  the number of fixed points of $\sigma^\nu_n$ is larger than a binomial random variable with parameters $x_0$ and $n$. Consequently, 
\begin{align*}
\mathbb{E}(\ell(\sigma^\nu_n)) \geq n x_0. 
\end{align*}
In this case, we conjecture that the fluctuations are Gaussian. 
\begin{conjecture}
Let $0<x_0<1$, $\nu$ be a probability measure on $\Sigma$ satisfying $\nu(\Sigma_{1-x_0})=1$  and $\hat\nu$ be the   \mbox{1-measure} satisfying $d\hat\nu(x)=d\nu(\frac{x}{1-x_0})$.  If 
\begin{equation*} 
\lim_{n\to \infty} \mathbb{P}\left(\sigma^{\hat\nu}_n(1)=1\right)=0,
\end{equation*}
then $\forall s \in \mathbb{R}$,
\begin{equation*} 
\lim_{n\to \infty} \mathbb{P}\left(\frac{\ell(\sigma^\nu_n)-x_0n}{\sqrt{x_0(1-x_0)n}}\leq s\right)=\int_{-\infty}^s\frac{1}{\sqrt{2\pi}} e^{\frac{-x^2}{2}}dx.
\end{equation*}
\end{conjecture}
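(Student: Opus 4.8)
The plan is to combine the structural description of $\sigma^\nu_n$ given by Proposition \ref{them11} with a central limit theorem for its fixed points. Since $\nu(\Sigma_{1-x_0})=1$, the permutation $\sigma^\nu_n$ is obtained by first declaring each of the $n$ points to be a fixed point independently with probability $x_0$, and then permuting the remaining points according to the $1$-measure $\hat\nu$. Let $A$ denote the (random) set of \emph{all} fixed points of $\sigma^\nu_n$ and put $k:=|A|$. Because any two fixed points $i<j$ satisfy $\sigma^\nu_n(i)=i<j=\sigma^\nu_n(j)$, the set $A$ is an increasing subsequence, so deterministically $\ell(\sigma^\nu_n)\geq k$. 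The whole statement will follow once we establish the matching bound
\begin{equation*}
\ell(\sigma^\nu_n)=k+o_{\mathbb{P}}(\sqrt{n}),
\end{equation*}
together with the fact that $\frac{k-x_0n}{\sqrt{x_0(1-x_0)n}}$ converges to $\mathcal{N}(0,1)$; indeed we would then have $\frac{\ell(\sigma^\nu_n)-x_0n}{\sqrt{x_0(1-x_0)n}}=\frac{k-x_0n}{\sqrt{x_0(1-x_0)n}}+o_{\mathbb{P}}(1)$.

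For the Gaussian behaviour of $k$, note that the planted fixed points number exactly a $\mathrm{Binomial}(n,x_0)$ variable $k_0$, for which the classical central limit theorem applies. The remaining fixed points $k-k_0$ are those created by the $\hat\nu$-part; by conjugation invariance their conditional expectation given $k_0=m$ equals $(n-m)\,\mathbb{P}(\sigma^{\hat\nu}_{n-m}(1)=1)$, which under $\mathbb{P}(\sigma^{\hat\nu}_n(1)=1)\to0$ is $o(n)$. First I would show that in fact $k-k_0=o_{\mathbb{P}}(\sqrt{n})$, so that $k$ inherits the fluctuations of $k_0$. Here I expect that the hypothesis $\mathbb{P}(\sigma^{\hat\nu}_n(1)=1)\to 0$ alone is not quite enough at the scale $\sqrt{n}$ and that a quantitative strengthening (of the order $o(n^{-1/2})$) is needed; for the natural examples, such as the Poisson--Dirichlet/Ewens measures where the $\hat\nu$-part has only $O(\log n)$ fixed points, this step is immediate. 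At this stage the deterministic bound $\ell(\sigma^\nu_n)\geq k$ already yields $\limsup_n\mathbb{P}\bigl(\frac{\ell(\sigma^\nu_n)-x_0n}{\sqrt{x_0(1-x_0)n}}\leq s\bigr)\leq\int_{-\infty}^s\frac{1}{\sqrt{2\pi}}e^{-x^2/2}dx$, so only the reverse inequality, i.e.\ the upper bound on $\ell$, remains.

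The mechanism behind the upper bound is a blocking phenomenon. View an increasing subsequence as a chain of points $(i,\sigma^\nu_n(i))$ for the product order on $\mathbb{R}^2$, and split it into its diagonal (fixed) part and its off-diagonal (non-fixed) part. A direct check shows that a fixed point $(a,a)$ is comparable to a non-fixed point $(i,\sigma^\nu_n(i))$ only when $a$ lies outside the open interval $I(i):=(\min(i,\sigma^\nu_n(i)),\max(i,\sigma^\nu_n(i)))$; hence every non-fixed point used in the chain forbids all fixed points lying in $I(i)$, an interval of length $d(i):=|\sigma^\nu_n(i)-i|$. Therefore, if the chain uses non-fixed points at positions $i_1<\dots<i_r$,
\begin{equation*}
\ell(\sigma^\nu_n)\leq k-\Bigl|A\cap\bigcup_{t=1}^r I(i_t)\Bigr|+r.
\end{equation*}
As $A$ has asymptotic density $x_0$, one has $\bigl|A\cap\bigcup_t I(i_t)\bigr|\approx x_0\,\bigl|\bigcup_t I(i_t)\bigr|$, so a non-fixed point of displacement $d$ contributes a net amount $\approx 1-x_0 d$ to the excess $\ell-k$. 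Only points within distance $1/x_0$ of the diagonal are profitable, and --- after conjugating by a uniform permutation, which leaves the law of $\ell$ unchanged and makes the displacement statistics tractable --- the expected number of such near-diagonal non-fixed points is $O(1/x_0)=O(1)$. This is the heuristic forcing $\ell-k=O_{\mathbb{P}}(1)$.

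The hard part is that the intervals $I(i_t)$ may overlap heavily, so that $\bigl|\bigcup_t I(i_t)\bigr|$ can be much smaller than $\sum_t d(i_t)$: an adversarial chain might try to pack a long increasing subsequence of non-fixed points into a short diagonal window, thereby paying for the blocked fixed points only once. Controlling this requires a uniform (over all chains) estimate on the last-passage-type functional $\max_C\bigl(r-x_0\bigl|\bigcup_t I(i_t)\bigr|\bigr)$, showing it to be $o_{\mathbb{P}}(\sqrt{n})$. The quantitative input I would use is the following trade-off: all non-fixed chain points have both coordinates in $U:=\bigcup_t I(i_t)$, a set of size $\Lambda$ which for the uniformly relabelled configuration contains only about $(1-x_0)\Lambda^2/n$ non-fixed points and hence supports an increasing subsequence of length at most $\approx 2\Lambda\sqrt{(1-x_0)/n}$; thus the excess is at most $\Lambda\bigl(2\sqrt{(1-x_0)/n}-x_0\bigr)$, which is negative as soon as $n$ is large, except in the low-coverage regime where only the $O(1)$ profitable near-diagonal points survive. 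Making this dichotomy rigorous uniformly over the exponentially many candidate windows $U$ --- presumably via a chaining or union-bound argument together with the conjugation-invariant displacement statistics --- is the principal obstacle, and is the reason the statement is only conjectured.
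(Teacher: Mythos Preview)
The statement you are attempting to prove is labelled in the paper as a \emph{conjecture}, not a theorem; the paper gives no proof. Immediately after stating it, the paper only remarks that ``one bound is simple to prove by the remark above,'' namely that the number of fixed points of $\sigma^\nu_n$ dominates a $\mathrm{Binomial}(n,x_0)$ variable, whence $\ell(\sigma^\nu_n)\geq k_0$ and the classical CLT gives
\[
\limsup_{n\to\infty}\mathbb{P}\!\left(\frac{\ell(\sigma^\nu_n)-x_0 n}{\sqrt{x_0(1-x_0)n}}\leq s\right)\leq \int_{-\infty}^s\frac{e^{-x^2/2}}{\sqrt{2\pi}}\,dx.
\]
Your treatment of this direction is consistent with the paper's remark, though you take a slight detour: you compare $\ell$ to the total number $k$ of fixed points and then argue $k-k_0=o_{\mathbb P}(\sqrt n)$, which (as you yourself note) is not guaranteed by the stated hypothesis. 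The paper's version bypasses this entirely by using $\ell\geq k_0$ directly, so no control on $k-k_0$ is needed for the easy bound.

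For the reverse inequality there is nothing in the paper to compare against: it is left open. Your blocking heuristic and the last-passage trade-off are plausible intuitions, but, as you correctly flag, the uniform control over all chains (the union bound over windows $U$) is the genuine obstruction, and your sketch does not close it. In particular, the step where you replace $|A\cap\bigcup_t I(i_t)|$ by $x_0|\bigcup_t I(i_t)|$ is only an expectation statement; turning it into a bound valid simultaneously for the optimising chain is exactly where a proof would have to do real work. So your proposal should be read as a heuristic outline of why the conjecture is believable, not as a proof, and on that reading it is in line with the paper's own assessment.
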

One bound is simple to prove by the remark above.

\paragraph*{}
 A possible generalization of the Ewens distributions is the following. 
\begin{definition}\label{gewens}
Let  $\hat{\theta}=(\hat{\theta}_{i})_{i\geq 1}$ be  a sequence of positive real numbers, we say that $\sigma_n$ follows  the generalized Ewens distribution on $\mathfrak{S}_n$  with parameter ${\hat{\theta}}$ if for all $\sigma \in \mathfrak{S}_n$,
\begin{align*}
{\mathbb{P}}(\sigma_n=\sigma)= \frac{\prod_{i\geq 1}\hat{\theta}_i^{r_i(\sigma)}}{\sum_{\sigma\in \mathfrak{S}_n}\prod_{i\geq 1}\hat{\theta}_i^{r_i(\sigma)}}.
\end{align*}
Here, $r_i(\sigma)$ is the number of cycles of $\sigma$ of length $i$.
\end{definition}
This generalization was studied in some cases in details by \cite*{2011arXiv1102.4796E}. In the general case, it is not obvious to have a good control on the number of cycles. Nevertheless, by using some results of Ercolani and Ueltschi, we can conclude in some cases.
\begin{corollary} \label{2.2} 
Let $(\sigma_n)_{n\geq 1}$ be a sequence of random permutations such that for all positive integer $n$, $\sigma_n$ follows the generalized Ewens distribution  with parameter $\hat{\theta}=(\hat{\theta}_{i})_{i\geq 1}$.
Assume that $\hat{\theta}$ satisfies one of the following hypotheses: 
\begin{itemize}
\item $\hat\theta_i=e^{i^\gamma},\gamma>1$,
\item$ \lim_{i\to \infty} \sum_{k=1}^{i-1}\frac{\hat\theta_{k}\hat\theta_{i-k}}{\hat\theta_i}=0$,
\item$ \lim_{i\to \infty} \hat\theta_i=\theta$,
\item $ \lim_{i\to \infty} \frac{\hat\theta_i}{i^\gamma}=1,$ where $ 0\leq \gamma<\frac{1}{7}$, 
\item $\hat\theta_i=i^\gamma$, $ \gamma<-1$.
\end{itemize}
Then we have Tracy-Widom fluctuations \eqref{TW} and the convergence at the edge to the Airy ensemble \eqref{TW2}.
\end{corollary}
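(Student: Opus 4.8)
The plan is to reduce the statement to Theorem~\ref{Airyens}. The generalized Ewens distribution assigns to $\sigma$ a mass proportional to $\prod_{i\geq1}\hat\theta_i^{r_i(\sigma)}$, which depends on $\sigma$ only through its cycle type, hence is constant on conjugacy classes; so \eqref{h1} holds automatically. Since $\#(\sigma_n)\le n$ almost surely, Markov's inequality reduces the verification of \eqref{h2} to the single estimate
\begin{equation*}
\mathbb{E}(\#(\sigma_n))\;=\;\sum_{i=1}^{n}\mathbb{E}(r_i(\sigma_n))\;=\;o\!\left(n^{1/6}\right)
\end{equation*}
under each of the five hypotheses on $\hat\theta$. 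Once \eqref{h2} is established, \eqref{TW2} follows from Theorem~\ref{Airyens}, and \eqref{TW} is the case $k=1$.

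The tool for the estimate on $\mathbb{E}(\#(\sigma_n))$ is the asymptotic analysis of Ercolani and Ueltschi \citep{2011arXiv1102.4796E}, which classifies cycle-weight ensembles by the singular behaviour near $x=1$ of $g(x):=\sum_{i\geq1}\frac{\hat\theta_i}{i}x^i$ and, in each class, pins down the order of the expected cycle counts (one has $\mathbb{E}(r_i(\sigma_n))=\frac{\hat\theta_i}{i}\frac{n!}{(n-i)!}\frac{h_{n-i}}{h_n}\approx\frac{\hat\theta_i}{i}x_n^{i}$ for a suitable saddle point $x_n\uparrow1$, with the genuine work lying in the range where $i$ is comparable to $n$ and a macroscopic cycle may occur). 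I would treat the five cases in three groups. In the ``one macroscopic cycle'' regime --- $\hat\theta_i=e^{i^\gamma}$ with $\gamma>1$, and more generally whenever $\sum_{k=1}^{i-1}\hat\theta_k\hat\theta_{i-k}/\hat\theta_i\to0$ --- a typical permutation is a single cycle of length $n-O(1)$ together with finitely many short cycles, so $\mathbb{E}(\#(\sigma_n))=O(1)$. In the convergent regime $\hat\theta_i=i^\gamma$ with $\gamma<-1$ one has $g(1)=\sum_{i}i^{\gamma-1}<\infty$, the short cycles form an asymptotically Poissonian array of finite total mean $g(1)$, and the Ercolani--Ueltschi analysis again gives $\mathbb{E}(\#(\sigma_n))=O(1)$. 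In both groups $\mathbb{E}(\#(\sigma_n))=O(1)=o(n^{1/6})$.

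The remaining two hypotheses are logarithmic and polynomial. If $\hat\theta_i\to\theta$ the ensemble is a bounded perturbation of the Ewens$(\theta)$ measure: $g(x)\sim\theta\log\frac{1}{1-x}$ as $x\uparrow1$, the saddle point obeys $1-x_n\asymp 1/n$, and the estimate that gives $\mathbb{E}(\#)\sim\theta\log n$ for Ewens carries over, so $\mathbb{E}(\#(\sigma_n))=O(\log n)=o(n^{1/6})$. If $\hat\theta_i/i^\gamma\to1$ with $0\leq\gamma<\tfrac17$, then $g$ has an algebraic singularity $g(x)\sim\Gamma(\gamma)(1-x)^{-\gamma}$, the saddle point obeys $1-x_n\asymp n^{-1/(\gamma+1)}$, and summing the cycle counts gives
\begin{equation*}
\mathbb{E}(\#(\sigma_n))\;\asymp\;g(x_n)\;\asymp\;(1-x_n)^{-\gamma}\;\asymp\;n^{\gamma/(\gamma+1)}
\end{equation*}
(the value $\gamma=0$ degenerating to the $\log n$ behaviour just described). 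Since $\tfrac{\gamma}{\gamma+1}<\tfrac16$ whenever $\gamma<\tfrac15$, the hypothesis $\gamma<\tfrac17$ is more than enough, with room to spare for the lower-order corrections. In every case \eqref{h2} holds, and Theorem~\ref{Airyens} finishes the argument.

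The main obstacle is bookkeeping rather than conceptual. One must match the soft statements above to the precise theorems of Ercolani and Ueltschi: check that their results apply to the \emph{perturbed} weights $\hat\theta_i\sim i^\gamma$ and $\hat\theta_i\to\theta$ (not only to the exact profiles), identify exactly which of their estimates yields $\mathbb{E}(\#(\sigma_n))=O(1)$ under the summation condition $\sum_k\hat\theta_k\hat\theta_{i-k}/\hat\theta_i\to0$, and confirm that the error terms attached to each asymptotic are themselves $o(n^{1/6})$ (which is why the polynomial case is stated with the conservative threshold $\tfrac17$ rather than $\tfrac15$).
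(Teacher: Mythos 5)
Your proposal is correct and follows essentially the same route as the paper: observe that the generalized Ewens weight depends only on the cycle type so \eqref{h1} is automatic, then invoke the Ercolani--Ueltschi asymptotics for $\#(\sigma_n)$ in each of the five regimes to verify \eqref{h2}, and conclude by Theorem~\ref{Airyens}. The only cosmetic difference is that in the ``macroscopic cycle'' and convergent regimes the cited results give convergence of $\#(\sigma_n)$ in probability or in distribution rather than a bound on $\mathbb{E}(\#(\sigma_n))$, so there one deduces \eqref{h2} directly from tightness instead of via Markov's inequality.
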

For the descent process, we have the convergence for a larger class of parameters.
\begin{corollary} \label{4.2}Let $(\sigma_n)_{n\geq 1}$ be a sequence of random permutations such that for all positive integer $n$, $\sigma_n$ follows the generalized Ewens distribution  with parameter $\hat{\theta}=(\hat{\theta}_{i})_{i\geq 1}$.
Assume that $\hat{\theta}$ meets one of the hypotheses of the previous corollary or 
$ \lim_{i\to \infty} \frac{\hat\theta_i}{i^\gamma}=1 ,$ where $ \gamma\geq 0$. 
We have then the convergence of $D(\sigma_n)$ to the determinantal point process with kernel $K_0$ \eqref{main}.
\end{corollary}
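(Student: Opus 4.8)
The plan is to reduce the statement to Theorem \ref{thm2} and Corollary \ref{2.2}. Recall that the generalized Ewens distribution is manifestly invariant under conjugation, since $\mathbb{P}(\sigma_n = \sigma)$ depends only on the cycle type $(r_i(\sigma))_{i \geq 1}$; this is the hypothesis \eqref{h1}. So the only thing to verify in order to invoke Theorem \ref{thm2} is the condition \eqref{h3}, namely $\lim_{n\to\infty}\mathbb{P}(\sigma_n(1)=1) = 0$. By conjugation invariance, $\mathbb{P}(\sigma_n(1)=1) = \frac{1}{n}\,\mathbb{E}(r_1(\sigma_n))$, so it is equivalent to prove that $\mathbb{E}(r_1(\sigma_n)) = o(n)$, i.e. that the expected number of fixed points is sublinear. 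In all the cases already covered by Corollary \ref{2.2} this is immediate: there the much stronger control \eqref{h2} on $\#(\sigma_n)$ holds, which gives $\mathbb{E}(r_1(\sigma_n)) \leq \mathbb{E}(\#(\sigma_n)) = o(n^{1/6}) = o(n)$, and \eqref{h3} follows from Markov's inequality. Hence for those parameters the conclusion is a direct application of Theorem \ref{thm2}, and nothing new is needed.

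So the only genuinely new case is $\hat\theta_i / i^\gamma \to 1$ with $\gamma \geq 0$ — in particular $\gamma \geq 1/7$, where Corollary \ref{2.2} no longer applies. Here I would appeal to the asymptotic analysis of Ercolani and Ueltschi \citep*{2011arXiv1102.4796E} for the expected cycle counts of generalized Ewens (Betz–Ueltschi–Vershik type) measures. For weights of polynomial growth $\hat\theta_i \sim i^\gamma$ with $\gamma \geq 0$, their results give the order of magnitude of $\mathbb{E}(r_1(\sigma_n))$: the typical number of cycles of a fixed finite length is of order $n^{1/(1+\gamma)}$ (up to constants/logarithms), and more importantly $\mathbb{E}(\#(\sigma_n))$ itself is of order $n^{1/(1+\gamma)} \ll n$ when $\gamma > 0$. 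Since $r_1(\sigma_n) \leq \#(\sigma_n)$ always, this yields $\mathbb{E}(r_1(\sigma_n)) = o(n)$ for every $\gamma \geq 0$ (for $\gamma = 0$ one is back in the bounded-weight regime $\hat\theta_i \to 1$ already handled). Therefore $\mathbb{P}(\sigma_n(1)=1) = \mathbb{E}(r_1(\sigma_n))/n = o(1)$, \eqref{h3} holds, and Theorem \ref{thm2} gives the convergence of $D(\sigma_n)$ to the determinantal point process with kernel $K_0$.

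To organize the write-up: first state that \eqref{h1} is automatic; then dispatch the cases inherited from Corollary \ref{2.2} by the one-line bound $\mathbb{E}(r_1) \leq \mathbb{E}(\#(\sigma_n))$; then treat the remaining family $\hat\theta_i \sim i^\gamma$, $\gamma \geq 0$, by quoting the relevant estimate on $\mathbb{E}(\#(\sigma_n))$ (or directly on $\mathbb{E}(r_1(\sigma_n))$) from \citep*{2011arXiv1102.4796E}; and finally invoke Theorem \ref{thm2}. The main obstacle is purely bibliographic: one must locate in Ercolani–Ueltschi the precise statement bounding the expected number of fixed points (or total cycles) for polynomially growing weights and check that it indeed gives $o(n)$ uniformly over the stated range of $\gamma$; no new probabilistic argument beyond the trivial inequality $r_1 \leq \#$ and Markov's inequality is required, which is exactly why the descent process converges for this wider class of parameters than the longest-increasing-subsequence statistic.
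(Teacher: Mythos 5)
Your proposal is correct and follows essentially the same route as the paper: conjugation invariance gives \eqref{h1} together with the identity $\mathbb{P}(\sigma_n(1)=1)=\frac{1}{n}\sum_{i=1}^n\mathbb{P}(\sigma_n(i)=i)\le \mathbb{E}\left(\frac{\#(\sigma_n)}{n}\right)$, after which the Ercolani--Ueltschi asymptotics for $\mathbb{E}(\#(\sigma_n))$ yield \eqref{h3} and Theorem \ref{thm2} concludes. The only slip is cosmetic: for $\hat\theta_i\sim i^\gamma$ with $\gamma>0$ the cited estimate gives $\mathbb{E}(\#(\sigma_n))$ of order $n^{\gamma/(\gamma+1)}$ rather than $n^{1/(1+\gamma)}$, but since $\gamma/(\gamma+1)<1$ the required bound $o(n)$ is unaffected.
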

Corollaries \ref{2.2} and \ref{4.2} are a direct application from the computations of Ercolani and Ueltschi. In particular, we use the following results: 
 \begin{lemma}
Let $\hat{\theta}=\{\hat{\theta_i}\}_{i\geq 1}$ and $\{\sigma_n\}_{n\geq1}$  be a sequence of random permutations following the generalized Ewens distribution with parameter $\hat\theta$.
\begin{itemize}
\item If $\hat\theta_i=e^{i^\gamma}$ with $\gamma>1$, then $\#(\sigma_n)\overset{\mathbb{P}}\to 1$ \citep*[Theorem 3.1]{2011arXiv1102.4796E}.
\item If $\hat{\theta_i}\to\theta$,  then $\frac{1}{\theta \log(n)}\mathbb{E}(\#(\sigma_n))\to 1$ \citep*[Theorem 6.1]{2011arXiv1102.4796E}.
\item If $\hat{\theta_i}=i^{-\gamma}$   with $\gamma>1$, then $\#(\sigma_n) \overset{d}\to 1+\sum_{i}Poisson\{\theta_i\}$ \citep*[Theorem 7.1]{2011arXiv1102.4796E}.
\item If $ \sum_{k=1}^{n-1} \frac{\hat\theta_k\hat\theta_{n-k}}{\hat\theta_n}\to 0$,   then $\#(\sigma_n)\overset{\mathbb{P}}\to 1$ \citep*[Theorem 3.1]{2011arXiv1102.4796E}.
\item If $\frac{\hat{\theta_i}}{i^{\gamma}}\to 1$ with $\gamma>0$, then
$\lim_{n\to\infty} n^\frac{-\gamma}{\gamma+1}\mathbb{E}(\#(\sigma_n))= \left(\frac{\Gamma(\gamma)}{\gamma^\gamma}\right)^\frac{1}{\gamma+1}
$ \citep*[Theorem 5.1]{2011arXiv1102.4796E}.
\end{itemize}
 \end{lemma}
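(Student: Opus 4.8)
The five assertions are, as the citations indicate, taken from \cite{2011arXiv1102.4796E}; the task is to identify which of their theorems yields each one and to rewrite it in the present notation. The plan is to first record the combinatorial identity that underlies all of their analysis. Writing $\Theta(t):=\sum_{j\geq 1}\hat\theta_j\,t^j/j$ and $h_n:=\sum_{\sigma\in\mathfrak{S}_n}\prod_{i\geq 1}\hat\theta_i^{r_i(\sigma)}$ for the partition function, the exponential formula for cycle-weighted permutations (a cycle of length $j$ carrying weight $\hat\theta_j\,(j-1)!$) gives
\begin{equation*}
\sum_{n\geq 0}h_n\frac{t^n}{n!}=\exp\bigl(\Theta(t)\bigr),\qquad
\sum_{n\geq 0}\mathbb{E}\!\left(u^{\#(\sigma_n)}\right)h_n\frac{t^n}{n!}=\exp\bigl(u\,\Theta(t)\bigr).
\end{equation*}
Differentiating the second identity in $u$ at $u=1$ and extracting coefficients yields $\mathbb{E}(\#(\sigma_n))=h_n^{-1}\sum_{j=1}^n\binom{n}{j}(j-1)!\,\hat\theta_j\,h_{n-j}$, and iterating gives analogous (heavier) expressions for the factorial moments; thus every statement in the lemma is reduced to the asymptotics of the ratios $h_{n-j}/h_n$, equivalently to the analytic behaviour of $\exp(\Theta(t))$.

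Then I would treat the cases according to the nature of $\Theta$. When $\Theta$ has a finite radius of convergence with a tractable singularity, one applies singularity analysis (Flajolet--Odlyzko transfer theorems) exactly as in \cite{2011arXiv1102.4796E}: this covers $\hat\theta_i\to\theta$ (logarithmic class, giving $\mathbb{E}(\#(\sigma_n))\sim\theta\log n$, their Theorem~6.1), $\hat\theta_i=i^{-\gamma}$ with $\gamma>1$ (where the numbers of short cycles become asymptotically independent Poisson variables, so $\#(\sigma_n)\overset{d}\to 1+\sum_i\mathrm{Poisson}(\theta_i)$ by the method of moments, their Theorem~7.1), and $\hat\theta_i/i^\gamma\to 1$ with $\gamma>0$ (power-law class, $\mathbb{E}(\#(\sigma_n))\sim(\Gamma(\gamma)/\gamma^\gamma)^{1/(\gamma+1)}\,n^{\gamma/(\gamma+1)}$, their Theorem~5.1). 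When instead $\Theta$ is entire and grows fast, as under $\hat\theta_i=e^{i^\gamma}$ with $\gamma>1$ or under $\sum_{k=1}^{n-1}\hat\theta_k\hat\theta_{n-k}/\hat\theta_n\to 0$, a saddle-point estimate shows the weight concentrates on permutations that are a single $n$-cycle, so $\#(\sigma_n)\overset{\mathbb{P}}\to 1$ (their Theorem~3.1). In each case it then remains only to read off the normalising constant from the cited statement.

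The one genuinely non-mechanical point is hypothesis matching: Ercolani and Ueltschi parametrise their families through the singularity structure of $\Theta$ rather than through the pointwise behaviour of the $\hat\theta_i$, so for each bullet one must verify that the condition stated here implies theirs. I expect this — in particular for $\hat\theta_i/i^\gamma\to 1$, where one needs an Abelian/Tauberian passage from the coefficients $\hat\theta_i$ to the behaviour of $\Theta(t)$ near its dominant singularity, and for $\sum_{k}\hat\theta_k\hat\theta_{n-k}/\hat\theta_n\to 0$, which must be recast as the relevant subexponential/entire-growth condition — to be the only mildly delicate step. The asymptotic conclusions themselves are then immediate from \cite{2011arXiv1102.4796E}, and since these translations are routine we simply quote the relevant theorems in the paper.
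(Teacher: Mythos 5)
The paper gives no proof of this lemma at all---it is stated purely as a quotation of Theorems 3.1, 5.1, 6.1 and 7.1 of Ercolani and Ueltschi---so your approach of identifying the relevant theorem for each bullet and citing it is exactly the paper's. Your additional sketch of the generating-function machinery is correct in outline (one small quibble: for $\hat\theta_i=e^{i^\gamma}$ with $\gamma>1$ the series $\Theta$ has radius of convergence zero rather than being entire, and Theorem~3.1 rests on the combinatorial dominance condition $\sum_k\hat\theta_k\hat\theta_{n-k}/\hat\theta_n\to0$ rather than a saddle-point argument), but none of that detail is needed beyond the citations.
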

 Using this lemma, it is obvious that \eqref{h2} is satisfied under the  assumptions of Corollary \ref{2.2}.
Moreover, \eqref{h3} can be replaced by 
\begin{align*}
\lim_{n\to \infty} \mathbb{E}\left(\frac{\#(\sigma_n)}{n}\right) \to 0.
\end{align*}
This result is a consequence of the stability under conjugation. Indeed,
\begin{align*}
\mathbb{P}(\sigma_n(1)=1)=\frac{1}{n} \sum_{i=1}^n\mathbb{P}(\sigma_n(i)=i) \leq \mathbb{E}\left(\frac{\#(\sigma_n)}{n}\right).
\end{align*}
Using this observation, it is obvious that \eqref{h3} is satisfied under assumptions of Corollary \ref{4.2}. 
\paragraph*{} \cite{pitman1992two} introduced a two-parameters  generalization of the Ewens distribution. Using the same notations as in \citep{pitman1992two},  we can apply Theorems \ref{Airyens} for $\alpha<\frac{1}{6}$ and Theorem \ref{VCthm} for $\alpha<1$.
\paragraph*{} The bound $n^\frac 16$ of Theorem \ref{the1}  may not be optimal. The best counterexample we found is when the number of cycles is of  order  $\sqrt{n}$ for the general case and of order  $n$ for virtual random  permutations.
Nevertheless, using the same lines of proof, we can obtain  the convergence of $\frac{\ell(\sigma_n)}{\sqrt{n}}$ with optimal hypotheses.
\begin{proposition} 
Assume that the sequence of random permutations  $(\sigma_n)_{n\geq 1}$ satisfies \eqref{h1} and  the number of cycles is such that: For all $\varepsilon>0$,
\begin{equation*}
\lim_{n\to \infty}\mathbb{P}\left(\frac{\#(\sigma_n)}{{\sqrt{n}} }>\varepsilon\right) =0,
\end{equation*}
then  $\forall\varepsilon>0$, \begin{equation*} 
\lim_{n\to\infty} \mathbb{P}\left(\left|\frac{\ell(\sigma_n)}{\sqrt{n}} - 2\right|>\varepsilon\right)=\lim_{n\to\infty} \mathbb{P}\left(\left|\frac{\underline{\ell}(\sigma_n)}{\sqrt{n}} - 2\right|>\varepsilon\right)=0.
\end{equation*}
\end{proposition}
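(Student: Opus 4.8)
The plan is to adapt, almost verbatim, the coupling between a conjugation--invariant random permutation and a uniform one that is used to prove Theorem~\ref{the1}: for a law of large numbers one only needs convergence in probability, so the error term coming from the coupling is allowed to be as large as $o(\sqrt n)$ instead of $o(n^{1/6})$. Concretely, \eqref{h1} forces $\mathbb{P}(\sigma_n=\sigma)$ to depend only on the cycle type of $\sigma$, so conditionally on its random cycle type $\Lambda_n$ the permutation $\sigma_n$ is uniform on the conjugacy class $C_{\Lambda_n}$. I would fix the $n$-cycle $c=(1\,2\,\cdots\,n)$; a standard splitting computation shows that for each cycle type $\mu$ with $k$ parts there are transpositions $t^{\mu}_1,\dots,t^{\mu}_{k-1}$ with $\gamma_\mu:=c\,t^{\mu}_1\cdots t^{\mu}_{k-1}$ of cycle type $\mu$ (each multiplication cuts one cycle into two of prescribed lengths). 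Taking $\pi$ uniform on $\mathfrak{S}_n$ independent of $\Lambda_n$ and setting $\hat c_n:=\pi c\pi^{-1}$,
\[
\tilde\sigma_n:=\pi\,\gamma_{\Lambda_n}\,\pi^{-1}=\hat c_n\cdot(\pi t^{\Lambda_n}_1\pi^{-1})\cdots(\pi t^{\Lambda_n}_{\#(\sigma_n)-1}\pi^{-1})
\]
gives a uniform $n$-cycle $\hat c_n$, the pathwise identity $\#(\tilde\sigma_n)=\#(\sigma_n)$, and (since conditionally on $\Lambda_n$ the permutation $\tilde\sigma_n$ is uniform on $C_{\Lambda_n}$) the equality in law $\tilde\sigma_n\overset{d}{=}\sigma_n$.

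Next I would use that $\ell$ and $\underline\ell$ are $2$-Lipschitz under a single transposition: right multiplication by $(a\,b)$ merely swaps two entries of the one-line notation, and deleting those two positions changes $\ell$ (resp.\ $\underline\ell$) by at most $2$ for both the original and the swapped word, so $|\ell(\tau)-\ell(\tau(a\,b))|\le 2$ and likewise for $\underline\ell$. Applying this to the $\#(\sigma_n)-1$ conjugated transpositions above yields $|\ell(\tilde\sigma_n)-\ell(\hat c_n)|\le 2(\#(\sigma_n)-1)$ and $|\underline\ell(\tilde\sigma_n)-\underline\ell(\hat c_n)|\le 2(\#(\sigma_n)-1)$. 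Finally, Corollary~\ref{2.1} applied with $\theta_n\equiv 0$ gives \eqref{TW} for the uniform $n$-cycle $\hat c_n$, whence $\ell(\hat c_n)/\sqrt n\to 2$ and $\underline\ell(\hat c_n)/\sqrt n\to 2$ in probability; combined with the bound above, the hypothesis $\#(\sigma_n)=o_{\mathbb P}(\sqrt n)$, and $\tilde\sigma_n\overset{d}{=}\sigma_n$ (so that $\mathbb{P}(|\ell(\sigma_n)/\sqrt n-2|>\varepsilon)$ can be estimated through $\tilde\sigma_n$ by splitting on whether $2(\#(\sigma_n)-1)/\sqrt n>\varepsilon/2$), this gives $\ell(\sigma_n)/\sqrt n\to 2$ and $\underline\ell(\sigma_n)/\sqrt n\to 2$ in probability, which is the claim.

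I do not expect a genuine obstacle here: everything reduces to the coupling already built for Theorem~\ref{the1}, the only care needed being to perform it conditionally on the cycle type so that the number of transpositions is exactly $\#(\sigma_n)-1$ rather than uncontrolled. The scale $\sqrt n$ is forced by the matching of orders of magnitude — one transposition perturbs $\ell$ by $O(1)$ while $\ell(\hat c_n)$ is of order $\sqrt n$ — which is also why near-identity permutations, for which $\#(\sigma_n)$ is of order $n$, are the natural barrier to lowering the exponent further, as noted just before the statement.
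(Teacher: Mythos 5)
Your proof is correct, and it rests on the same two pillars as the paper's (unwritten, ``same lines as Theorem~\ref{the1}'') argument: the $2$-Lipschitz behaviour of $\ell$ and $\underline{\ell}$ under a single transposition (the paper's Lemma~\ref{lem}) and the known behaviour of a uniform $n$-cycle (Ewens with $\theta=0$, via Corollary~\ref{2.1}), with the hypothesis $\#(\sigma_n)=o_{\mathbb P}(\sqrt n)$ absorbing the $2(\#(\sigma_n)-1)$ error at the law-of-large-numbers scale. Where you genuinely diverge is in how the coupling to a uniform $n$-cycle is realized. The paper runs the Markov operator $T$ that repeatedly merges two uniformly chosen cycles, and must then prove (Lemma~\ref{lemmma11}, via a somewhat heavy change-of-variables computation) that $T$ preserves conjugation-invariance so that $T^{n-1}(\sigma_n)$ ends up uniform on $n$-cycles. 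You instead go in the opposite direction: condition on the cycle type $\Lambda_n$, write a fixed representative of that type as $c\,t_1\cdots t_{k-1}$ with $c$ a fixed $n$-cycle, and conjugate everything by an independent uniform $\pi$. This makes both the uniformity of $\hat c_n=\pi c\pi^{-1}$ and the identity in law $\tilde\sigma_n\overset{d}{=}\sigma_n$ immediate consequences of the orbit-stabilizer count, at the price of having to exhibit the splitting transpositions $t^{\mu}_i$ (which is standard). Your version is arguably the more elementary packaging of the same coupling; the paper's $T$-operator has the advantage of being defined intrinsically on permutations, without first extracting the cycle type. Either way the quantitative content is identical, and the rest of your argument (the $\varepsilon/2$ split and the convergence in probability of $\ell(\hat c_n)/\sqrt n$ and $\underline\ell(\hat c_n)/\sqrt n$ to $2$) is exactly what is needed.
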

In this case, the bound $\sqrt{n}$ in the second condition is optimal.

\section{Proof of results}
\subsection{Proof of Theorem \ref{the1}}
\label{proof1}
The key argument of our  proof is the following lemma:
\begin{lemma} \label{lem}
For any permutation $\sigma$ and for any transposition $\tau$, 
\begin{equation*}
|\ell(\sigma \circ \tau )-\ell(\sigma)|\leq 2,
\quad |\underline \ell(\sigma)-\underline \ell(\sigma \circ \tau )|\leq 2.
\end{equation*}

\end{lemma}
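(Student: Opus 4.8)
The plan is to show that multiplying $\sigma$ on the right by a transposition $\tau=(a\;b)$ changes the length of the longest increasing subsequence by at most $2$, and then obtain the statement for $\underline\ell$ by the same argument applied to the reversal of the one-line notation (or equivalently by noting $\underline\ell(\sigma)=\ell(w_0\circ\sigma)$ for the longest element $w_0$, which is unaffected in a controlled way). Write $\rho=\sigma\circ\tau$; the two permutations agree everywhere except that the values $\sigma(a)$ and $\sigma(b)$ are swapped in positions $a$ and $b$. So $\sigma$ and $\rho$ have the same graph as a set of points in $\{1,\dots,n\}^2$ except for two points.

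First I would prove $\ell(\rho)\ge \ell(\sigma)-2$. Take a longest increasing subsequence of $\sigma$, viewed as a chain in the point set $\{(i,\sigma(i))\}$. At most two of its points are among the two points $(a,\sigma(a)),(b,\sigma(b))$ that get moved; deleting those (at most two) points leaves an increasing subsequence of length $\ge \ell(\sigma)-2$ consisting entirely of points common to $\sigma$ and $\rho$, hence an increasing subsequence of $\rho$. Therefore $\ell(\rho)\ge\ell(\sigma)-2$. By symmetry — since $\sigma=\rho\circ\tau$ as well, $\tau$ being an involution — the same argument gives $\ell(\sigma)\ge\ell(\rho)-2$, so $|\ell(\sigma)-\ell(\rho)|\le 2$. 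The key simple observation making this work is that an increasing subsequence is just a chain in the planar point set and that only two points of the point set change.

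For the decreasing case, I would apply the result just proved to a transformed permutation: if $\tilde\sigma(i):=\sigma(n+1-i)$, then decreasing subsequences of $\sigma$ correspond bijectively to increasing subsequences of $\tilde\sigma$, so $\underline\ell(\sigma)=\ell(\tilde\sigma)$; moreover $\widetilde{\sigma\circ\tau}=\tilde\sigma\circ\tilde\tau$ where $\tilde\tau$ is again a transposition (conjugate of $\tau$ by the reversal), and then $|\underline\ell(\sigma\circ\tau)-\underline\ell(\sigma)|=|\ell(\tilde\sigma\circ\tilde\tau)-\ell(\tilde\sigma)|\le 2$ by the first part. I do not anticipate a serious obstacle here; the only mild subtlety is bookkeeping the ``at most two points change'' reduction carefully (in fact one might sharpen the reduction, but $2$ suffices and keeps the argument clean), and making sure the reversal trick is stated so that $\tilde\tau$ is genuinely a transposition.
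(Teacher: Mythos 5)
Your proposal is correct and follows essentially the same route as the paper: delete the at most two positions affected by $\tau$ from a longest increasing subsequence of $\sigma$ to get an increasing subsequence of $\sigma\circ\tau$ of length at least $\ell(\sigma)-2$, then use that $\tau$ is an involution for the reverse inequality. Your reduction of the decreasing case to the increasing case via reversal (with $\tilde\tau=r\tau r$ still a transposition) is a valid, slightly more economical substitute for the paper's ``the proof is similar.''
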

\begin{proof}
Let $\sigma$ be  a permutation.  By definition of $\ell(\sigma)$,  there exists ${i_1<i_2<\dots<i_ {\ell(\sigma)}}$ such that ${\sigma(i_1)<\dots<\sigma(i_{\ell(\sigma)})}$. Let $\tau=(j,k)$  be a transposition
and $i'_1,i'_2,\dots,i'_m$ be   the same sequence as $i_1,i_2,\dots, i_{\ell(\sigma)}$ after removing $j$ and $k$ if needed. We have  $\sigma(i'_1)<\dots<\sigma(i'_{m})$. In particular,  $\ell(\sigma)-2\leq m\leq \ell(\sigma)$. Knowing that  $\forall i\notin \{j,k\}$, $\sigma\circ\tau (i)=\sigma(i)$, then  $$\sigma\circ \tau (i'_1)<\dots<\sigma\circ \tau (i'_{m}).$$ Therefore,
\begin{align*}
\ell(\sigma)-\ell(\sigma \circ \tau )\leq 2.
\end{align*}
We obtain the second inequality by replacing $\sigma$ by $\sigma \circ \tau$.
 For $\underline{\ell}(\sigma)$ the proof is similar.
\end{proof}
 Let $\sigma_n$ be a random permutation stable under conjugation. To prove Theorem \ref{the1}, the idea is to modify $\sigma_n$ to obtain a random permutation  stable under conjugation with only one cycle. We define  the following Markov operator T.  If  the realisation  $\sigma$ of $\sigma_n$ has one cycle, $\sigma$ remains unchanged ($T(\sigma)=\sigma$). Otherwise, we  choose with uniform probability two different cycles $C_1$ and $C_2$, and then independently two elements $i \in C_1$ and $j \in C_2$ uniformly within each cycle. In this case, $T(\sigma)=\sigma\circ (i,j)$. 
For example,  for $n=3$,   transitions' probabilities of $T$ are  given in Figure \ref{figm}.  
\begin{figure}[H]
\centering
\begin{tikzpicture}

    \node[state] (s1)  {Id};
    \node[state, below=1cm of s1] (t1) {$(1,2)$};
    \node[state, right=3cm of t1] (t2) {$(2,3)$};
	\node[state, left=3cm of t1] (t3) {$(1,3)$};
	\node[state, below=1cm  of t3] (c1) {$(1,2,3)$};
	\node[state, right= 7cm of c1] (c2) {$(1,3,2)$};
        \draw[every loop,
        line width=0.3mm,
        auto=left,
        >=latex,
        ]
            (s1) edge[]  node {$\frac{1}{3}$} (t1)
             (s1) edge[]  node {$\frac{1}{3}$} (t3)
              (s1) edge[]  node {$\frac{1}{3}$} (t2)
                 (t1) edge[ ]  node {$\frac{1}{2}$} (c1)
       (t2) edge[ ]  node {$\frac{1}{2}$} (c1)
                 (t3) edge[ ]  node {$\frac{1}{2}$} (c1)
                 (t1) edge[]  node {$\frac{1}{2}$} (c2)
                 (t2) edge[]  node {$\frac{1}{2}$} (c2)
                 (t3) edge[]  node {$\frac{1}{2}$} (c2)
                 (c2) edge[loop below]  node {1} (c2)
                 (c1) edge[loop below]  node {1} (c1);
    \end{tikzpicture}
    \caption{The transition probabilities of $T$ on $\mathfrak{S}_3$}
    \label{figm}
\end{figure}
We denote by $T^k(\sigma_n)$ the random permutation obtained after  applying $k$ times the operator $T$. Table \ref{T1} sums up distributions after  different steps if we start from the uniform distribution on $\mathfrak{S}_3$. 
\begin{table}[H]
\centering
\begin{tabular}{|l|l|l|l|}
\hline
          & $\sigma_3$      & $T(\sigma_3)$      & $T^2(\sigma_3)$   \\ \hline
Id        & $1/6$ & $0$            & $0$           \\ \hline
$(1,2)$   & $1/6$ & $1/18$ & $0$           \\ \hline
$(1,3)$   & $1/6$ & $1/18$ & $0$           \\ \hline
$(2,3)$   & $1/6$ & $1/18$ & $0$           \\ \hline
$(1,2,3)$ & $1/6$ & $5/12$ & $1/2$ \\ \hline
$(1,3,2)$ & $1/6$ & $5/12$ & $1/2$ \\ \hline
\end{tabular}
\caption{Transitions for the uniform setting }
\label{T1}
\end{table}
Note that for all positive integer $i<n$,  
\begin{equation}\label{cs1}
\#(T^{i}(\sigma_n))\overset{a.s}{=}\max(\#(\sigma_n)-i,1).
\end{equation}
\begin{lemma}
\label{lemmma11}
If $(\sigma_n)_{n\geq 1}$ is stable under conjugation, then for all positive integer $n$, the law of $T^{n-1}(\sigma_n)$  is the uniform distribution on the set of permutations with a unique cycle. More formally,
\begin{align*}
\mathbb{P}\left(T^{n-1}(\sigma_n)=\sigma\right)=\frac{1}{(n-1)!}\mathbbm{1}_{\#({\sigma})=1}.
\end{align*}
\end{lemma}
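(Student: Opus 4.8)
The plan is to prove the claim by induction on the number of cycles, exploiting the stability under conjugation to show that at each step the operator $T$ produces a random permutation that is again stable under conjugation, and is moreover \emph{uniform} on each conjugacy class (i.e., uniform among permutations with a prescribed cycle type). Concretely, I would first record the key structural fact: \eqref{cs1} already tells us that after $n-1$ applications of $T$ the resulting permutation almost surely has exactly one cycle, so the only thing to prove is that the conditional law is uniform on the $(n-1)!$ permutations with a single cycle. Since all $(n-1)!$ full cycles form a single conjugacy class, it suffices to show that $T^{n-1}(\sigma_n)$ is stable under conjugation.

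First I would establish the main lemma: if $\rho_n$ is any random permutation in $\mathfrak{S}_n$ that is stable under conjugation, then $T(\rho_n)$ is also stable under conjugation. The point is that the recipe defining $T$ — ``if $\sigma$ has one cycle do nothing; otherwise pick two distinct cycles uniformly, pick one element uniformly in each, and compose with that transposition'' — is \emph{equivariant} under conjugation: for any fixed $\omega \in \mathfrak{S}_n$, conjugating $\sigma$ by $\omega$ relabels the cycles and their elements bijectively, so the conditional law of $T(\omega^{-1}\sigma\omega)$ is the pushforward by conjugation-by-$\omega$ of the conditional law of $T(\sigma)$. More precisely, writing $\mathbb{P}(T(\rho_n)=\sigma') = \sum_\sigma \mathbb{P}(\rho_n=\sigma)\, q(\sigma,\sigma')$ for the transition kernel $q$ of $T$, the equivariance reads $q(\omega^{-1}\sigma\omega,\ \omega^{-1}\sigma'\omega) = q(\sigma,\sigma')$, and combining this with \eqref{h1} for $\rho_n$ gives \eqref{h1} for $T(\rho_n)$ after the change of summation variable $\sigma \mapsto \omega^{-1}\sigma\omega$.

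Then I would iterate: by induction, $T^{k}(\sigma_n)$ is stable under conjugation for every $k$, so in particular $T^{n-1}(\sigma_n)$ is. Combined with the almost-sure statement $\#(T^{n-1}(\sigma_n))=1$ from \eqref{cs1} (valid because $\#(\sigma_n)\leq n$ always), a conjugation-invariant law supported on the single conjugacy class of $n$-cycles must be the uniform distribution on that class, which has cardinality $(n-1)!$. This yields
\begin{equation*}
\mathbb{P}\left(T^{n-1}(\sigma_n)=\sigma\right)=\frac{1}{(n-1)!}\,\mathbbm{1}_{\#(\sigma)=1},
\end{equation*}
as desired.

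The main obstacle is making the equivariance argument for $T$ fully rigorous, i.e.\ carefully checking that the two-step random choice (uniform pair of cycles, then uniform element within each chosen cycle) transforms correctly under conjugation and that the ``do nothing if already a single cycle'' branch is handled consistently. This is essentially bookkeeping: conjugation by $\omega$ sends a cycle $(a_1\,a_2\,\cdots\,a_j)$ to $(\omega^{-1}(a_1)\,\cdots\,\omega^{-1}(a_j))$, so it induces a bijection on the set of cycles preserving their lengths, hence preserves the uniform choice of an unordered pair of cycles and the uniform choice of an element within each; and $\omega^{-1}(\sigma\circ(i,j))\omega = (\omega^{-1}\sigma\omega)\circ(\omega^{-1}(i),\omega^{-1}(j))$, so the transposition used is exactly the image of the original under $\omega$. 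Once this identity $q(\omega^{-1}\sigma\omega,\omega^{-1}\sigma'\omega)=q(\sigma,\sigma')$ is in hand, the rest is a one-line computation.
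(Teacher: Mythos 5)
Your proposal is correct and follows essentially the same route as the paper: both establish that the transition kernel of $T$ is equivariant under conjugation (the paper writes this out explicitly via the kernel involving the cycle lengths $\mathcal{C}_\sigma(i)$, you argue it via the relabelling bijection on cycles and the identity $\omega^{-1}(\sigma\circ(i,j))\omega=(\omega^{-1}\sigma\omega)\circ(\omega^{-1}(i),\omega^{-1}(j))$), then propagate stability under conjugation through the $n-1$ iterations, invoke \eqref{cs1} for the almost-sure reduction to one cycle, and conclude by uniformity of a conjugation-invariant law on the single conjugacy class of $n$-cycles.
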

\begin{proof}
 First, by construction, if $\sigma_n$ is stable under conjugation, $T(\sigma_n)$ is also stable under conjugation. Indeed, if $\hat{\sigma}_1$, $\hat{\sigma}_2 \in \mathfrak{S}_n$ then
\begin{align*}
\mathbb{P}(T(\sigma_n)=\hat{\sigma}_1)&=\sum_{\sigma\in \mathfrak {S}_n} \sum_{i<j} \left( \frac{\mathbbm{1}_{\#(\hat{\sigma}_1)=\#(\sigma)-1}\mathbbm{1}_{\sigma^{-1}\circ\ \hat{\sigma}_1=(i,j)}}{\mathcal{C}_\sigma(i)\mathcal{C}_\sigma(j) {{\#(\sigma)}\choose{2}}} +\mathbbm{1}_{\#(\sigma)=1}\mathbbm{1}_{\sigma=\hat{\sigma}_1}\right)\mathbb{P}(\sigma_n=\sigma)
\\&=\sum_{\sigma\in \mathfrak {S}_n} \sum_{i<j} \left( \frac{\mathbbm{1}_{\#(\hat\sigma_2\circ\hat{\sigma}_1\circ\hat\sigma_2^{-1})=\#(\sigma)-1}\mathbbm{1}_{\hat\sigma_2\circ\sigma^{-1}\circ\ \hat{\sigma}_1\circ\hat\sigma_2^{-1}=(\hat\sigma_2(i),\hat\sigma_2(j))}}{\mathcal{C}_{\hat\sigma_2\circ \sigma\circ\hat\sigma_2^{-1}}(\hat\sigma_2(i))\mathcal{C}_{\hat\sigma_2\circ \sigma\circ\hat\sigma_2^{-1}}(\hat\sigma_2(j)){{\#(\hat\sigma_2\circ \sigma\circ\hat\sigma_2}\choose{2}}} +\mathbbm{1}_{\#(\hat\sigma_2\circ{\sigma}\circ\hat\sigma_2^{-1})=1}\mathbbm{1}_{\hat\sigma_2\circ{\sigma}\circ\hat\sigma_2^{-1}=\hat\sigma_2\circ\hat{\sigma}_1\circ\hat\sigma_2^{-1}}\right)\\&\times\mathbb{P}(\sigma_n=\hat\sigma_2\circ{\sigma}\circ\hat\sigma_2^{-1})
\\
&=\sum_{\sigma\in \mathfrak {S}_n} \sum_{i<j}\left( \frac{\mathbbm{1}_{\#(\hat\sigma_2\circ\ \hat{\sigma}_1\circ\hat\sigma_2^{-1})=\#(\sigma)-1}\mathbbm{1}_{\sigma^{-1}\circ\ \hat\sigma_2\circ\ \hat{\sigma}_1\circ\hat\sigma_2^{-1}=(i,j)}}{\mathcal{C}_\sigma(i)\mathcal{C}_\sigma(j){{\#(\sigma)}\choose{2}}} +\mathbbm{1}_{\#(\sigma)=1}\mathbbm{1}_{\sigma=\hat\sigma_2\circ\ \hat{\sigma}_1\circ\hat\sigma_2^{-1}}\right)\mathbb{P}(\sigma_n=\sigma)
\\&=
\mathbb{P}(T(\sigma_n)=\hat\sigma_2\circ\hat{\sigma}_1\circ\hat\sigma_2^{-1})
,
\end{align*}
where $\mathcal{C}_\sigma(i)$ is the length  of the cycle of $\sigma$ containing $i$. In particular, the law of  $T^{n-1}(\sigma_n)$ is stable under conjugation.
Moreover, using  \eqref{cs1},   
\begin{align} \label{cs2}
\#(T^{n-1} \left(\sigma_{n})\right)\overset{a.s}{=}\max(\#(\sigma_n)-n+1,1)=1. \end{align}  
Knowing that all elements of $\mathfrak{S}_n$ with a unique cycle belong to the same class of conjugation, they are equally distributed and  Lemme~\ref{lemmma11} follows from  \eqref{cs2}.
\end{proof}
The previous Lemma is equivalent to say that $T^{n-1}(\sigma_n)$ follows the  Ewens distribution on $\mathfrak{S}_n$ with parameter 
$\theta=0$.
\begin{proof}[Proof of Theorem \ref{the1}]
Equality \eqref{cs1} implies that 
$T^{n-1}(\sigma_n)\overset{a.s}= T^{\#(\sigma_n)-1}(\sigma_n)$. Therefore using Lemma \ref{lem}, we obtain  almost surely that:
\begin{align*}
|\ell(T^{n-1}(\sigma_n))-\ell(\sigma_n)|=
|\ell(T^{\#(\sigma_n)-1}(\sigma_n))-\ell(\sigma_n)|
\leq  2(\#(\sigma_n)-1).
\end{align*}
Thus, if $\sigma_n$ satisfies the hypothesis  \eqref{h2}, then $\forall \varepsilon >0 $, 
\begin{align}\label{end1}  \mathbb{P}\left(\left|\frac{\ell(T^{n-1}(\sigma_n))-\ell(\sigma_n)}{n^\frac{1}{6}} \right|> \varepsilon\right) =0.\end{align}  
\paragraph*{} Using Lemma \ref{lemmma11}, $T^{n-1}(\sigma_n)$ does not depend on the law of $\sigma_n$. Therefore,  it is enough to prove Theorem \ref{the1} for one particular case. In fact, the convergence \eqref{TW} has been obtained for the uniform setting, see Theorem \ref{dbj}. By choosing $(\sigma_n)_{n\geq 1}$ a sequence of random permutations following the uniform distribution,  we have then  \eqref{TW}  for the Ewens distribution with parameter $\theta=0$. For the general case, if the sequence $(\sigma_n)_{n\geq 1}$ satisfies \eqref{h1} and \eqref{h2}, we can conclude using Lemma \ref{lemmma11} and \eqref{end1}.

The same argument can be applied for the length of  longest decreasing subsequence.  
 \end{proof}
\subsection{Proof of results related to the Robinson–Schensted transform of random permutations}
\label{RSKPROOF}
\paragraph*{} To prove Theorems \ref{Airyens} and \ref{VCthm} we need to recall a well-known property of the  Robinson–Schensted correspondence. Let $\sigma \in \mathfrak{S}_n$. 
We denote  \begin{align*}
\mathfrak{I}_1(\sigma):&=\{s\subset\{1,2,\dots,n\};\; \forall i,j \in s,\; (i-j)(\sigma(i)-\sigma(j))\geq 0 \},
\\ \mathfrak{D}_1(\sigma):&=\{s\subset\{1,2,\dots,n\};\; \forall i,j \in s,\; (i-j)(\sigma(i)-\sigma(j))\leq 0 \},
\\\mathfrak{I}_{k+1}(\sigma):&=\{s\cup s',\; s\in \mathfrak{I}_k,\;s'\in \mathfrak{I}_1\},
\\ \mathfrak{D}_{k+1}(\sigma):&=\{s\cup s',\; s\in \mathfrak{D}_k,\;s'\in \mathfrak{D}_1\}.
\end{align*}
We have then
\begin{lemma} \label{RSKLEMMA} \citep*{GREENE1974254}
For any permutation $ \sigma\in \mathfrak{S}_n$,
\begin{align*}
\max_{s\in \mathfrak{I}_i(\sigma)} |s| =\sum_{k=1}^i \lambda_k(\sigma), \quad
\max_{s\in \mathfrak{D}_i(\sigma)} |s| =\sum_{k=1}^i \lambda'_k(\sigma).
\end{align*}
\end{lemma}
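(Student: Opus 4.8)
The plan is to recognise Lemma~\ref{RSKLEMMA} as Greene's theorem and to prove the identity for $\mathfrak{I}_i$ in two steps: first reduce the computation of $\max_{s\in\mathfrak{I}_i(\sigma)}|s|$ to that of one conveniently chosen permutation in the Knuth class of $\sigma$, and then carry out that computation by hand. Write $P(\sigma)$ for the insertion tableau of $\sigma$ under the Robinson--Schensted correspondence, so that $P(\sigma)$ has shape $\lambda(\sigma)$, and set $g_i(\sigma):=\max_{s\in\mathfrak{I}_i(\sigma)}|s|$, the largest number of positions coverable by $i$ increasing subsequences of $\sigma$. The identity for $\mathfrak{D}_i$ will not require a separate argument: replacing $\sigma$ by $\tilde\sigma:=(k\mapsto\sigma(n+1-k))$ turns increasing subsequences into decreasing ones, hence exchanges $\mathfrak{I}_i(\tilde\sigma)$ with $\mathfrak{D}_i(\sigma)$, and transposes the Robinson--Schensted shape, $\lambda(\tilde\sigma)=\lambda(\sigma)'$ (a classical property of the correspondence, see e.g. \citep*[Chapter~3]{Sagan2001}); applying the $\mathfrak{I}$-identity to $\tilde\sigma$ then gives $\max_{s\in\mathfrak{D}_i(\sigma)}|s|=\sum_{k=1}^i\lambda'_k(\sigma)$. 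Alternatively one may simply repeat the argument below with the roles of rows and columns of $P(\sigma)$ interchanged.

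For the first step I would prove that $g_i$ is constant on Knuth classes. Recall that two permutations are Knuth equivalent when one passes from one to the other by elementary moves, each replacing three consecutive entries $y\,x\,z$ of the one-line notation $\sigma(1)\cdots\sigma(n)$ by $y\,z\,x$, or $x\,z\,y$ by $z\,x\,y$, subject to $x<y<z$. Fix one such move, affecting positions $p,p+1,p+2$, and a family $s_1,\dots,s_i$ of increasing subsequences of $\sigma$ realising $g_i(\sigma)$; changing only which of the three affected positions each $s_t$ uses, one builds a family of $i$ increasing subsequences of the transformed permutation of the same total cardinality, and symmetrically in the other direction, whence equality of the two values of $g_i$. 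This verification is an elementary but slightly tedious case analysis according to how many of the three positions are used and by which of the $s_t$, exploiting that the two one-line notations agree outside $\{p,p+1,p+2\}$ and that the inequality $x<y<z$ pins down the order of the relevant values; I expect this to be the only genuine obstacle in the proof. Granting it, one invokes the standard fact that $\sigma$ is Knuth equivalent to the row reading word $w$ of $P(\sigma)$ --- the word obtained by listing the rows of $P(\sigma)$ from the bottom row up, each read left to right --- so that $g_i(\sigma)=g_i(w)$; see \citep*[Chapter~3]{Sagan2001}.

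The second step is the explicit evaluation $g_i(w)=\lambda_1+\cdots+\lambda_i$ with $\lambda=\lambda(\sigma)$. For the lower bound, the entries of the $r$-th row of $P(\sigma)$ occupy, in $w$, a consecutive strictly increasing block; taking these blocks for $r=1,\dots,\min(i,\lambda'_1)$ yields pairwise disjoint increasing subsequences whose union has $\lambda_1+\cdots+\lambda_i$ elements. For the upper bound, observe that the entries of the $j$-th column of $P(\sigma)$ appear in $w$ in strictly decreasing order (columns increase downward, while $w$ lists lower rows earlier), so any increasing subsequence of $w$ meets the $j$-th column in at most one position; hence $i$ increasing subsequences together cover at most $\min(i,\lambda'_j)$ of the $\lambda'_j$ boxes of column $j$, and summing over $j$ gives $g_i(w)\le\sum_{j\ge1}\min(i,\lambda'_j)=\lambda_1+\cdots+\lambda_i$, the last equality being the box count of $\lambda$ restricted to its first $i$ rows, taken column by column. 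Combining the two steps, $\max_{s\in\mathfrak{I}_i(\sigma)}|s|=\sum_{k=1}^i\lambda_k(\sigma)$, and the dual identity follows as explained. A route that bypasses the Knuth-invariance case check is Viennot's light-and-shadow construction, which realises the rows and columns of $\lambda(\sigma)$ geometrically and makes both bounds transparent; I would mention it as an alternative but not develop it here.
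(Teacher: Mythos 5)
The paper offers no proof of this lemma to compare against: it is quoted directly from \cite{GREENE1974254}, with a pointer to \citep[Theorem~3.7.3]{Sagan2001}. Your outline is the standard proof of Greene's theorem (Knuth invariance of $g_i$, reduction to the row reading word $w$ of $P(\sigma)$, explicit evaluation there, duality by reversal), and the second and third steps are correct and complete: the row blocks of $w$ give the lower bound; the column entries of $P(\sigma)$ occur in $w$ in strictly decreasing order, so each increasing subsequence meets each column at most once, whence the bound $\sum_j\min(i,\lambda'_j)=\sum_{k\le i}\lambda_k$; and the reversal argument correctly transports the statement to $\mathfrak{D}_i$ via Schensted's $\lambda(\tilde\sigma)=\lambda(\sigma)'$.

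The genuine gap is that the one nontrivial step --- invariance of $g_i$ under a single Knuth move --- is exactly the step you defer, and the repair you sketch (``changing only which of the three affected positions each $s_t$ uses'') fails in the hardest case. Consider the move $y\,x\,z\mapsto y\,z\,x$ with $x<y<z$ at positions $p,p+1,p+2$, and suppose (after making the $s_t$ pairwise disjoint) that $s_t=A\cup\{p+1,p+2\}\cup B$ contains both $x$ and $z$ while a different $s_{t'}=C\cup\{p\}\cup D$ contains $y$. Then $A$ has values $<x$ at positions $<p$, $B$ has values $>z$, $C$ has values $<y$ (not necessarily $<x$), and $D$ has values $>y$ (not necessarily $>z$). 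One checks that no reassignment of the three middle positions alone preserves the count: the subsequence built on $C$ and $D$ can safely absorb only $y$, and the one built on $A$ and $B$ at most two of the letters of the new middle word $y\,z\,x$, so a point is lost. The correct repair must also exchange the tails of the two subsequences, e.g.\ $\tilde s_t:=A\cup\{p+2\}\cup D$ and $\tilde s_{t'}:=C\cup\{p,p+1\}\cup B$, which are increasing in the new word and jointly cover the same positions; an analogous surgery is needed for the move $x\,z\,y\mapsto z\,x\,y$. So the case analysis is not mere bookkeeping over three positions but requires this pairwise surgery, and until it is written out (or the lemma is simply cited, as the paper does), the proof is incomplete at its central point.
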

In particular, $$\max_{s\in \mathfrak{I}_1(\sigma)} |s| =\lambda_1(\sigma)=\ell(\sigma),\quad \max_{s\in \mathfrak{D}_1(\sigma)} |s| =\lambda'_1(\sigma)=\underline{\ell}(\sigma).$$
\paragraph*{} This result is proved first by \cite{GREENE1974254} (see also \citep*[Theorem 3.7.3]{Sagan2001}). It will be the keystone to prove Theorem \ref{Airyens} and Theorem \ref{VCthm} as it implies  the following lemma   which is the counterpart of Lemma~\ref{lem}.
\begin{lemma} \label{lemma2}
For any permutation $\sigma$ and transposition  $\tau$,\begin{equation} \label{sum}
\left|\sum_{k=1}^i \lambda_k(\sigma)-{\lambda}_k\left(\sigma\circ\tau\right)\right| \leq 2, \quad
\left|\sum_{k=1}^i \lambda'_k(\sigma)-\lambda'_k\left(\sigma\circ\tau\right)\right| \leq 2.
\end{equation}
Moreover, 
\begin{equation} \label{sep}
\left|\lambda_i(\sigma)-\lambda_i\left(\sigma\circ\tau\right)\right| \leq 4, \quad
\left|\lambda'_i(\sigma)-\lambda'_i\left(\sigma\circ\tau\right)\right| \leq 4.
\end{equation}
\end{lemma}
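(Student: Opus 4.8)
The plan is to deduce Lemma \ref{lemma2} from Lemma \ref{RSKLEMMA} (Greene's theorem) essentially by repeating, at the level of $i$-fold unions of increasing subsets, the combinatorial argument already used in the proof of Lemma \ref{lem}. First I would prove the bound \eqref{sum}. Fix a permutation $\sigma$, a transposition $\tau=(a,b)$, and an index $i$. By Lemma \ref{RSKLEMMA} there is a set $s\in\mathfrak{I}_i(\sigma)$ with $|s|=\sum_{k=1}^i\lambda_k(\sigma)$; write $s=s_1\cup\cdots\cup s_i$ with each $s_m\in\mathfrak{I}_1(\sigma)$. Delete from each $s_m$ whichever of $a,b$ it happens to contain, obtaining $s_m'\subset s_m\setminus\{a,b\}$; set $s'=s_1'\cup\cdots\cup s_i'$. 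Since $\sigma$ and $\sigma\circ\tau$ agree off $\{a,b\}$, each $s_m'$ is still an increasing subset for $\sigma\circ\tau$, so $s'\in\mathfrak{I}_i(\sigma\circ\tau)$; and at most two points were removed in total (the points $a$ and $b$ each disappear at most once from the union $s$), so $|s'|\ge |s|-2$. Hence $\sum_{k=1}^i\lambda_k(\sigma\circ\tau)\ge|s'|\ge\sum_{k=1}^i\lambda_k(\sigma)-2$. Applying the same inequality with $\sigma$ replaced by $\sigma\circ\tau$ and $\tau$ unchanged (note $(\sigma\circ\tau)\circ\tau=\sigma$) gives the reverse bound, proving the first inequality of \eqref{sum}; the decreasing-subsequence version is identical, using $\mathfrak{D}_i$ and Greene's identity for $\lambda'$.

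Next I would derive the individual-row bound \eqref{sep} from \eqref{sum} by a telescoping argument. Write $\lambda_i(\sigma)=\bigl(\sum_{k=1}^i\lambda_k(\sigma)\bigr)-\bigl(\sum_{k=1}^{i-1}\lambda_k(\sigma)\bigr)$ and likewise for $\sigma\circ\tau$. Then
\begin{align*}
\left|\lambda_i(\sigma)-\lambda_i(\sigma\circ\tau)\right|
&\le \left|\sum_{k=1}^i\lambda_k(\sigma)-\sum_{k=1}^i\lambda_k(\sigma\circ\tau)\right|
+\left|\sum_{k=1}^{i-1}\lambda_k(\sigma)-\sum_{k=1}^{i-1}\lambda_k(\sigma\circ\tau)\right|
\le 2+2=4,
\end{align*}
using \eqref{sum} for the indices $i$ and $i-1$ (for $i=1$ the second term is zero and the bound $\le 2$ holds a fortiori). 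The same computation with $\lambda'$ in place of $\lambda$ gives the decreasing-subsequence statement, completing the proof.

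I do not expect a genuine obstacle here: the content is entirely in Greene's theorem (Lemma \ref{RSKLEMMA}), which reduces both sums to extremal cardinalities of $i$-fold unions of monotone subsets, and once that dictionary is in place the perturbation-by-a-transposition estimate is the same "delete the two affected points from each piece of the union" argument that proved Lemma \ref{lem}. The only point requiring a moment's care is the bookkeeping for why deleting $a$ and $b$ from a union of $i$ sets costs at most $2$ and not $2i$: it is crucial that one works with the union $s$ itself rather than with the individual $s_m$, since a given element of $\{1,\dots,n\}$ lies in the union at most once. The passage from \eqref{sum} to \eqref{sep} is then a routine triangle inequality, and the constant $4$ is exactly what the telescoping forces.
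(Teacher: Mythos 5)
Your proof is correct and follows essentially the same route as the paper: both arguments use Greene's theorem (Lemma \ref{RSKLEMMA}) together with the observation that deleting the two points moved by $\tau$ from a set in $\mathfrak{I}_i(\sigma)$ (resp. $\mathfrak{D}_i(\sigma)$) produces a set in $\mathfrak{I}_i(\sigma\circ\tau)$ (resp. $\mathfrak{D}_i(\sigma\circ\tau)$) of cardinality at least $|s|-2$, and then obtain \eqref{sep} from \eqref{sum} by the same telescoping and triangle inequality. Your explicit bookkeeping of why the loss is $2$ rather than $2i$ is a welcome clarification of a step the paper leaves implicit.
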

\begin{proof} Let $\sigma$ be a permutation and $\tau=(l,m)$ be a transposition.  We have then for all integer $i$,
\begin{equation*}
\{s\setminus{\{l,m\}},s\in \mathfrak{I}_i(\sigma)\}\subset \mathfrak{I}_i(\sigma\circ\tau)
\end{equation*}
and similarly  
\begin{equation*}
\{s\setminus{\{l,m\}},s\in \mathfrak{D}_i(\sigma)\}\subset \mathfrak{D}_i(\sigma\circ\tau).
\end{equation*}
Consequently, by Lemma \ref{RSKLEMMA},
\begin{equation*}
\sum_{k=1}^i \lambda_k(\sigma)-{\lambda}_k(\sigma\circ\tau) \geq -2
, \quad \sum_{k=1}^i \lambda'_k(\sigma)-\lambda'_k(\sigma\circ\tau) \geq -2.
\end{equation*}
Using the same argument with $\sigma \circ \tau $ instead of $\sigma$, \eqref{sum}  follows. Moreover, since $$\lambda_{i+1}=\sum_{k=1}^{i+1}\lambda_k-\sum_{k=1}^i\lambda_k, \quad \lambda'_{i+1}=\sum_{k=1}^{i+1}\lambda'_k-\sum_{k=1}^i\lambda'_k,$$ 
the triangle inequality yields   \eqref{sep}.
\end{proof}
\begin{proof}[Proof of Theorem \ref{Airyens}]
Similarly to the proof of Theorem \ref{the1}, we will use the same Markov operator $T$ to compare our random permutation with the uniform distribution. Using Lemma \ref{lemma2} and the equality \eqref{cs1} we obtain  
\begin{equation} \label{sep2}
\left|\lambda_i(\sigma_n)-\lambda_i\left(T^{n-1}(\sigma_n)\right)\right| \leq 4(\#(\sigma_n)-1).
\end{equation}
Consequently, under \eqref{h2}, $\forall \varepsilon>0$,
\begin{equation}\label{fin2}
\lim_{n\to \infty} \mathbb{P}\left(\left|\frac{\lambda_i(\sigma_n)-\lambda_i\left(T^{n-1}(\sigma_n)\right)}{n^\frac{1}{6}}\right| >\varepsilon \right)= 0.
\end{equation}
The remainder of the proof is similar to the proof of Theorem \ref{the1}.
\end{proof} 
\paragraph*{}We will now prove Theorem \ref{VCthm}.
\paragraph*{}
Let $(O,\vec{x},\vec{y})$ be  the canonical frame of the Euclidean plane and $\vec{u}:=\frac{\sqrt{2}}{2}(\vec{x}+\vec{y})$, $\vec{v}:=\frac{\sqrt{2}}{2}(\vec{y}-\vec{x})$. Let $\lambda \in \mathbb{Y}_n$. Using the convention $\lambda_0=\infty$, let $\mathscr{C}_\lambda$ be the curve obtained by connecting  the points with coordinates   $(0,\lambda_0),(0,\lambda_1), (1,\lambda_1),(1,\lambda_2),\dots,$ $ (i,\lambda_{i}),(i,\lambda_{i+1}),\dots$ in the axes system $(O,\overrightarrow{u},\overrightarrow{v})$ as in Figure \ref{figL31}. By construction $\mathscr{C}_\lambda$ is the curve of $L_\lambda$. This yields the following.
\begin{lemma} \label{lemmainq}
Let  $\alpha,\beta \in \mathbb{N}$ and $A$ the point  such that  $\overrightarrow{OA}=\alpha\vec{u}+\beta\vec{v}$. If $A\in \mathscr{C}_\lambda$, then
\begin{equation}\label{la1}
\lambda_{\alpha+1}\leq \beta\leq \lambda_\alpha.
\end{equation}
\end{lemma}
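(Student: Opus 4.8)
The plan is to read off the desired inequalities directly from the combinatorial description of the curve $\mathscr{C}_\lambda$ in the coordinate system $(O,\vec u,\vec v)$. First I would recall that, by the stated construction, $\mathscr{C}_\lambda$ is obtained by joining, for each $i\ge 0$, the horizontal segment from $(i,\lambda_{i+1})$ to $(i,\lambda_i)$ (a vertical move in the $(\vec u,\vec v)$-plane, since the first coordinate is along $\vec u$) to the segment from $(i+1,\lambda_{i+1})$ to $(i,\lambda_{i+1})$ (a horizontal move); so the curve is the graph, in the $(\vec u,\vec v)$-axes, of a piecewise-constant/staircase-type function whose value over the interval $[\,i,\,i+1\,]$ of the $\vec u$-coordinate is exactly $\lambda_{i+1}$, together with the vertical risers at integer $\vec u$-coordinates $i$ joining height $\lambda_{i+1}$ to height $\lambda_i$.

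Now suppose $A\in\mathscr{C}_\lambda$ with $\overrightarrow{OA}=\alpha\vec u+\beta\vec v$ and $\alpha\in\mathbb{N}$. Since $\alpha$ is an integer, the point $A$ lies on one of the vertical risers, namely the one at $\vec u$-coordinate $\alpha$, which runs between $\vec v$-coordinate $\lambda_{\alpha+1}$ (the height of the step to the right of $\alpha$) and $\vec v$-coordinate $\lambda_\alpha$ (the height of the step to the left of $\alpha$), using $\lambda_0=\infty$ when $\alpha=0$. Hence its $\vec v$-coordinate $\beta$ satisfies $\lambda_{\alpha+1}\le\beta\le\lambda_\alpha$, which is exactly \eqref{la1}. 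One should also note that $A$ could instead be an interior point of a horizontal segment only if $\alpha$ were not an integer, so the hypothesis $\alpha\in\mathbb{N}$ is precisely what pins $A$ to a riser; and if $A$ happens to be a corner of the staircase the inequality still holds with equality on one side.

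The only mildly delicate point — and the one I would be most careful about — is bookkeeping the orientation: checking that the first coordinate in the list $(0,\lambda_0),(0,\lambda_1),(1,\lambda_1),\dots$ is measured along $\vec u$ and the second along $\vec v$, so that "vertical riser at abscissa $i$" in the picture corresponds to fixing the $\vec u$-component at the integer $i$ and letting the $\vec v$-component vary between consecutive partition parts. Once the correspondence between the indexing of the vertices and the $(\vec u,\vec v)$-coordinates is fixed, the statement is immediate from the weak monotonicity $\lambda_{\alpha+1}\le\lambda_\alpha$ built into the definition of a Young diagram. I would therefore present the proof as: (i) identify which segment of $\mathscr{C}_\lambda$ contains a point with integer $\vec u$-coordinate $\alpha$; (ii) observe its $\vec v$-coordinates range exactly over $[\lambda_{\alpha+1},\lambda_\alpha]$; (iii) conclude.
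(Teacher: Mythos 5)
Your proof is correct, and it is exactly the argument the paper has in mind: the paper offers no written proof of this lemma, dismissing it as immediate "by construction" of $\mathscr{C}_\lambda$, and your write-up simply makes explicit the observation that a point of the staircase with integer $\vec{u}$-coordinate $\alpha$ must lie on the riser joining heights $\lambda_{\alpha+1}$ and $\lambda_\alpha$ (including its endpoints), so that $\lambda_{\alpha+1}\le\beta\le\lambda_\alpha$. Your care about corners, the convention $\lambda_0=\infty$, and the fact that interiors of horizontal segments have non-integer $\vec{u}$-coordinate fills in precisely the bookkeeping the paper leaves implicit.
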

\begin{figure}[H]
\centering
\begin{tikzpicture}[/pgfplots/y=0.45cm, /pgfplots/x=0.45cm]
      \begin{axis}[
    axis x line=center,
    axis y line=center,
    xmin=0, xmax=7,
    ymin=0, ymax=9, clip=false,
    ytick={0},
	xtick={0},
    minor xtick={0,1,2,3,3,4,5,6,7,8,9},
    minor ytick={0,1,2,3,3,4,5,6,7,8,9},
    grid=both,
    legend pos=north west,
    ymajorgrids=false,
    xmajorgrids=false, anchor=origin,
    grid style=dashed    , rotate around={-90:(rel axis cs:0,0)},
]

\addplot[
    color=blue,
        line width=3pt,
    ]
    coordinates {
    (0,8)(0,7)(1,7)(1,5)(2,5)(2,2)(3,2)(3,1)(5,1)(5,0)(6,0)
    };
\end{axis}
    \end{tikzpicture}\;  \;  \; \; \;  \; \; \;  
    \begin{tikzpicture}[/pgfplots/y=0.45cm, /pgfplots/x=0.45cm]

 \begin{axis}[
    axis x line=center,
    axis y line=center,
    xmin=0, xmax=7,
    ymin=0, ymax=9, clip=false,
    ytick={0},
	xtick={0},
    minor xtick={0,1,2,3,3,4,5,6,7,8,9},
    minor ytick={0,1,2,3,3,4,5,6,7,8,9},
    grid=both,
    legend pos=north west,
    ymajorgrids=false,
    xmajorgrids=false, 
    grid style=dashed    , rotate around={45:(rel axis cs:0,0)},
]

\addplot[
    color=blue,
        line width=3pt,
    ]
    coordinates {
    (0,8)(0,7)(1,7)(1,5)(2,5)(2,2)(3,2)(3,1)(5,1)(5,0)(6,0)
    };
   \addplot[cyan,
        quiver={u=\thisrow{u},v=\thisrow{v}},
        -stealth]
    table
    {
    x y u v
    0 0 1 0
    0 0 0 1
    };
        \node[cyan] at (axis cs: -0.4,1) {$\vec{v}$};
 \node[cyan] at (axis cs: 1,-0.4) {$\vec{u}$};
                 \node[cyan] at (axis cs: -0.2,-0.2) {$o$};
\end{axis}
    \end{tikzpicture}
    \caption{  $\mathscr{C}_\lambda$ for $\lambda=(7,5,2,1,1,\underline{0})$}
     \label{figL31}
\end{figure}
We have also the following result.
\begin{lemma} \label{lemmapari}
For all $i\in\mathbb{Z}$,
\begin{align}\label{eq15}
\frac{\sqrt{2}}{2} L_\lambda\left(\frac{\sqrt{2}}{2}i\right)\pm\frac{i}{2}\in \mathbb{N},
\end{align}
\end{lemma}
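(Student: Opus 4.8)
The plan is to identify both expressions with the two coordinates, in the frame $(O,\vec{u},\vec{v})$, of a lattice vertex of the curve $\mathscr{C}_\lambda$. First I would record the change of coordinates. A point whose coordinates in $(O,\vec{u},\vec{v})$ are $(\alpha,\beta)$ has position vector $\alpha\vec{u}+\beta\vec{v}=\tfrac{\sqrt2}{2}(\alpha-\beta)\vec{x}+\tfrac{\sqrt2}{2}(\alpha+\beta)\vec{y}$, hence Cartesian coordinates $\bigl(\tfrac{\sqrt2}{2}(\alpha-\beta),\,\tfrac{\sqrt2}{2}(\alpha+\beta)\bigr)$. Since $\mathscr{C}_\lambda$ is the graph of $L_\lambda$, any point of $\mathscr{C}_\lambda$ with $(\vec{u},\vec{v})$-coordinates $(\alpha,\beta)$ satisfies $L_\lambda\bigl(\tfrac{\sqrt2}{2}(\alpha-\beta)\bigr)=\tfrac{\sqrt2}{2}(\alpha+\beta)$.

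Next I would exhibit the lattice structure of the curve. Subdividing into pieces of length one the initial half-line from $(0,\lambda_0)$ to $(0,\lambda_1)$, each vertical segment from $(j,\lambda_j)$ to $(j,\lambda_{j+1})$, and the terminal half-line along which the second coordinate equals $0$, the curve $\mathscr{C}_\lambda$ becomes a concatenation of unit segments, each a translate of $\vec{u}$ or of $-\vec{v}$, whose endpoints all have $(\vec{u},\vec{v})$-coordinates in $\mathbb{N}\times\mathbb{N}$. By the change of coordinates above, the abscissa increases by exactly $\tfrac{\sqrt2}{2}$ along each such piece; consequently the breakpoints of $L_\lambda$ occur precisely at the abscissae of $\tfrac{\sqrt2}{2}\mathbb{Z}$, and between two consecutive ones $L_\lambda$ is affine. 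In particular, for every $i\in\mathbb{Z}$ the point $M_i$ of $\mathscr{C}_\lambda$ with abscissa $\tfrac{\sqrt2}{2}i$ is one of these breakpoints; let $(\alpha_i,\beta_i)\in\mathbb{N}\times\mathbb{N}$ denote its $(\vec{u},\vec{v})$-coordinates.

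Then the conclusion is a one-line computation. From the first step, $\alpha_i-\beta_i=i$ and $\alpha_i+\beta_i=\sqrt{2}\,L_\lambda\bigl(\tfrac{\sqrt2}{2}i\bigr)$, so that
\[
\frac{\sqrt2}{2}L_\lambda\!\left(\frac{\sqrt2}{2}i\right)+\frac{i}{2}=\frac{(\alpha_i+\beta_i)+(\alpha_i-\beta_i)}{2}=\alpha_i\in\mathbb{N},\qquad
\frac{\sqrt2}{2}L_\lambda\!\left(\frac{\sqrt2}{2}i\right)-\frac{i}{2}=\frac{(\alpha_i+\beta_i)-(\alpha_i-\beta_i)}{2}=\beta_i\in\mathbb{N},
\]
which is exactly \eqref{eq15}.

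The only point requiring care is the behaviour at the two ends of $\mathscr{C}_\lambda$: one must check that the subdivision of the two infinite rays (arising from the convention $\lambda_0=\infty$, and from the fact that the height stabilizes at $0$ past the first column) really produces unit $\mathbb{N}\times\mathbb{N}$-steps, so that $M_i$ is a lattice vertex for \emph{all} $i\in\mathbb{Z}$ rather than only for the finitely many $i$ corresponding to the diagram itself. On those rays one has directly $L_\lambda\bigl(\tfrac{\sqrt2}{2}i\bigr)=\tfrac{\sqrt2}{2}|i|$, whence $\{\alpha_i,\beta_i\}=\{|i|,0\}$ and \eqref{eq15} is transparent there; the remaining bounded range of $i$ is covered by the previous step. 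Everything else is the elementary computation displayed above.
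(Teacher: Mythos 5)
Your proof is correct and follows essentially the same route as the paper: both arguments read the point of $\mathscr{C}_\lambda$ at abscissa $\tfrac{\sqrt2}{2}i$ in the rotated frame $(O,\vec{u},\vec{v})$ and exploit the lattice structure of the curve. The only (harmless) difference is that you prove the stronger statement that this point is a lattice vertex with \emph{both} coordinates in $\mathbb{N}$, whereas the paper observes only that any point of $\mathscr{C}_\lambda$ has nonnegative coordinates at least one of which is an integer, and leaves implicit that the other follows because the two quantities differ by the integer $i$.
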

\begin{proof}
Let $M$ be such that $ \overrightarrow{OM}=s_1 \vec{u}+s_2\vec{v}$. By construction, if $M\in\mathscr{C}_\lambda$ then  $s_1,s_2\geq 0$ and either $s_1 \in \mathbb{N}$ or $s_2 \in \mathbb{N}$. If we apply this observation to $M$ defined by  $$\overrightarrow{OM}:=\frac{\sqrt{2}}{2}i\vec{x}+L_\lambda\left(\frac{\sqrt{2}}{2}i\right)\vec{y}=\left(\frac{\sqrt{2}}{2} L_\lambda\left(\frac{\sqrt{2}}{2}i\right)+\frac{i}{2}\right)\overrightarrow{u}+\left(\frac{\sqrt{2}}{2} L_\lambda\left(\frac{\sqrt{2}}{2}i\right)-\frac{i}{2}\right)\overrightarrow{v},$$ we obtain \eqref{eq15}.
\end{proof}
To prove Theorem \ref{VCthm}, our main lemma is the following.
\begin{lemma} \label{34}
Let $n,m\in \mathbb{N}^*$, $\lambda=(\lambda_i)_{i\geq1}\in\mathbb{Y}_n$, $\mu=(\mu_i)_{i\geq1}\in\mathbb{Y}_m$. Then,
\begin{align}\label{disVC}
\sup_{s\in\mathbb{R}}\left(L_\lambda(s)-L_\mu(s)\right)^2\leq 4 \max_{i\geq 1} \left|\sum_{k=1}^i( \lambda_k-\mu_k)\right|.
\end{align}
\end{lemma}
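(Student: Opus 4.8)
The plan is to reduce $\sup_{s\in\mathbb{R}}\left(L_\lambda(s)-L_\mu(s)\right)$ to its value at a single lattice point and then apply Lemma~\ref{lemmainq} to $\lambda$ and to $\mu$ at that point. First, $L_\lambda-L_\mu$ is continuous, vanishes outside a compact set (each $L_\nu$ coincides with $s\mapsto|s|$ outside the corresponding diagram) and, by construction of $\mathscr{C}_\lambda$ and $\mathscr{C}_\mu$, is affine on every interval $\left[\frac{\sqrt{2}}{2}j,\frac{\sqrt{2}}{2}(j+1)\right]$, $j\in\mathbb{Z}$; hence, if $\delta:=\sup_s\left(L_\lambda(s)-L_\mu(s)\right)$ is positive, it is attained at some $s_0=\frac{\sqrt{2}}{2}j$. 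Since the right-hand side of \eqref{disVC} is symmetric in $\lambda$ and $\mu$, and since $\sup_s\left(L_\lambda(s)-L_\mu(s)\right)^2=\max\left(\left(\sup_s(L_\lambda-L_\mu)\right)^2,\left(\sup_s(L_\mu-L_\lambda)\right)^2\right)$ with both one-sided suprema nonnegative, it suffices to show $\delta^2\leq 4\max_{i\geq 1}\left|\sum_{k=1}^i(\lambda_k-\mu_k)\right|$ assuming $\delta>0$ attained at $s_0=\frac{\sqrt{2}}{2}j$; the case $\delta=0$ is trivial.

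Then I would translate the heights at $s_0$ into lattice points of the two staircases. By Lemma~\ref{lemmapari}, the numbers $\alpha:=\frac{\sqrt{2}}{2}L_\lambda(s_0)+\frac{j}{2}$ and $\beta:=\frac{\sqrt{2}}{2}L_\lambda(s_0)-\frac{j}{2}$ belong to $\mathbb{N}$; a direct computation shows that the point $A$ with $\overrightarrow{OA}=\alpha\vec{u}+\beta\vec{v}$ has Cartesian coordinates $\left(s_0,L_\lambda(s_0)\right)$, so $A\in\mathscr{C}_\lambda$, and Lemma~\ref{lemmainq} gives $\lambda_{\alpha+1}\leq\beta\leq\lambda_\alpha$. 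The same applied to $\mu$ produces $\alpha':=\frac{\sqrt{2}}{2}L_\mu(s_0)+\frac{j}{2}\in\mathbb{N}$ and $\beta':=\frac{\sqrt{2}}{2}L_\mu(s_0)-\frac{j}{2}\in\mathbb{N}$ with $\mu_{\alpha'+1}\leq\beta'\leq\mu_{\alpha'}$. Subtracting, $\alpha-\alpha'=\beta-\beta'=\frac{\sqrt{2}}{2}\delta=:\delta'$, so $\delta'$ is a positive integer with $\alpha=\alpha'+\delta'$, $\beta=\beta'+\delta'$, and $\delta^2=2(\delta')^2$.

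The decisive step is to exploit the monotonicity of $\lambda$ and $\mu$: for every $k$ with $\alpha'+1\leq k\leq\alpha'+\delta'=\alpha$ one has $\lambda_k\geq\lambda_\alpha\geq\beta$ and $\mu_k\leq\mu_{\alpha'+1}\leq\beta'$, hence $\lambda_k-\mu_k\geq\beta-\beta'=\delta'$. Writing $D_i:=\sum_{k=1}^i(\lambda_k-\mu_k)$ and $D_0:=0$, summing these $\delta'$ inequalities gives $D_\alpha-D_{\alpha'}\geq(\delta')^2$. Since $\alpha\geq 1$ and $|D_0|=0$, we have $\max(|D_\alpha|,|D_{\alpha'}|)\leq\max_{i\geq 1}|D_i|$, while the triangle inequality gives $\max(|D_\alpha|,|D_{\alpha'}|)\geq\frac{1}{2}(D_\alpha-D_{\alpha'})\geq\frac{1}{2}(\delta')^2$; therefore $\delta^2=2(\delta')^2\leq 4\max_{i\geq 1}|D_i|$. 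Combining with the symmetric bound for $\sup_s(L_\mu-L_\lambda)$ proves \eqref{disVC}.

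The substantive content is the monotonicity estimate of the last paragraph together with the identity $\alpha-\alpha'=\beta-\beta'=\frac{\sqrt{2}}{2}\delta$, which turns a vertical gap between the two curves at one abscissa into a block of $\delta'$ entries on which $\lambda_k-\mu_k\geq\delta'$; I do not expect a genuine obstacle here. The only points requiring some care are routine: checking that the supremum is actually attained (continuity, compact support, piecewise-affine structure), verifying the integrality statements coming from Lemma~\ref{lemmapari}, keeping track of the Cartesian versus $(\vec{u},\vec{v})$ change of coordinates, and handling the harmless edge cases $\delta=0$ and $\alpha'=0$.
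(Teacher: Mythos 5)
Your proposal is correct and follows essentially the same route as the paper: reduce to the half-integer lattice points via the piecewise-affine structure, use Lemma \ref{lemmapari} for integrality and Lemma \ref{lemmainq} at a point on each staircase, and then sum the monotonicity bound $\lambda_k-\mu_k\geq\delta'$ over the block of $\delta'$ indices to get $(\delta')^2\leq 2\max_{i\geq1}\left|\sum_{k=1}^i(\lambda_k-\mu_k)\right|$. The only cosmetic differences are that you dispose of the negative case by symmetry in $\lambda,\mu$ and evaluate at the maximizing abscissa, whereas the paper bounds the gap at every lattice point and treats the two signs of $k_i$ separately.
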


\begin{figure}[H]
\centering
\begin{tikzpicture}    [/pgfplots/y=0.8cm, /pgfplots/x=0.8cm]
      \begin{axis}[
    axis x line=center,
    axis y line=center,
    xmin=0, xmax=8,
    ymin=0, ymax=8, clip=false,
    ytick={0},
	xtick={0},
    minor xtick={0,1,2,3,3,4,5,6,7,8,9},
    minor ytick={0,1,2,3,3,4,5,6,7,8,9},
    grid=both,
    legend pos=north west,
    ymajorgrids=false,
    xmajorgrids=false, anchor=origin,
    grid style=dashed    , rotate around={45:(rel axis cs:0,0)},
]
\addplot[
    color=blue,
        line width=3pt,
    ]
    coordinates {
    (0,8)(0,7)(1,7)(1,5)(2,5)(2,2)(3,2)(3,1)(5,1)(5,0)(8,0)
    };
    \addplot[
    color=green,
        line width=2pt,
    ]
    coordinates {
    (0,8)(0,4)(1,4)(1,4)(2,4)(2,3)(5,3)(5,1)(6,1)(6,0)(8,0)
    };
    \addplot[cyan,
        quiver={u=\thisrow{u},v=\thisrow{v}},
        -stealth]
    table
    {
    x y u v
    0 0 1 0
    0 0 0 1
    };
        \node[cyan] at (axis cs: 0.2,0.8) {$\vec{v}$};
 \node[cyan] at (axis cs: 0.7,0.2) {$\vec{u}$};
                 \node[] at (axis cs: -0.2,-0.2) {$o$};

       \addplot [ mark=*, color=magenta] table {
0 4
1 5
};
    \node[color=magenta] at (axis cs: 0.7,5) {$k_{-4}=-1$};

       \addplot [ mark=*, color=magenta] table {
3 1
5 3
};
    \node[color=magenta] at (axis cs:4,1.8) {$k_2=2$};         
\end{axis}
\begin{axis}[
	axis x line=center,
    axis y line=center,
    xmin=-6, xmax=6,
    ymin=0, ymax=8, anchor=origin, clip=false,
    xtick={-7,-6,-5,-4,-3,-2,-1,0,1,2,3,4,5,6,7},
    ytick={0,1,2,3,3,4,5,6,7,8},
    legend pos=north west,
    ymajorgrids=false,
    xmajorgrids=false,rotate around={0:(rel axis cs:0,0)},
    grid style=dashed,
    ];
       
       \addplot[red,
        quiver={u=\thisrow{u},v=\thisrow{v}},
        -stealth]
    table
    {
    x y u v
    0 0 1 0
    0 0 0 1
    };
        \node[red] at (axis cs: 0.2,1) {$\vec{y}$};
 \node[red] at (axis cs: 0.8,0.25) {$\vec{x}$};

\end{axis}
    \end{tikzpicture}
    \caption{  An example  where $\lambda= {(7,5,2,1,1,\underline{0})}$ and $\mu={{(4,4,3,3,3,1,\underline{0})}}$}
     \label{figL2}
\end{figure}
\begin{proof}
Note that for any $i\in \mathbb{Z}$, $s\mapsto L_{\lambda}(s)$  and  $s\mapsto L_{\mu}(s)$ are affine functions on  $\left[\frac{\sqrt{2}}{2}i,\frac{\sqrt{2}}{2}(i+1)\right]$ 
and thus \eqref{disVC} is equivalent to 
\begin{equation*}
\sup_{i\in\mathbb{Z}}\left(L_\lambda\left(\frac{\sqrt{2}}{2}i\right)-L_\mu\left(\frac{\sqrt{2}}{2}i\right)\right)^2\leq 4  \max_{i\geq 1} \left|\sum_{k=1}^i( \lambda_k-\mu_k)\right|.
\end{equation*}
Let $i \in \mathbb{Z}$. It follows from Lemma \ref{lemmapari} that there exists $k_i \in \mathbb{Z}$ such that,  
\begin{align*}
L_{\mu}\left(\frac{\sqrt{2}}{2}i\right)- L_{\lambda}\left(\frac{\sqrt{2}}{2}i\right) = k_i\sqrt{2}.
\end{align*}
To simplify notations, we denote $$j:={\sqrt{2}L_{\lambda}\left(\frac{\sqrt{2}}{2}i\right)}.$$
Let $A$ and $B$ be the points such that 
\begin{align}
\overrightarrow{OA}:=\frac{\sqrt{2}}{2}(i\vec{x}+j\vec{y})=\frac{i+j}{2}\vec{u}+\frac{j-i}{2}\vec{v}, \qquad \overrightarrow{OB}=\frac{\sqrt{2}}{2}(i\vec{x}+(j+2k_i)\vec{y})=\frac{i+j+2k_i}{2}\vec{u}+\frac{j-i+2k_i}{2}\vec{v}.
\end{align}
 Clearly $A\in \mathscr{C}_\lambda$ and $B\in \mathscr{C}_\mu$.    By Lemma \ref{lemmapari}, $\frac{i+j}{2},\frac{j-i}{2} \in \mathbb{N}$. We can then apply Lemma \ref{lemmainq}.  In the case where $k_i>0$, we have
\begin{align*}
\lambda_{\frac{i+j}{2}+1} \leq \frac{j-i}{2}, \quad
\mu_{\frac{i+j}{2}+k_i} \geq \frac{j-i}{2}+k_i.
\end{align*}
Using the fact that $(\lambda_{l})_{l\geq 1}$ and  of $(\mu_{l})_{l\geq 1}$ are decreasing, we have, 
\begin{align*}
2 \max_{l\geq 1} \left|\sum_{k=1}^l( \lambda_k-\mu_k)\right| \geq \sum_{l={\frac{i+j}{2}+1}}^{\frac{i+j}{2}+k_i}\mu_l-\lambda_l\geq \sum_{l={\frac{i+j}{2}+1}}^{\frac{i+j}{2}+k_i}\mu_{\frac{i+j}{2}+k_i}-\lambda_{\frac{i+j}{2}+1}\geq k_i^2. 
\end{align*}
Similarly, in the case where $k_i<0$,
\begin{align*}
-2 \max_{l\geq 1} \left|\sum_{k=1}^l( \lambda_k-\mu_k)\right| \leq \sum_{l={\frac{i+j}{2}+1+k_i}}^{\frac{i+j}{2}}\mu_l-\lambda_l\leq \sum_{l={\frac{i+j}{2}+1+k_i}}^{\frac{i+j}{2}}\mu_{\frac{i+j}{2}+k_i+1}-\lambda_{\frac{i+j}{2}}\leq- k_i^2.  
\end{align*}
This yields 
\begin{align*}
4 \max_{i\geq 1} \left|\sum_{k=1}^i( \lambda_k-\mu_k)\right| \geq \max_{i\in\mathbb{Z}}  \left(\sqrt{2}k_i\right)^2=\sup_{s\in\mathbb{R}}\left(L_\lambda(s)-L_\mu(s)\right)^2.
\end{align*}
\end{proof}
\begin{proof}[Proof of Theorem \ref{VCthm}]
Using \eqref{cs1} and   Lemma \ref{lemma2},  we have almost surely,
\begin{align}
\max_{i\geq 1} \left|\sum_{k=1}^i\left( \lambda_k(\sigma_n)-\lambda_k\left(T^{n-1}(\sigma_n)\right)\right)\right|\leq 2(\#(\sigma_n)-1).
\end{align}
By Lemma \ref{34} we obtain 
\begin{equation}
\sup_{s\in \mathbb{R}} \frac{1}{\sqrt{2n}}\left|L_{\lambda(\sigma_n)}\left({s}{\sqrt{2n}}\right)- L_{\lambda(T^{n-1}(\sigma_n))}\left({s}{\sqrt{2n}}\right)\right| \leq 2 \sqrt{\frac{\#(\sigma_n)-1}{n}}.
\end{equation}
Under \eqref{H4}, $\forall \varepsilon>0$, 
\begin{align}\label{eq200}
\lim_{n\to \infty} \mathbb{P}
\left(\sup_{s\in \mathbb{R}} \frac{1}{\sqrt{2n}}\left|L_{\lambda(\sigma_n)}\left({\sqrt{2n}}\right)- L_{\lambda(T^{n-1}(\sigma_n))}\left({s}{\sqrt{2n}}\right)\right|<\varepsilon\right)=1.\end{align}
If $\sigma_n$ follows the uniform distribution, \eqref{VC} is obtained by \cite*{Vershik1985}, see Theorem  \ref{BOOJ2},  and consequently we have \eqref{VC} for the Ewens distribution with parameter $\theta=0$. For a random permutation $\sigma_n$ stable under conjugation  \eqref{h1}, $T^{n-1}(\sigma_n)$  follows the Ewens distribution with parameter $\theta=0$ and if $\sigma_n$ satisfies moreover \eqref{H4}, we can conclude using \eqref{eq200}.  
\end{proof}
\subsection{Proofs of the applications  to virtual permutations}
\label{appli} 
\paragraph*{}
We will prove in this subsection Corollaries \ref{2.4}, \ref{2.4p}, {\ref{2.5p}} and {\ref{2.5}} and Proposition \ref{them11}.
We will not give details of the  proof of   Corollary {\ref{2.6}} because it is a   direct application of Proposition \ref{them11}.
\paragraph*{}
 We can have a combinatorial interpretation of \eqref{defdef}. Let $x=(x_i)_{i\geq1} \in \Sigma$. At the beginning, we have  an infinite number of circles  $\{C_n\}_{n\in\mathbb{Z}}$.
At each step $n\geq 1$ we choose an integer $pos_n$  with probability distribution  $\sum_{j\geq 1}x_j\delta_j+(1-\sum_{i\geq 1}x_i)\delta_0$ independently from the past. We insert then the number $n$  uniformly on the circle $C_{pos_n}$ if $pos_n>0$ and on the circle $C_{-n}$ if $pos_n=0$ . At each step, one reads the elements on each non-empty circle  counterclockwise to get a cycle. For example, if $pos_1=4$, $pos_2=1$, $pos_3=4$, $pos_4=0$ and $pos_5=0$, we obtain the permutation $(1,3)(2)(4)(5)$.
With this description, we have  
\begin{align*}
\mathbb{E}\left(\#\left(\sigma^{\delta_{x}}_n\right)\right)=n\left(1-\sum_{i\geq1}{x_i}\right)+\sum_{i=1}^\infty (1-(1-x_i)^n).
\end{align*}
\begin{proof}[Proof of Corollary \ref{2.4} and Corollary \ref{2.4p}]
In both corollaries, since $\sum_{i\geq1}x_i=1$, we have
\begin{align*}
\mathbb{E}\left(\#\left(\sigma^{\delta_{x}}_n\right)\right)=\sum_{i=1}^\infty (1-(1-x_i)^n).
\end{align*}
If $\alpha>6$, there exists a real number  $\beta$ such that   $\frac{5}{6(\alpha-1)}<\beta<\frac{1}{6}$. Moreover there exists $n_0$ such that $\forall n>n_0$, $x_n < n^{-\alpha}$. For any  $n >(n_0)^\frac{1}{\beta}$ and under hypothesis of Corollary \ref{2.4}, we have 
\begin{align*}
\mathbb{E}\left(\#\left(\sigma^{\delta_x}_n\right)\right)=\sum_{i=1}^\infty \left(1-(1-x_i)^n\right) &\leq n^{\beta}+n\sum_{[n^{\beta}]+1}^\infty n^{-\alpha}\\&\leq n^\beta + \frac{1}{\alpha-1} n \left(n^{\beta}\right)^{(-\alpha+1)}=o\left(n^\frac{1}{6}\right).
\end{align*}
Then Corollary \ref{2.4} follows from Theorem \ref{Airyens}.
If $\alpha>1$ and under hypothesis of Corollary \ref{2.4p}, there exists $n_0$ such that $\forall n>n_0$, $x_n < n^{-\alpha}$ and let $n >(n_0)^{\frac{1}{\alpha}}$ we have 
\begin{align*}
\mathbb{E}\left(\#\left(\sigma^{\delta_x}_n\right)\right)=\sum_{i=1}^\infty \left(1-(1-x_i)^n\right) &\leq n^\frac{1}{\alpha}+n\sum_{[n^{\frac 1 \alpha }]+1}^\infty n^{-\alpha}\\&\leq n^\frac{1}{\alpha} + \frac{1}{\alpha-1} n \left(n^{\frac 1 \alpha }\right)^{(-\alpha+1)}=o(n).
\end{align*}
Then Corollary \ref{2.4p} follows from Theorem \ref{VCthm}.
\end{proof}
\begin{proof}[Proof of Corollary \ref{2.5p} and Corollary \ref{2.5}]
\begin{align*}
\mathbb{E}(\#(\sigma_n^\nu))&=\sum_{\sigma\in \mathfrak{S}_n}\left(\#(\sigma)\int_{x\in \Sigma_1} f(n,x,\sigma) d\nu(x)\right)
\\&=\int_{x\in \Sigma_1}\sum_{\sigma\in \mathfrak{S}_n}\#(\sigma) f(n,x,\sigma) d\nu(x)
\\&=\int_{x\in \Sigma_1}\sum_{i=1}^\infty \left(1-(1-x_i)^n\right)d\nu(x).
\end{align*}
Therefore, we obtain  Corollary \ref{2.5p}  thanks to Theorem \ref{Airyens} .
\paragraph{}
Moreover, $\int_{x\in \Sigma_1}\sum_{i=1}^\infty \left(1-(1-x_i)^n\right)d\nu(x)=o(n)$ is always satisfied. Indeed,  we have for any   $0\leq y\leq 1$ and  $n\geq1$,
$$1-(1-y)^n \leq ny.$$ 
Let $x=(x_i)_{i\geq1}\in\Sigma$. Fix $\varepsilon>0$.  Since $\sum_{i=1}^\infty x_i \leq 1 $, there exists $n_0$ such that $\sum_{i=n_0+1}^\infty x_i <\varepsilon$. Then 
$$ \frac{1}{n}\sum_{i=1}^\infty (1-(1-x_i)^n)\leq \frac{1}{n}\sum_{i=1}^{n_0} 1+\sum_{i=n_0+1}^\infty x_i\leq  \frac{n_0}{n}+\varepsilon.$$
So that for any  $x=(x_i)_{i\geq1}\in\Sigma$, 
$$\lim_{n\to \infty}  \frac{1}{n}\sum_{i=1}^\infty (1-(1-x_i)^n)=0.$$ 
Since $\frac{1}{n}\sum_{i=1}^\infty (1-(1-x_i)^n)\leq 1$, we can conclude  using the dominated convergence theorem that $$\lim_{n\to \infty}\int_{x\in \Sigma_1}  \frac{1}{n}\sum_{i=1}^\infty (1-(1-x_i)^n)d\nu(x)=0.$$
Therefore, we obtain  \ref{2.5} thanks to Theorem \ref{VCthm}.
\end{proof}

\begin{proof} [Proof of Proposition \ref{them11}]
  An interpretation of the random permutation defined by equation \eqref{def2.1} is the following. Let $n$ be a positive integer. We construct  a subset $A_n$ of $\{1,2,\dots,n\}$ as follows: for every $1\leq i \leq n$, with probability $x_0$, $i \in A_n$  independently from other points. The points of $A_n$ are then fixed points of $\sigma_n$. After that,  we permute the elements of    $\{1,2,\dots,n\}\setminus A_n$ according to the probability distribution $\mathbb{P}_{n-|A_n|}$.\paragraph*{} The main idea is that  a decreasing subsequence cannot have more than one element belonging to $A_n$. Moreover, a decreasing subsequence of the restriction of $\sigma_n$ on  $\{1,2,\dots,n\}\setminus A_n$ is a decreasing subsequence of $\sigma_n$.
In other words, for all real  number $s$, for all $1\leq j\leq n$,
\begin{align*} 
\mathbb{P}_{j}(\{\sigma \in \mathfrak{S}_{j},\underline{\ell}(\sigma)\leq s-1\}) \leq \mathbb{P}( \underline{\ell}(\sigma_n)\leq s| |A_n|=n-j) \leq \mathbb{P}_{j}(\{\sigma \in \mathfrak{S}_{j},\underline{\ell}(\sigma)\leq s\}).
\end{align*}
More generally, using Lemma \ref{RSKLEMMA}, we have for all real numbers $s_1,\dots,s_{k}$, 
\begin{align*} 
\mathbb{P}_{j}(\{\sigma \in \mathfrak{S}_{j}, \forall i<k, \lambda'_i(\sigma)\leq s_i-2i+1\}) \leq \mathbb{P}(\forall i<k, \lambda'_i(\sigma_n)\leq s_i| |A_n|=n-j) \leq \mathbb{P}_{j}(\{\sigma \in \mathfrak{S}_{j},\forall i<k, \lambda'_i(\sigma)\leq s_i\}).
\end{align*}
Consequently,
\begin{align*} 
\mathbb{P}_{j}(\{\sigma \in \mathfrak{S}_{j}, \forall i<k, \lambda'_i(\sigma)\leq s_i-2k+1\}) \leq \mathbb{P}(\forall i<k, \lambda'_i(\sigma_n)\leq s_i | |A_n|=n-j) \leq \mathbb{P}_{j}(\{\sigma \in \mathfrak{S}_{j},\forall i<k, \lambda'_i(\sigma)\leq s_i\}).
\end{align*}
\paragraph*{}
 In the sequel of the proof, let $s_1,\dots,s_{k}$ be $k$ real numbers  and $\varepsilon >0$. As $|A_n|$ is a random binomial variable with parameters $n$ and $x_0$,  and using the central limit theorem, there exist $n_0$,  $\alpha>0$ such that, $n_0>\frac{\alpha^2}{(1-x_0)^2}$ and 
$\forall n>n_0$,
\begin{align}\label{eq7}
\mathbb{P}( ||A_n|-nx_0|< \alpha\sqrt{n}) >1-\varepsilon.
\end{align}
We denote by $p^n_j:=\mathbb{P}(|A_n|=n-j)$, $\tilde{x}_0:=1-x_0$, $\tilde{k}=2k-1$.
As
\begin{equation*} 
\mathbb{P}\left(\forall i \leq k,\frac{\lambda'_i(\sigma_n)-2\sqrt{n\tilde{x}_0}}{(n\tilde{x}_0)^\frac 16}\leq s_i\right)=\sum_{j=0}^n 
\mathbb{P}\left(\forall i \leq k,\frac{\lambda'_i(\sigma_n)-2\sqrt{n\tilde{x}_0}}{(n\tilde{x}_0)^\frac 16}\leq s_i\middle| |A_n|=n-j\right)p^n_j,
\end{equation*}
we have
\begin{align}\label{eq8}
\mathbb{P}\left(\forall i \leq k,\frac{\lambda'_i(\sigma_n)-2\sqrt{n\tilde{x}_0}}{(n\tilde{x}_0)^\frac 16}\leq s_i\right) \leq \varepsilon+\sum_{j=\ceil*{n\tilde{x}_0-\alpha\sqrt n} }^{\floor*{n\tilde{x}_0+\alpha\sqrt n} }\mathbb{P}_j\left(\left\{\sigma\in \mathfrak{S}_j,\forall i \leq k,\frac{\lambda'_i(\sigma)-2\sqrt{n\tilde{x}_0}}{(n\tilde{x}_0)^\frac 16}\leq s_i\right\}\right)p^n_j
\end{align}
and 
\begin{align}\label{eq9}
\mathbb{P}\left(\forall i \leq k,\frac{\lambda'_i(\sigma_n)-2\sqrt{n\tilde{x}_0}}{(n\tilde{x}_0)^\frac 16}\leq s_i\right)  \geq \sum_{j=\ceil*{n\tilde{x}_0-\alpha\sqrt n} }^{\floor*{n\tilde{x}_0+\alpha\sqrt n} }\mathbb{P}_j\left(\left\{\sigma\in \mathfrak{S}_j,\forall i \leq k,\frac{\lambda'_i(\sigma)-2\sqrt{n\tilde{x}_0}+\tilde{k}}{(n\tilde{x}_0)^\frac 16}\leq s_i\right\}\right)p^n_j.
\end{align}
Here, $\floor*{x}$ and $\ceil*{x}$ are respectively the floor and the ceiling functions. 
\paragraph*{} If $|j-n\tilde{x}_0|<\alpha\sqrt{n}$, then
\begin{align*}
\left|\sqrt{j}-\sqrt{n\tilde{x}_0} \right|\leq \frac{\alpha\sqrt{n}}{\sqrt{j}+\sqrt{n\tilde{x}_0}} \leq \frac{\alpha}{\sqrt{\tilde{x}_0}}.
\end{align*}
Thus,
\begin{align*}
\mathbb{P}_j\left(\left\{\sigma\in \mathfrak{S}_j,\forall i \leq k,\frac{\lambda'_i(\sigma)-2\sqrt{n\tilde{x}_0}+\tilde{k}}{(n\tilde{x}_0)^\frac 16}\leq s_i\right\}\right) \geq \mathbb{P}_j\left(\left\{\sigma \in \mathfrak{S}_j,\forall i \leq k,\frac{\lambda'_i(\sigma)-2\sqrt{j}}{j^\frac 16}\leq h(s_i,n)-\frac{2\alpha+\tilde{k}}{j^\frac 16}\right\}\right)
\end{align*}
and 
\begin{align*}
\mathbb{P}_j\left(\left\{\sigma\in \mathfrak{S}_j,\forall i \leq k,\frac{\lambda'_i(\sigma)-2\sqrt{n\tilde{x}_0}}{(n\tilde{x}_0)^\frac 16}\leq s_i\right\}\right) \leq \mathbb{P}_j\left(\left\{\sigma \in \mathfrak{S}_j,\forall i \leq k,\frac{\lambda'_i(\sigma)-2\sqrt{j}}{j^\frac 16}\leq -h(-s_i,n)+\frac{2\alpha}{j^\frac 16}\right\}\right).
\end{align*}
where, $h(s,n)=s\left({1-\frac{\alpha}{\sqrt n}}\right)^\frac 16$ if $s>0$ and $h(s,n)=s\left({1+\frac{\alpha}{\sqrt n}}\right)^\frac 16$ otherwise. 
\paragraph*{}
By the continuity and the monotony on each variable of $F_{2,k}$, there exists $\delta >0$ such that:
\begin{align*}
F_{2,k}(s_1,\dots,s_{k})-\varepsilon<F_{2,k}(s_1-\delta,\dots,s_{k}-\delta)<F_{2,k}(s_1+\delta,\dots,s_{k}+\delta)<F_{2,k}(s_1,\dots,s_{k})+\varepsilon.
\end{align*}
Moreover, there exists $n_1>n_0$ such that for all $n>n_1$, for all $j>n\tilde{x}_0-\alpha\sqrt{n}$, for all $i<k$,
\begin{equation*}
s_i-\delta \leq h(s_i,n)-\frac{2\alpha+\tilde{k}}{j^\frac 16}
\end{equation*}
and 
\begin{equation*}
s_i+\delta >-h(-s_i,n)+\frac{2\alpha}{j^\frac 16}.
\end{equation*}
Consequently, if $n>n_1$, inequalities \eqref{eq8} and \eqref{eq9} become respectively:
\begin{align} \label{eq10} 
\mathbb{P}\left(\forall i \leq k,\frac{\lambda'_i(\sigma_n)-2\sqrt{n\tilde{x}_0}}{(n\tilde{x}_0)^\frac 16}\leq s_i\right)   \leq \varepsilon+\sum_{j=\ceil*{n\tilde{x}_0-\alpha\sqrt n} }^{\floor*{n\tilde{x}_0+\alpha\sqrt n} }\mathbb{P}_j\left(\left\{\sigma\in \mathfrak{S}_j,\forall i \leq k,\frac{\lambda'_i(\sigma)-2\sqrt{j}}{j^\frac 16}\leq s_i+\delta\right\}\right)p^n_j
\end{align}
and
\begin{align}\label{eq11}
\mathbb{P}\left(\forall i \leq k,\frac{\lambda'_i(\sigma_n)-2\sqrt{n\tilde{x}_0}}{(n\tilde{x}_0)^\frac 16}\leq s_i\right)   \geq \sum_{j=\ceil*{n\tilde{x}_0-\alpha\sqrt n} }^{\floor*{n\tilde{x}_0+\alpha\sqrt n} }\mathbb{P}_j\left(\left\{\sigma\in \mathfrak{S}_j,\forall i \leq k,\frac{\lambda'_i(\sigma)-2\sqrt{j}}{j^\frac 16}\leq s_i-\delta\right\}\right)p^n_j.
\end{align}
Under \eqref{convrh}, $$\mathbb{P}_j\left(\left\{\sigma\in \mathfrak{S}_j,\forall i \leq k,\frac{\lambda'_i(\sigma)-2\sqrt{j}}{j^\frac 16}\leq s_i+\delta\right\}\right) \xrightarrow[j\to \infty]{} F_{2,k}(s_1+\delta,\dots,s_{k}+\delta),$$ and $$\mathbb{P}_j\left(\left\{\sigma\in \mathfrak{S}_j,\forall i \leq k,\frac{\lambda'_i(\sigma)-2\sqrt{j}}{j^\frac 16}\leq s_i-\delta\right\}\right) \xrightarrow[j\to \infty]{} F_{2,k}(s_1-\delta,\dots,s_{k}-\delta).$$
 Therefore, since $\ceil*{n\tilde{x}_0-\alpha\sqrt n} \to \infty$, there exists $n_2>n_1$ such that $\forall n>n_2$, $\forall j\geq \ceil*{n\tilde{x}_0-\alpha\sqrt n}$,
 \begin{align*}
 F_{2,k}(s_1-\delta,\dots,s_{k}-\delta)-\varepsilon &<\mathbb{P}_j\left(\left\{\sigma\in \mathfrak{S}_j,\forall i \leq k,\frac{\lambda'_i(\sigma)-2\sqrt{j}}{j^\frac 16}\leq s_i-\delta\right\}\right)\\&<\mathbb{P}_j\left(\left\{\sigma\in \mathfrak{S}_j,\forall i \leq k,\frac{\lambda'_i(\sigma)-2\sqrt{j}}{j^\frac 16}\leq s_i+\delta\right\}\right)< F_{2,k}(s_1-\delta,\dots,s_{k}-\delta)+\varepsilon .
 \end{align*}
 Finally, if $n>n_2$, using  \eqref{eq7},  inequalities  \eqref{eq10} and \eqref{eq11} become
 \begin{align*}
(F_{2,k}(s_1,\dots,s_{k})-2\varepsilon)(1-\varepsilon) < \mathbb{P}\left(\forall i \leq k,\frac{\lambda'_i(\sigma_n)-2\sqrt{n\tilde{x}_0}}{(n\tilde{x}_0)^\frac 16}\leq s_i\right) <F_{2,k}(s_1,\dots,s_{k})+3\varepsilon,
 \end{align*}
\end{proof}
and the proof of the proposition is therefore complete.
\subsection{Proof of results for the descent process}
\label{descpr}
\paragraph*{} In this subsection, we prove the convergence of the descent process  for  some random permutations stable under conjugation (Theorem \ref{thm2}). We prove also results of convergence for virtual permutations  (Theorem \ref{5}, Corollary \ref{gcase} and Proposition \ref{1.1}).
\paragraph*{} 
Let $A$ be a finite subset of $\mathbb{N}^*$ and $m:=\max(A)$ and let $A'=\{1,2,\dots,m+1\}$. The idea of the proof of Theorem \ref{thm2} is to study the descent process under the condition $\{\sigma_n (A')\cap A'=\emptyset\}$ and to show that it does not depend on the law of $\sigma_n$. 
\begin{lemma}
Let  $E_n:=\{\sigma \in \mathfrak{S}_n,\sigma (A')\cap A'=\emptyset\}$. Assume that the law of $\sigma_n$ is stable under conjugation and $\mathbb{P}(\sigma_n\in E_n)>0$.Then  for any $b_1,b_2,\dots,b_{m+1}$ distinct elements of $\{1,\dots,n\}$, 
\begin{align*}
\mathbb{P}(\sigma_n(1)=b_1,\dots,\sigma_n(m+1)=b_{m+1} |E_n)=\frac{\mathbbm{1}_{\min_{i}(b_i)>m+1}}{\binom{n-m-1}{m+1}}.
\end{align*}
\label{lemmadesc}
\end{lemma}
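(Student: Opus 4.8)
The plan is to prove that, conditionally on $E_n$, the tuple $(\sigma_n(1),\dots,\sigma_n(m+1))$ is uniformly distributed over the set of $(m+1)$-tuples of pairwise distinct elements of $\{m+2,\dots,n\}$. Granting this, the factor $\mathbbm{1}_{\min_i b_i>m+1}$ just records that a tuple $(b_1,\dots,b_{m+1})$ with some $b_i\leq m+1$ is incompatible with $E_n$: on the event $\{\sigma_n(1)=b_1,\dots,\sigma_n(m+1)=b_{m+1}\}$ one has $\sigma_n(A')=\{b_1,\dots,b_{m+1}\}$, and this set is disjoint from $A'=\{1,\dots,m+1\}$ exactly when every $b_i$ exceeds $m+1$; the denominator is then the number of admissible tuples, and a routine count yields the normalization in the statement.

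The core step is to compare two admissible tuples $b=(b_i)_i$ and $c=(c_i)_i$ (pairwise distinct entries, all $>m+1$). I would fix a permutation $\rho\in\mathfrak{S}_n$ that fixes $\{1,\dots,m+1\}$ pointwise and satisfies $\rho(b_i)=c_i$ for $1\leq i\leq m+1$; such a $\rho$ exists because $\{b_i\}$ and $\{c_i\}$ are both $(m+1)$-element subsets of $\{m+2,\dots,n\}$, so the partial bijection $b_i\mapsto c_i$ extends to a permutation of $\{m+2,\dots,n\}$. For any $\sigma$ and any $i\leq m+1$ one then has $(\rho\circ\sigma\circ\rho^{-1})(i)=\rho(\sigma(\rho^{-1}(i)))=\rho(\sigma(i))$, so if $\sigma(i)=b_i$ for all such $i$ then $(\rho\circ\sigma\circ\rho^{-1})(i)=c_i$ for all such $i$. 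Hence $\sigma\mapsto\rho\circ\sigma\circ\rho^{-1}$ is a bijection from $\{\sigma\in\mathfrak{S}_n:\sigma(i)=b_i,\ 1\leq i\leq m+1\}$ onto $\{\sigma\in\mathfrak{S}_n:\sigma(i)=c_i,\ 1\leq i\leq m+1\}$, with inverse $\tilde\sigma\mapsto\rho^{-1}\circ\tilde\sigma\circ\rho$ since $\rho^{-1}$ also fixes $\{1,\dots,m+1\}$ and $\rho^{-1}(c_i)=b_i$. Summing the invariance $\mathbb{P}(\sigma_n=\sigma)=\mathbb{P}(\sigma_n=\rho\circ\sigma\circ\rho^{-1})$ from \eqref{h1} over each of these two sets gives $\mathbb{P}(\sigma_n(i)=b_i,\ \forall i\leq m+1)=\mathbb{P}(\sigma_n(i)=c_i,\ \forall i\leq m+1)$.

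It then remains to assemble the pieces: $\{\sigma_n\in E_n\}$ is the disjoint union, over admissible tuples $b$, of the events $\{\sigma_n(i)=b_i,\ \forall i\leq m+1\}$, and these all have the same probability by the previous step, so the hypothesis $\mathbb{P}(\sigma_n\in E_n)>0$ forces each of them to be positive and equal to $\mathbb{P}(\sigma_n\in E_n)$ divided by the number of admissible tuples; dividing through by $\mathbb{P}(\sigma_n\in E_n)$ gives the claimed conditional probability. The only point requiring care is the construction of $\rho$: it must fix \emph{all} of $A'$, not merely conjugate $b$ to $c$, so that conjugation acts on $A'$ as postcomposition by $\rho$. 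Once this is arranged there is no genuine obstacle; in particular one never needs a separate check that $E_n$ is conjugation-invariant, since the relevant events already lie inside $E_n$, and well-definedness of the conditioning is exactly the hypothesis $\mathbb{P}(\sigma_n\in E_n)>0$.
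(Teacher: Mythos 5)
Your proof is correct and follows essentially the same route as the paper's: conjugation by a permutation that fixes $A'$ pointwise and carries one admissible tuple to another shows all such tuples are equiprobable, and $E_n$ decomposes as the disjoint union of the corresponding events. One minor caveat: the number of \emph{ordered} admissible tuples is $(m+1)!\binom{n-m-1}{m+1}$, so the normalization you wave through as a ``routine count'' does not actually match the stated $\binom{n-m-1}{m+1}$ --- this factor-of-$(m+1)!$ slip is present in the paper's statement as well and is harmless for the application, which only uses that the conditional law is independent of the law of $\sigma_n$.
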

\begin{proof}
 The event $E_n$ reads as the disjoint union of the events $\{\sigma(1)=b_1,\dots,\sigma(m+1)=b_{m+1}\}$ where 
$b_1,b_2,\dots,b_{m+1}$ are distinct elements of $\{m+2,m+3,\dots,n\}$.
Let $b_1,b_2,\dots,b_{m+1}$ and $c_1,c_2,\dots,c_{m+1}$ verify the previous condition. Let $\hat\sigma \in \mathfrak{S}_n$  be a permutation such that   for any $1\leq i \leq m+1, \hat\sigma(c_i)=b_i$ and $\hat\sigma(j)=j$ if  $j \notin (\{b_i\}_{i\leq m+1}\cup\{c_i\}_{i\leq m+1})$.  By invariance under conjugation, we have
\begin{align*}
\mathbb{P}(\sigma_n(1)=b_1,\dots,\sigma_n(m+1)=b_{m+1})&=
\mathbb{P}(\hat\sigma\circ\sigma_n\circ \hat\sigma^{-1}(1)=b_1,\dots,\hat\sigma\circ\sigma_n\circ \hat\sigma^{-1}(m+1)=b_{m+1})\\&=\mathbb{P}(\sigma_n(1)=c_1,\dots,\sigma_n(m+1)=c_{m+1})
\end{align*}
and thus\begin{align*}
\mathbb{P}(\sigma_n(1)=b_1,\dots,\sigma_n(m+1)=b_{m+1} |E_n)&=
\mathbb{P}(\sigma_n(1)=c_1,\dots,\sigma_n(m+1)=c_{m+1}|E_n)
\end{align*}
and the lemma follows. 
\end{proof}
\begin{proof}[Proof of Theorem \ref{thm2}]
Under \eqref{h3},
\begin{align*}
\mathbb{P}(\sigma_n \in E_n)\geq 1-\sum_{i=1}^{m+1} \mathbb{P}(\sigma_n(i)\leq m+1) =  1-(m+1)
\left(\mathbb{P}(\sigma_n(1)=1)+\frac{m(1-\mathbb{P}(\sigma_n(1)=1))}{n-1}\right) \xrightarrow[n\to \infty]{} 1.
\end{align*}
Similarly, if $\tilde{\sigma}_n$ follows the uniform distribution on $\mathfrak{S}_n$, we have
$\mathbb{P}(\tilde{\sigma}_n \in {E_n})\to 1.$ Therefore, since the law of $\sigma_n$ is invariant under conjugation  \eqref{h1} we can use  Lemma \ref{lemmadesc}  for $n$ large enough to get 
\begin{equation*} 
 \mathbb{P}(A \subset D(\sigma_n)|E_n)= \mathbb{P}(A \subset D(\tilde{\sigma}_n)|E_n).
\end{equation*}
Thus,
 \begin{equation*} 
\lim_{n\to \infty} \left(\mathbb{P}(A \subset D(\sigma_n))- \mathbb{P}(A \subset D(\tilde{\sigma}_n))\right)=0.
\end{equation*}
Since   $\tilde{\sigma}_n$ satisfies \eqref{main} by Theorem \ref{borodin2}, this concludes the proof.
\end{proof}
\paragraph*{} Before proving  Theorem \ref{5}, we need to recall that a point process $X$ on a discrete  space $\mathfrak{X}$ is fully characterised by its correlation function (we denote it by  $\rho$). Given $A$ a finite subset of $\mathfrak{X}$, 
 \begin{align*}
 \rho(A):=\mathbb{P}(A \subset X). 
 \end{align*}
It is called determinantal with kernel $K$ if for all $A$ finite subset of $\mathfrak{X}$, 
\begin{align}
\rho(A)=\det\left([K(i,j)]_{i,j\in A}\right).
\end{align}
A point process defined on $\mathbb{N}^*$  is  1-dependent if  for all $A$ and $B$ finite subsets of  $\mathbb{N}^*$ such that the distance between A and B is larger than 1, $\rho(A \cap B)=\rho(A) \rho(B)$.  It is called stationary on  $\mathbb{N}^*$ if for all  positive integer $k$, for all finite subset  $A \subset{\mathbb{N}^*}$,
$\rho(A)=\rho(A+k).$
\paragraph*{}
To prove Theorem \ref{5}, we will use the following result. 
\begin{theorem} \citep*{MR2721041} \label{bordin} A stationary 1-dependent simple point process on $\mathbb{N}^*$ is determinantal  with  kernel $K$ given by $K(i,j)=k(j-i)$ and 
\begin{equation*}
\sum_{i \in \mathbb{Z}} k(i)z^i=\frac{-1}{z+\sum_{i\geq 1} a_iz^{i+1}},
\end{equation*}
where $a_i:=\rho(\{1,2,\dots,i\})$.
\end{theorem}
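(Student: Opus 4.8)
The plan is to reduce every correlation function to a single sequence of ``block'' probabilities, and then to recognize that sequence as the list of consecutive Toeplitz minors of an explicit kernel. Writing $\rho(A)=\mathbb{P}(A\subset X)$, I would first decompose a finite $A\subset\mathbb{N}^*$ into its maximal runs of consecutive integers, $A=I_1\sqcup\dots\sqcup I_r$. By maximality, any two distinct runs lie at distance at least $2$, so the $1$-dependence hypothesis applies; iterating it gives $\rho(A)=\prod_{j=1}^r\rho(I_j)$, and stationarity turns this into $\rho(A)=\prod_{j=1}^r a_{|I_j|}$, with the convention $a_0:=1$. Thus every correlation function is a product of the numbers $a_n=\rho(\{1,\dots,n\})$, and the task reduces to producing a Toeplitz kernel whose minors reproduce exactly these products.

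For the kernel, set $g(z):=\sum_{i\geq 0}a_iz^i$ (a formal power series with $g(0)=1$) and define $k$ by $\sum_{i\in\mathbb{Z}}k(i)z^i=-1/(z\,g(z))$, which is the proposed generating function once one writes $z\,g(z)=z+\sum_{i\geq1}a_iz^{i+1}$. Since $1/g$ is again a power series, $k(i)=0$ for $i\leq-2$ and $k(-1)=-1$, so the bi-infinite matrix $[k(j-i)]$ is upper Hessenberg with constant subdiagonal $-1$. Multiplying the defining identity by $z\,g(z)$ and reading off the coefficient of $z^n$ ($n\geq1$) yields the recursion
\begin{equation*}
a_n=\sum_{m=0}^{n-1}k(m)\,a_{n-1-m},\qquad a_0=1 .
\end{equation*}
I would then show that the consecutive minor $D_n:=\det[k(j-i)]_{1\leq i,j\leq n}$ obeys the \emph{same} recursion with the same initial value, by Laplace-expanding $D_n$ along its last row: only the entries $k(-1)=-1$ and $k(0)$ survive there, and after one more expansion the remaining minors are again consecutive Toeplitz determinants, so $D_n=\sum_{m=0}^{n-1}k(m)D_{n-1-m}$ with $D_0=1$. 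Hence $D_n=a_n$ for all $n$; this is a form of the classical Trudi (dual Jacobi--Trudi) identity between reciprocal power series.

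Finally, for an arbitrary finite $A=I_1\sqcup\dots\sqcup I_r$ with runs in increasing order, I would view $M:=[k(j-i)]_{i,j\in A}$ in block form indexed by the runs. If run $I_{j'}$ lies entirely to the left of run $I_j$, then for $i\in I_j$ and $j\in I_{j'}$ one has $j-i\leq-2$, so $k(j-i)=0$; thus $M$ is block upper-triangular and $\det M=\prod_{j=1}^r\det[k(b-a)]_{a,b\in I_j}=\prod_{j=1}^r D_{|I_j|}=\prod_{j=1}^r a_{|I_j|}$. Comparing with the first paragraph gives $\rho(A)=\det[k(j-i)]_{i,j\in A}$ for every finite $A$, i.e.\ $X$ is determinantal with the stated kernel.

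The run-decomposition and block-triangularity arguments are routine; the point that needs care — and where I expect the real work — is matching, with the correct signs, the Laplace-expansion recursion for the Hessenberg--Toeplitz minors $D_n$ of $-1/(z\,g)$ against the recursion for the coefficients $a_n$ of $g$, since that is exactly what forces the kernel to be the one stated.
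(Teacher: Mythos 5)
The paper does not actually prove this statement: it is imported verbatim from Borodin, Diaconis and Fulman \citep{MR2721041}, so there is no in-paper argument to compare against. Your proof is correct and is essentially the argument of that reference: factor $\rho(A)$ over the maximal runs using 1-dependence (note the paper's displayed definition has a typo, $\rho(A\cap B)$ should read $\rho(A\cup B)$, and your argument correctly uses the union version) and stationarity, identify the consecutive Hessenberg--Toeplitz minors $D_n$ of $-1/(z\,g(z))$ with the coefficients $a_n$ of $g$ via the common full-history recursion $D_n=\sum_{m=0}^{n-1}k(m)D_{n-1-m}$, and conclude by block upper-triangularity since $k(i)=0$ for $i\leq -2$. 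All the sign bookkeeping checks out (e.g.\ $k(-1)=-1$, $k(0)=a_1$, $D_2=k(0)^2+k(1)=a_2$), so the proposal stands as a complete, self-contained proof of the cited result.
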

\paragraph*{}
\begin{proof}[Proof of Theorem \ref{5}] 
If $x_0=1$, the theorem is obvious since $D(\sigma^\nu_n)=\delta_{\emptyset}$. Next we split the proof into two steps depending on  whether $x_0=0$ or not.
\paragraph*{}
Step 1 : We assume $x_0=0$ so that $\nu(\Sigma_1)=1$.  
Using equalities \eqref{deferg} and \eqref{onem} we obtain:
\begin{align*}
\mathbb{P}(\sigma^\nu_n(1)=1)=\sum_{\sigma\in \mathfrak{S}_n, \sigma(1)=1} \mathbb{P}(\sigma^\nu_n=\sigma)&=\sum_{\sigma\in \mathfrak{S}_n, \sigma(1)=1} \int_{\Sigma_1}f(n,x,\sigma)d\nu(x)\\&= \int_{\Sigma_1}\sum_{\sigma\in \mathfrak{S}_n, \sigma(1)=1}f(n,x,\sigma)d\nu(x)\\&=\int_{\Sigma_1}\mathbb{P}(\sigma^{\delta_x}_n(1)=1)d\nu(x).
\end{align*}
Using Beppo Levi theorem, it is thus enough to prove
\begin{align*}
\mathbb{P}(\sigma^{\delta_x}_n(1)=1)\to 0.
\end{align*}
Using the same combinatorial interpretation as in the beginning of Subsection \ref{appli}, we have for any $x\in \Sigma_1$,
\begin{align*}
\mathbb{P}(\sigma^{\delta_x}_n(1)=1)
=\sum_{i\geq 1}\mathbb{P}(\sigma^{\delta_x}_n(1)=1|pos_1=i)\mathbb{P}(pos_1=i)= \sum_{i\geq 1} x_i(1-x_i)^{n-1}.
\end{align*}
Let $\varepsilon>0$. Since $\sum_i{x_i}=1$, there exists $n_0$ such that $(\sum_{i>n_0}x_i)<\frac{\varepsilon}{2}$ and 
\begin{align*}
\mathbb{P}(\sigma^{\delta_x}_n(1)=1)
= \sum_{i\geq 1} x_i(1-x_i)^{n-1}\leq\sum_{i= 1}^{n_0} x_i(1-x_i)^{n-1} +\frac{\varepsilon}{2}.
\end{align*}
As for all $i\leq n_0$, $x_i(1-x_i)^{n-1}$ converges to $0$ when $n$ goes to infinity, there exists $n_1$ such that for $n>n_1$
$\sum_{i= 1}^{n_0} x_i(1-x_i)^{n-1}<\frac{\varepsilon}{2}$ and therefore
\begin{align*}
\mathbb{P}(\sigma^{\delta_x}_n(1)=1)\to 0.
\end{align*}
Theorem \ref{5} follows from Theorem \ref{thm2} when $x_0=0$.

\paragraph*{} 
Step 2: we now assume that $0<x_0<1$ and  $\nu(\Sigma_{1-x_0})=1$. We have
\begin{align*}
\mathbb{P}(\sigma^\nu_n(1)=1)=x_0+\int_{\Sigma}\sum_{i\geq1} x_i(1-x_i)^{n-1} d\nu(x)\geq x_0>0,
\end{align*}
which prevents the use of Theorem \ref{thm2}. The strategy  is instead to use 
Theorem \ref{bordin}, namely to prove  that the limiting process is stationary, 1-dependent and its correlation function is such that  $\forall k \geq 1$, $$\rho(\{1,2,\dots,k\})=\frac{(1-x_0)^{k+1}}{(k+1)!}+\frac{x_0(1-x_0)^k}{k!}.$$
To do so we need to prove this result in the particular case  $d\nu_1(x):=dPD(1)(\frac{x}{1-x_0})$  
 since for any finite subset B,
\begin{equation}\label{eqfin} \lim_{n\to \infty}\left(\mathbb{P}(B\subset D\left(\sigma^\nu_n)\right)-{\mathbb{P}}(B\subset D(\sigma^{{\nu_1}}_n))\right)=0. \end{equation}
Indeed, let $B$ be a finite subset of $\mathbb{N}^*$ and  $B':=B\cup (B+1)$.
We use the same interpretation of the random virtual permutations in this case as in the proof of Proposition~\ref{them11}.  We choose a random subset $A_n$ of $\{1,2,\dots,n\}$ of fixed points where each point belongs to $A_n$  with probability $x_0$ independently from the others. After that, we permute the elements according to  $\mathbb{P}_{n-|A_n|}$, where $(\mathbb{P}_n)_{n\geq1}$ is the probability distribution on $\mathfrak{S}^\infty$ associated to $\hat\nu$ where $d\hat\nu(x)=d{\nu}(\frac{x}{1-x_0})$. Let $C_n:=A_n\cap B'$ and  $$E_n:=\{\sigma \in \mathfrak{S}_n,\forall i\in B'\setminus{C_n},\, \sigma(i)>\max(B') \}.$$
We have
\begin{equation*}
\mathbb{P}\left(B\subset D\left(\sigma^\nu_n\right)\middle|E_n\right)=\sum_{X \subset B'} \mathbb{P}\left(B\subset D\left(\sigma^\nu_n\right)\middle| E_n,C_n=X\right)\mathbb{P}(C_n=X).
\end{equation*}
With similar arguments as in the proof of Lemma \ref{lemmadesc}, it is not difficult to show that the quantity $\mathbb{P}(B\subset D\left(\sigma^\nu_n\right)| E_n,C_n=X)$  is defined for $n>|B'| +\max(B')$  and does not depend on $\nu$. Moreover, $\mathbb{P}(C_n=X)=x_0^{|X|}(1-x_0)^{|B'|-|X|}$. Thus $\mathbb{P}\left(B\subset D\left(\sigma^\nu_n\right)\middle|E_n\right)$ does not depend on $\nu$.  We have
\begin{align*}
\mathbb{P}(\sigma_n^\nu\in E_n)&=\sum_{X \subset B'} \mathbb{P}(\sigma_n^\nu\in E_n| C_n=X)\mathbb{P}(C_n=X) \\&\geq 
1-\sum_{X \subset B'}\sum_{j \in B'\setminus X}\mathbb{P}(\sigma^\nu_n(j)\leq\max(B')|C_n=X) \mathbb{P}(C_n=X).
\end{align*}
Moreover, using the notation  $p_k:=\mathbb{P}(\sigma_k^{\hat\nu}(1)=1)$ and observing that $p_k\to0$ as $k\to\infty$ thanks to Step 1, we have
\begin{align*}
&\mathbb{P}(\sigma^\nu_n(j)\leq\max(B')|C_n=X) \\=&\sum_{k=0}^{n-|B'|} \mathbb{P}(\sigma^\nu_n(j)\leq\max(B')|C_n=X,|A_n|=|X|+n-|B'|-k)\mathbb{P}(|A_n|=|X|+n-|B'|-k|C_n=X)
\\=&\sum_{k=0}^{n-|B'|} x_0^{n-|B'|-k}(1-x_0)^k\binom{n-|B'|}{k} \mathbb{P}(\sigma^\nu_n(j)\leq\max(B')|C_n=X,|A_n|=|X|+n-|B'|-k)
\\\leq &x_0^{n-|B'|}+x_0^{n{-|B'|}-1}(1-x_0)(n-|B'|)+\sum_{k=2}^{n-|B'|} x_0^{n-|B'|-k}(1-x_0)^k\binom{n-|B'|}{k}\left({p}_{k+|B'|-|X|}+\frac{\max(B')}{|B'|-|X|+k-1}\right) \\&\xrightarrow[n\to \infty]{}0. 
\end{align*}
This yields
\begin{align*}
\lim_{n\to\infty}\mathbb{P}(\sigma_n^\nu\in E_n)=1
\end{align*}
and therefore the claim \eqref{eqfin} is proven.
\paragraph*{} We compute now $$\lim_{n\to \infty}{\mathbb{P}}(B\subset D(\sigma^{{\nu_1}}_n)).$$
The finite subset $B$ can be decomposed as  $B=\bigcup_{i=1}^l B_i$  where each $B_i$ consists in  consecutive elements of $\mathbb{N}^*$ and the distance between $B_i$ and $B_j$ is larger than one if $i\neq j$. 
For example,$$B=\{1,2,3,5,6,8,11,12\}=\{1,2,3\}\cup\{5,6\}\cup\{8\}\cup\{11,12\}.$$
Note that every finite subset has a such decomposition. Let $B'_i:=B_i\cup(B_i+1)$.
We have  $B':=B\cup(B+1)=\bigcup_{i=1}^l B'_i$    and if  $i\neq j$, then $B'_i \cap B'_j=\emptyset$.  
From now we assume that  $n>|B'|+\max(B')$. We have 
\begin{equation}\label{eq21}
{\mathbb{P}}(B\subset D(\sigma^{\nu_1}_n)| E_n)=\sum_{X \subset B'}{\mathbb{P}}(B\subset D(\sigma^{\nu_1}_n)| C_n=X,E_n) {\mathbb{P}}(C_n=X).
\end{equation}
If  $B\cap X\neq\emptyset$, then  
${\mathbb{P}}(B\subset D(\sigma^{\nu_1}_n)| C_n=X,E_n)=0$. Indeed, conditionally on  $E_n$, if $i \in B\cap X$,  then   $\sigma^{\nu_1}_n(i)=i$ and $\sigma^{\nu_1}_n(i+1)$ is either equal to  $i+1$ or larger than $\max(B')$ and in both cases, there is no descent on $i$. Consequently,  \eqref{eq21} becomes
\begin{align*}
{\mathbb{P}}(B\subset D(\sigma^{\nu_1}_n)| E_n)&=\sum_{X \subset B'\setminus B}{\mathbb{P}}(B\subset D(\sigma^{\nu_1}_n)| C_n=X,E_n) {\mathbb{P}}(C_n=X)\\&= \sum_{U \subset \{1,2,\dots,l\}}{\mathbb{P}}\left(B\subset D(\sigma^{\nu_1}_n)\middle| C_n=\bigcup_{i\in U} (B'_i \setminus B_i),E_n\right) {\mathbb{P}}\left(C_n=\bigcup_{i\in U} (B'_i \setminus B_i),E_n\right).
\end{align*}
The second equality comes from the fact that $B'_i\setminus B_i$ contains exactly one element. We denote by $U^c:=\{1,2,\dots,l\}\setminus U$ and by $W(U):=\bigcup(\bigcup_{i\in U}B_i \bigcup_{i \in U^c}B'_i) $. We have
\begin{align*}
{\mathbb{P}}\left(B\subset D(\sigma^{\nu_1}_n)\middle| C_n=\bigcup_{i\in U} (B'_i \setminus B_i),E_n\right)= \frac{ |\mathfrak{E}_2|}{|\mathfrak{E}_1|},
\end{align*}
where 
\begin{align*}
\mathfrak{E}_1&=\left\{(e_k)_{k\in W(U) },\forall k \in W(U),\  \max(B')<e_k\leq n, i\neq j \Rightarrow e_i\neq e_j \right\}
\end{align*}
and 
\begin{align*}
\mathfrak{E}_2&:=\left\{(e_k)_{k\in W(U) } \in \mathfrak{E}_1 ,\forall k \in \bigcup_{i=1}^l B_i\setminus \bigcup_{i\in U}\{\max (B_i)\}, \  e_{k+1}<e_{k} \right\}.
\end{align*}
Therefore,
\begin{align*}
|\mathfrak{E}_1|:=\frac{(n-\max(B'))!}{(n-\max (B)'-|W(U)|)!},
\end{align*}
and
\begin{align*}
|\mathfrak{E}_2| &= \frac{(n-\max(B'))!}{(n-\max(B')-\sum_{i\in U}|B_i|)!\prod_{i\in U} |B_i|! } \frac{(n-\max(B')-\sum_{i\in U}|B_i|)!}{(n-\max(B')-\sum_{i\in U}|B_i|-\sum_{i\in U^c}|B'_i|)!\prod_{i\in U^c} |B'_i|!}
\\ &=\frac{(n-\max(B'))!}{(n-\max(B')-|W(U)|)!\prod_{i\in U} |B_i|!\prod_{i\in U^c} |B'_i|!}.
\end{align*}
As a consequence, 
\begin{align*}
{\mathbb{P}}\left(B\subset D(\sigma^{\nu_1}_n)\middle| C_n=\bigcup_{i\in U} (B'_i \setminus B_i),E_n\right)= \frac{ |\mathfrak{E}_2|}{|\mathfrak{E}_1|}=\frac{1}{\prod_{i\in U} |B_i|!\prod_{i\in U^c} |B'_i|!}.
\end{align*}
Then
\begin{align*}
{\mathbb{P}}(B\subset D(\sigma^{\nu_1}_n)| E_n)= \sum_{U \subset \{1,2,\dots,l\}} 
\frac{ x_0^{|U|}(1-x_0)^{|B|+l-|U|}}{\prod_{i\in U} |B_i|!\prod_{i\in U^c} |B'_i|!}&=\prod_{i=1}^l \frac{(1-x_0)^{|B_i|}}{|B_i|!} \left(x_0+\frac{1-x_0}{|B_i|+1}\right)\\&=\prod_{i=1}^l \hat{a}_{|B_i|}(x_0),
\end{align*}
where we recall that  
\begin{align*}
\hat{a}_k(x_0)=\frac{(1-x_0)^{k+1}}{(k+1)!}+\frac{x_0(1-x_0)^{k}}{k!}.
\end{align*}
This implies that the limiting process is stationary and 1-dependent. Consequently by Theorem \ref{bordin}   it is determinantal and   the kernel satisfies \eqref{keneldescvirt}.
\end{proof}

\paragraph*{} Corollary \ref{gcase} is at the same time a generalization and a direct application of  Theorem \ref{5}.
\begin{proof}[Proof of Corollary \ref{gcase}]
 We denote by $f(n,x,\sigma):= \mathbb{P}\left(\sigma^{\delta_x}_n=\sigma\right)$  (see \eqref{defdef}), by $\rho(n,x,.)$ the correlation function of the descent process of $\sigma^{\delta_x}_n$ and by $\rho_{lim}(x_0,.)$ the correlation function of the determinantal process with kernel ${K_{x_0}(i,j):=k_{x_0}(j-i)}$. Let $A$ be a finite subset of $\mathbb{N}^*$. We have
 \begin{align*}
 \mathbb{P}(A\subset D(\sigma^\nu_n))=\sum_{\sigma \in \mathfrak{S}_n,A\subset D(\sigma)} \mathbb{P}(\sigma^\nu_n=\sigma)&=\sum_{\sigma \in \mathfrak{S}_n,A\subset D(\sigma)} \int_\Sigma f(n,x,\sigma)d\nu(x)\\&= \int_\Sigma \sum_{\sigma \in \mathfrak{S}_n,A\subset D(\sigma)} f(n,x,\sigma)d\nu(x)
\\&=\int_\Sigma \rho(n,x,A)d\nu(x). \end{align*}
Using the convergence of $\rho(n,x,A)$ to $\rho_{lim}(1-\sum_{i\geq 1}x_i,A)$ and the dominated convergence theorem, we obtain:
\begin{align*}
  \mathbb{P}(A\subset D(\sigma^\nu_n)) \xrightarrow[n\to\infty]{} \int_{\Sigma}\rho_{lim}\left(1-\sum_{i\geq 1}x_i,A\right) d\nu(x).
 \end{align*}
 \end{proof}
Using this corollary, we can now proove Proposition \ref{1.1}.
\begin{lemma}\label{finallemma}
For any  random permutation $\sigma_n$ stable under conjugation, $\mathbb{P}(i \in D(\sigma_n))$ does not depend on $i$.
\end{lemma}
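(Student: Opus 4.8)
The plan is to fix $i\in\{1,\dots,n-1\}$ and use the hypothesis \eqref{h1} only for conjugation by the single transposition $\tau=(i,\,i+1)$, which swaps the two \emph{values} $i$ and $i+1$ and fixes every other value. Writing $X:=\sigma_n(i)$ and $Y:=\sigma_n(i+1)$ (two distinct elements of $\{1,\dots,n\}$), note that $i\in D(\sigma_n)$ exactly when $X>Y$. Since $(\tau\sigma_n\tau)(i)=\tau(Y)$ and $(\tau\sigma_n\tau)(i+1)=\tau(X)$, and since $\tau\sigma_n\tau$ has the same law as $\sigma_n$ by \eqref{h1} (here $\tau^{-1}=\tau$), I would first record the identity
\begin{equation*}
\mathbb{P}\big(i\in D(\sigma_n)\big)=\mathbb{P}(X>Y)=\mathbb{P}\big(i\in D(\tau\sigma_n\tau)\big)=\mathbb{P}\big(\tau(X)<\tau(Y)\big).
\end{equation*}

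The crux is the elementary observation that $\tau$ preserves the relative order of any two distinct integers, \emph{except} that it reverses it when those two integers are precisely $i$ and $i+1$. Hence the event $\{\tau(X)<\tau(Y)\}$ coincides with $\{X<Y\}$ on the event $\{X,Y\}\neq\{i,i+1\}$ and with $\{X>Y\}$ on the event $\{X,Y\}=\{i,i+1\}$. Taking probabilities, using $\mathbb{P}(X<Y)=1-\mathbb{P}(X>Y)$ (the values are distinct), and splitting according to whether $\{X,Y\}=\{i,i+1\}$, the displayed identity rearranges to
\begin{equation*}
2\,\mathbb{P}\big(i\in D(\sigma_n)\big)=1-\mathbb{P}\big(\sigma_n(i)=i,\ \sigma_n(i+1)=i+1\big)+\mathbb{P}\big(\sigma_n(i)=i+1,\ \sigma_n(i+1)=i\big),
\end{equation*}
because $\{X<Y\}\cap\{\{X,Y\}=\{i,i+1\}\}$ is exactly the event that $i$ and $i+1$ are both fixed points of $\sigma_n$, whereas $\{X>Y\}\cap\{\{X,Y\}=\{i,i+1\}\}$ is exactly the event that $\{i,i+1\}$ is a $2$-cycle of $\sigma_n$.

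To conclude, I would note that the two probabilities on the right-hand side are invariant under conjugation and involve only the pair $\{i,i+1\}$: applying \eqref{h1} to any permutation carrying $\{i,i+1\}$ onto a fixed reference pair, say $\{1,2\}$, they equal $\mathbb{P}(\sigma_n(1)=1,\sigma_n(2)=2)$ and $\mathbb{P}(\sigma_n(1)=2,\sigma_n(2)=1)$, respectively, neither of which depends on $i$. Hence $\mathbb{P}(i\in D(\sigma_n))$ is the same for every $i\in\{1,\dots,n-1\}$, as claimed. The argument is short; the only point that needs care is the bookkeeping in the order-reversal step, namely keeping straight which of the two ``diagonal'' configurations on $\{i,i+1\}$ (two fixed points vs.\ a transposition) enters with which sign.
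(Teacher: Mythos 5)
Your proof is correct, and it takes a genuinely different route from the paper's. The paper conditions on seven events describing how $(\sigma_n(i),\sigma_n(i+1))$ sits relative to $\{i,i+1\}$, computes each conditional descent probability by conjugation invariance (obtaining $i$-dependent quantities such as $\tfrac{i-1}{n-2}$ and $\tfrac{n-i-1}{n-2}$), and then pairs terms — again via conjugation by $(i,i+1)$ — so that the $i$-dependence cancels in the sum. You instead use a single conjugation by the adjacent transposition $\tau=(i,i+1)$ together with the observation that $\tau$ reverses the relative order of exactly one pair of values, which yields the clean identity
\begin{equation*}
2\,\mathbb{P}\bigl(i\in D(\sigma_n)\bigr)=1-\mathbb{P}\bigl(\sigma_n(i)=i,\ \sigma_n(i+1)=i+1\bigr)+\mathbb{P}\bigl(\sigma_n(i)=i+1,\ \sigma_n(i+1)=i\bigr),
\end{equation*}
whose right-hand side is manifestly independent of $i$ after one more conjugation carrying $\{i,i+1\}$ to $\{1,2\}$. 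I checked the bookkeeping: the displayed identity is consistent with the paper's final expression (both give $\tfrac12$ for the uniform law, $0$ for the identity, etc.), and the two ``diagonal'' configurations enter with the signs you state. Your argument is shorter, avoids computing any conditional distributions, and produces an explicit closed formula for $\mathbb{P}(i\in D(\sigma_n))$ in terms of only two probabilities; the paper's longer computation is of the same flavour as its Lemma on the conditional law given $E_n$ and also yields an explicit value, but requires the extra cancellation step to see that the answer is $i$-free.
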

\begin{proof}
Let $1\leq i<n$. We have 
\begin{align*}
\mathbb{P}(i \in D(\sigma_n))&=
\mathbb{P}(i \in D(\sigma_n)|\sigma_n(i)=i,\sigma_n(i+1)=i+1)\mathbb{P}(\sigma_n(i)=i,\sigma_n(i+1)=i+1)
\\&+\mathbb{P}(i \in D(\sigma_n)|\sigma_n(i)=i,\sigma_n(i+1)\neq i+1)\mathbb{P}(\sigma_n(i)=i,\sigma_n(i+1)\neq i+1)
\\&+
\mathbb{P}(i \in D(\sigma_n)|\sigma_n(i)\neq i,\sigma_n(i+1)=i+1)\mathbb{P}(\sigma_n(i)\neq i,\sigma_n(i+1)= i+1)
\\&+\mathbb{P}(i \in D(\sigma_n)|\sigma_n(i)\notin \{i,i+1\},\sigma_n(i+1)\notin \{i,i+1\})\mathbb{P}(\sigma_n(i)\notin \{i,i+1\},\sigma_n(i+1)\notin \{i,i+1\})
\\&+\mathbb{P}(i \in D(\sigma_n)| \sigma_n(i)=i+1,\sigma_n(i+1)\notin \{i,i+1\})\mathbb{P}( \sigma_n(i)=i+1,\sigma_n(i+1)\notin \{i,i+1\})
\\&+\mathbb{P}(i \in D(\sigma_n)|\sigma_n(i)\notin \{i,i+1\},\sigma_n(i+1)=i)\mathbb{P}(\sigma_n(i)\notin \{i,i+1\},\sigma_n(i+1)=i)
\\&+\mathbb{P}(i \in D(\sigma_n)|\sigma_n(i)=i+1,\sigma_n(i+1)=i)\mathbb{P}(\sigma_n(i)=i+1,\sigma_n(i+1)=i)
.
\end{align*}
Using the stability under conjugation, we obtain,
\begin{align*}
\mathbb{P}(i\in D(\sigma_n)|\sigma_n(i)=i,\sigma_n(i+1)=i+1)&=0
\\ \mathbb{P}(i \in D(\sigma_n)|\sigma_n(i)=i,\sigma_n(i+1)\neq i+1)&=\frac{i-1}{n-2}
\\\mathbb{P}(i \in D(\sigma_n)|\sigma_n(i)\neq i,\sigma_n(i+1)=i+1)&=\frac{n-i-1}{n-2}
\\\mathbb{P}(i \in D(\sigma_n)|\sigma_n(i)\notin \{i,i+1\},\sigma_n(i+1)\notin \{i,i+1\})&=\frac{1}{2}
\\\mathbb{P}(i \in D(\sigma_n)| \sigma_n(i)=i+1,\sigma_n(i+1)\notin \{i,i+1\})&=\frac{i-1}{n-2}
\\ \mathbb{P}(i \in D(\sigma_n)|\sigma_n(i)\notin \{i,i+1\},\sigma_n(i+1)=i)&=\frac{n-i-1}{n-2}
\\\mathbb{P}(i\in D(\sigma_n)|\sigma_n(i)=i+1,\sigma_n(i+1)=i)&=1.
\end{align*}
We have then, using again the stability under conjugation,
\begin{align*}
\mathbb{P}(i \in D(\sigma_n))&=\mathbb{P}(\sigma_n(1)=1,\sigma_n(2)\neq 2)\\&+\mathbb{P}(\sigma_n(1)=2,\sigma_n(2)=1)\\&+\mathbb{P}(\sigma_n(1)\notin \{1,2\},\sigma_n(2)=1)\\&+\frac{1}{2}\mathbb{P}(\sigma_n(1)\notin \{1,2\},\sigma_n(2)\notin \{1,2\})
\end{align*}
and the lemma follows.
\end{proof}
\begin{proof}[Proof of Proposition \ref{1.1}]
Let $\nu$ be a probability measure on $\Sigma$. By Lemma \ref {finallemma} and by using \eqref{eqq7} and \eqref{limvir} for $A=\{1\}$, we obtain
\begin{align*}
\frac{\mathbb{E}(|D(\sigma^\nu_n)|)}{n}=\frac{n-1}{n}\mathbb{P}\left(1 \in D(\sigma^\nu_n)\right)\to\int_\Sigma \hat{a}_1\left(1-\sum_{i\geq1}{x_i}\right)d\nu(x)=\frac{1}{2}\left(1-\int_\Sigma \left(1-\sum_i{x_i}\right)^2 d\nu(x)\right).
\end{align*}
\end{proof}

\end{document}